\title{Some remarks on Campanato's Theorem and the Anisotropic Bessel Spaces}
\author{\it by \smallskip \\
H. Hajaiej \quad $\&$ \quad R. Leit\~ao
    \footnote{ \noindent \textsc{H. Hajaiej}.
\texttt{E-mail address: hhajaie@calstatela.edu}
    }
\footnote{
\noindent \textsc{R. Leit\~ao}.
Universidade Federal do Cear\'{a} - UFC. Department of Mathematics. Fortaleza - CE, Brazil - 60455-760.
\texttt{E-mail address: rleitao@mat.ufc.br}
}
                                                                                           }
\newlength{\hchng}
\newlength{\vchng}
\def \div {\mathrm{div}}
\def\Xint#1{\mathchoice
{\XXint\displaystyle\textstyle{#1}}
{\XXint\textstyle\scriptstyle{#1}}
{\XXint\scriptstyle\scriptscriptstyle{#1}}
{\XXint\scriptscriptstyle\scriptscriptstyle{#1}}
\!\int}
\def\XXint#1#2#3{{\setbox0=\hbox{$#1{#2#3}{\int}$ }
\vcenter{\hbox{$#2#3$ }}\kern-.6\wd0}}
\def\dashint{\Xint-}
\newtheorem{theorem}{Theorem}[section]
\newtheorem{lemma}[theorem]{Lemma}
\newtheorem{proposition}[theorem]{Proposition}
\newtheorem{corollary}[theorem]{Corollary}
\theoremstyle{definition}
\newtheorem{definition}[theorem]{Definition}
\theoremstyle{remark}
\newtheorem{remark}[theorem]{Remark}
\numberwithin{equation}{section}
\newcommand{\intav}[1]{\mathchoice {\mathop{\vrule width 6pt height 3 pt depth  -2.5pt
\kern -8pt \intop}\nolimits_{\kern -6pt#1}} {\mathop{\vrule width
5pt height 3  pt depth -2.6pt \kern -6pt \intop}\nolimits_{#1}}
{\mathop{\vrule width 5pt height 3 pt depth -2.6pt \kern -6pt
\intop}\nolimits_{#1}} {\mathop{\vrule width 5pt height 3 pt depth
-2.6pt \kern -6pt \intop}\nolimits_{#1}}}
\begin{document}
\maketitle

\begin{abstract}

In this paper, we establish an anisotropic version of Campanato's Theorem and show that the anisotropic Bessel spaces are continuously embedded in the function spaces $C^{\theta}_{\Vert \cdot \Vert}$-continuous. As an application of this embedding, we build fundamental solutions for a class of anisotropic fractional laplacians $\Delta^{\beta, \alpha}$.
\bigskip

\noindent{\sc Key words}: degenerate elliptic equations; fractional Laplacian; anisotropy.

 \noindent{\sc AMS Subject Classification MSC 2010}: 26A33; 35J70; 47G20.

\tableofcontents

\end{abstract}

\section{Introduction}

The goal of this paper is the study of a type of anisotropic fractional operators not included in previous works. In dimension 2, a family of anisotropic fractional laplacians was studied in several papers, \cite{Hajaiej-EP}, \cite{Hajaiej-SG} and \cite{Hajaiej-KMS}, and reference therein.  In this article, we will focus on the functional setting for this another class of anisotropic fractional laplacians, which constitutes an essential step in addressing PDEs involving these operators.\\

Given $\alpha > 0$, $\beta=\left( b_{1}, \dots, b_{n} \right)$, $b_{i} > 0$, and $1 < q < +\infty,$ we consider the anisotropic spaces $W_{\beta}^{\alpha, q}(\mathbb{R}^{n})$ and $H_{r_{\beta}}^{\alpha, q}(\mathbb{R}^{n})$ as follows

\begin{equation}
W_{\beta}^{\alpha, q}(\mathbb{R}^{n}) := \left  \lbrace u \in L^{q}(\mathbb{R}^{n}):   \displaystyle \dfrac{\vert u(z) - u(h) \vert^{q}}{\Vert z - h \Vert_{\beta}^{\frac{c_{\beta}}{q} + \alpha }} \in L^{q}\left( \mathbb{R}^{n} \times \mathbb{R}^{n} \right) \right \rbrace
\end{equation}
and
\begin{equation}
H_{r_{\beta}}^{\alpha, q}(\mathbb{R}^{n}) := \left  \lbrace u \in L^{q}(\mathbb{R}^{n}): \mathfrak{F}^{-1} \left[\left( 1 + r^{2}_{\beta}(z) \right)^{\alpha/2}  \mathfrak{F}(u) \right] \in L^{q}(\mathbb{R}^{n}) \right \rbrace,
\end{equation}
where $ \mathfrak{F}(f)$ denotes the Fourier transform of $f$, $c_{\beta} = \displaystyle \sum_{i=1}^{n} \frac{2}{b_{i}}$ is the anisotropic dimension,
$$
\Vert x - y \Vert_{\beta} = \left( \sum_{i=1}^{n} \vert x_{i} - y_{i} \vert^{b_{i}} \right)^{1/2}
$$
is the anisotropic quasi-distance, see \cite{CLU, LEITAO, SL}, and $r_{\beta}$ is the anisotropic distance that satisfies the Calder\' on-Besov-Lizorkin condition, namely,
\begin{eqnarray}
\label{CBL condition}
\sum_{i=1}^{n}\dfrac{x_{i}^{2}}{r_{\beta}^{\frac{4}{b^{\ast}_{i}}}} = 1, \quad \beta^{\ast} = \dfrac{c_{\beta} }{n}\beta, \quad \text{and} \quad c_{\beta^{\ast}} = n,
\end{eqnarray}
see \cite{BESOV-LIZORKIN}. The anisotropic Gagliardo space $W_{\beta}^{\alpha, q}(\mathbb{R}^{n})$ and the anisotropic Bessel space $H_{r_{\beta}}^{\alpha, q}(\mathbb{R}^{n})$ are endowed respectively with the natural norms

\begin{equation}
\Vert u \Vert_{W_{\beta}^{\alpha, q}(\mathbb{R}^{n})} := \left( \int_{\mathbb{R}^{n}} \vert u (z) \vert^{q} dz + \int_{\mathbb{R}^{n}} \int_{\mathbb{R}^{n}}  \dfrac{\vert u(z) - u(h) \vert^{q}}{\Vert z - h \Vert_{\beta}^{c_{\beta} + \alpha q}} dz \, dh \right)^{1/q}
\end{equation}
and
\begin{equation}
\Vert u \Vert_{H_{r_{\beta}}^{\alpha, q}(\mathbb{R}^{n})} := \left  \Vert \mathfrak{F}^{-1} \left[\left( 1 + r^{2}_{\beta}(z) \right)^{\alpha/2}  \mathfrak{F}(u) \right] \right \Vert_{q}.
\end{equation}

In the study of PDEs, it is fundamental to know the regularity of the solutions, see for example \cite{S, CS, CLU, ROS-OTON-SERRA 1, ROS-OTON-SERRA 2, LTY, CTU, SL, CKP}. This is where the continuous embedding in H\"{o}lder continuous spaces is crucial. In the isotropic case $b_{i} = 2$ and $c_{\beta} = n,$ the relation
\begin{eqnarray}
\label{isotrop relation}
W_{\beta}^{\alpha, q}(\mathbb{R}^{n}) = H_{r_{\beta}}^{\alpha, q}(\mathbb{R}^{n}), \quad 0 < \alpha < 1,
\end{eqnarray}
is well-known in the literature, see for example Theorem 7.63 in \cite{ADAMS}, Theorem 5 in \cite{STEIN}, and Theorem 27.3 in \cite{SAMKO}. An interesting and useful consequence of \eqref{isotrop relation} and the classical Campanato's Theorem, see Theorem 2.9 in \cite{E. GIUSTI} and Theorem 8.2 in \cite{NPV}, is the continuous embedding
\begin{eqnarray}
H_{r_{\beta}}^{\alpha, q}(\mathbb{R}^{n}) \hookrightarrow C^{\theta}_{\Vert \cdot \Vert_{\beta}}, \quad 0< \theta < 1,
\end{eqnarray}
where $C^{\theta}_{\Vert \cdot \Vert_{\beta}}$ is the set of functions $\theta$-H\" older continuous in the quasi-distance $\Vert \cdot \Vert_{\beta}$, $b_{i} = 2$, endowed with the norm
\begin{eqnarray}
\Vert u \Vert_{C^{\theta}_{\Vert \cdot \Vert_{\beta}}(\mathbb{R}^{n})} := \Vert f \Vert_{L^{\infty}(\mathbb{R}^{n})} + \displaystyle\sup_{x, y \in \mathbb{R}^{n}} \dfrac{\vert u(x) - u(y)\vert}{\Vert x - y \Vert_{\beta}^{\theta}}.
\end{eqnarray}
Here the term continuous embedding means that there exists a constant $C >0$ independent of $u$ such that
\begin{eqnarray}
\label{embedding}
\Vert u \Vert_{C^{\theta}_{\Vert \cdot \Vert_{\beta}}(\mathbb{R}^{n})} \leq C \Vert u \Vert_{H_{r_{\beta}}^{\alpha, q}(\mathbb{R}^{n})}.
\end{eqnarray}
In this paper, we establish an anisotropic version of \eqref{embedding} under the condition $c_{\beta} = n$. Our strategy is to first prove the classical Campanato's Theorem for the anisotropic case and then link that to the theory developed by Lizorkin in \cite{LIZORKIN 1, LIZORKIN 2, LIZORKIN 3} and the results obtained by the authors in \cite{LEITAO LP}. We shall prove the following:
\begin{theorem}
\label{main theorem}
Let $\beta = (b_{1}, \dots, b_{n}),$ and $b_{\max}=\displaystyle\max_{1\leq i\leq n}b_i.$ If $0 < \alpha < 2/b_{\max}$ and $\alpha q > c_{\beta}$, then $W_{\beta}^{\alpha, q}(\mathbb{R}^{n})$ is continuously embedded in $C^{\theta}_{\Vert \cdot \Vert_{\beta}}\left( \mathbb{R}^{n}\right)$ for
\begin{eqnarray}
\label{holder reg}
\theta = \alpha - \dfrac{c_{\beta}}{q}.
\end{eqnarray}
In particular, if $c_{\beta}=n$, $H_{r_{\beta}}^{\alpha, q}(\mathbb{R}^{n})$ is continuously embedded in $C^{\theta}_{\Vert \cdot \Vert_{\beta}}\left( \mathbb{R}^{n}\right)$.
\end{theorem}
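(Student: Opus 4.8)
The plan is to prove the Campanato-type embedding $W_{\beta}^{\alpha,q}(\mathbb{R}^n) \hookrightarrow C^{\theta}_{\Vert\cdot\Vert_\beta}(\mathbb{R}^n)$ with $\theta = \alpha - c_\beta/q$ by a direct Morrey--Campanato argument adapted to the anisotropic quasi-metric balls $B_r^\beta(x) = \{y : \Vert y - x\Vert_\beta < r\}$, and then deduce the statement for $H_{r_\beta}^{\alpha,q}$ when $c_\beta = n$ by combining with the identification $W_\beta^{\alpha,q} = H_{r_\beta}^{\alpha,q}$ from Lizorkin's theory (as referenced via \cite{LIZORKIN 1, LIZORKIN 2, LIZORKIN 3, LEITAO LP}). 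First I would record the basic geometry of these balls: $|B_r^\beta(x)| = \omega_\beta\, r^{c_\beta}$ for a dimensional constant $\omega_\beta$ (since $\Vert\cdot\Vert_\beta$ is homogeneous of degree $1$ under the anisotropic dilations $\delta_\lambda x = (\lambda^{2/b_1}x_1,\dots,\lambda^{2/b_n}x_n)$ and $c_\beta = \sum 2/b_i$ is exactly the homogeneous dimension), and note the quasi-triangle inequality $\Vert x - z\Vert_\beta \le C_\beta(\Vert x-y\Vert_\beta + \Vert y - z\Vert_\beta)$, which holds with $C_\beta = 2^{(b_{\max}-1)/2}$ or similar. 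These are the only structural facts needed; the rest is the classical chain of estimates with $n$ replaced by $c_\beta$ and Euclidean balls replaced by $\Vert\cdot\Vert_\beta$-balls.

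\medskip

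The core estimate is to show that the anisotropic Gagliardo seminorm controls a Campanato seminorm: for $u \in W_\beta^{\alpha,q}$, setting $u_{x,r} = \dashint_{B_r^\beta(x)} u$, one has
\begin{equation}
\label{campanato bound}
\int_{B_r^\beta(x)} |u(y) - u_{x,r}|^q\, dy \;\le\; C\, r^{c_\beta + \alpha q - c_\beta}\,[u]_{W_\beta^{\alpha,q}}^q \;=\; C\, r^{\alpha q}\,[u]_{W_\beta^{\alpha,q}}^q .
\end{equation}
This follows from Jensen's inequality, $|u(y) - u_{x,r}|^q \le \dashint_{B_r^\beta(x)} |u(y) - u(h)|^q\, dh$, integrating in $y$, and bounding $\Vert y - h\Vert_\beta \le C_\beta\, r$ for $y,h \in B_r^\beta(x)$ so that $\Vert y-h\Vert_\beta^{-(c_\beta + \alpha q)} \ge (C_\beta r)^{-(c_\beta+\alpha q)}$ in the denominator — wait, that is the wrong direction, so instead one keeps the true denominator and just extends the domain of integration: $\int_{B_r^\beta(x)}\int_{B_r^\beta(x)} \frac{|u(y)-u(h)|^q}{\Vert y-h\Vert_\beta^{c_\beta+\alpha q}}\, dh\, dy \le [u]_{W_\beta^{\alpha,q}}^q$, and the volume factor $r^{c_\beta}$ from averaging combines with $r^{-c_\beta}\cdot r^{c_\beta+\alpha q}$ after estimating $\Vert y - h\Vert_\beta^{c_\beta + \alpha q} \le (C_\beta r)^{c_\beta+\alpha q}$ in the numerator's denominator. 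Concretely: $\int_{B_r}|u(y)-u_{x,r}|^q dy \le \frac{1}{|B_r|}\int_{B_r}\int_{B_r}|u(y)-u(h)|^q\,dh\,dy \le \frac{(C_\beta r)^{c_\beta+\alpha q}}{\omega_\beta r^{c_\beta}}\int_{B_r}\int_{B_r}\frac{|u(y)-u(h)|^q}{\Vert y-h\Vert_\beta^{c_\beta+\alpha q}}\,dh\,dy$, giving \eqref{campanato bound}.

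\medskip

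From \eqref{campanato bound} the classical Campanato argument runs unchanged. I would estimate the oscillation of averages across dyadic anisotropic balls: for $x$ fixed and $r_j = 2^{-j}r$,
\begin{equation}
|u_{x,r_{j+1}} - u_{x,r_j}|^q \;\le\; \frac{C}{|B_{r_{j+1}}^\beta|}\int_{B_{r_j}^\beta(x)}|u(y) - u_{x,r_j}|^q\, dy \;\le\; C\, r_j^{-c_\beta}\, r_j^{\alpha q}\,[u]^q \;=\; C\, r_j^{(\alpha - c_\beta/q)q}\,[u]^q,
\end{equation}
so since $\theta = \alpha - c_\beta/q > 0$ (this is exactly the hypothesis $\alpha q > c_\beta$), the series $\sum_j |u_{x,r_{j+1}} - u_{x,r_j}|$ converges geometrically and $u_{x,r}\to \tilde u(x)$ as $r\to 0$ for every Lebesgue point; one then defines the continuous representative $\tilde u$. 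Summing the geometric series gives $|\tilde u(x) - u_{x,r}| \le C\, r^\theta\,[u]$. To get the Hölder estimate $|\tilde u(x) - \tilde u(y)| \le C\,\Vert x-y\Vert_\beta^\theta\,[u]$, set $\rho = \Vert x - y\Vert_\beta$ and compare $u_{x,C_\beta'\rho}$ with $u_{y,C_\beta'\rho}$ via a common ball of radius $\sim \rho$ (using the quasi-triangle inequality to ensure $B_\rho^\beta(x), B_\rho^\beta(y) \subset B_{C_\beta'\rho}^\beta(x)$), estimating $|u_{x,C_\beta'\rho} - u_{y,C_\beta'\rho}|$ through the $L^q$ oscillation bound on that common ball; the three terms $|\tilde u(x) - u_{x,C_\beta'\rho}|$, $|u_{x,C_\beta'\rho} - u_{y,C_\beta'\rho}|$, $|u_{y,C_\beta'\rho} - \tilde u(y)|$ are each $O(\rho^\theta [u])$. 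Finally the $L^\infty$ bound $\Vert \tilde u\Vert_{L^\infty} \le C\Vert u\Vert_{W_\beta^{\alpha,q}}$ comes from $|\tilde u(x)| \le |u_{x,1}| + |\tilde u(x) - u_{x,1}| \le \omega_\beta^{-1/q}\Vert u\Vert_{L^q} + C[u]$. The restriction $\alpha < 2/b_{\max}$ ensures $\alpha < 1$ in each variable's scale so that the Gagliardo seminorm is the right object and the embedding $W_\beta = H_{r_\beta}$ is available; I do not expect it to enter the Campanato estimate itself but rather to be needed for the identification step in the last sentence, where I would simply cite Lizorkin's characterization. The main obstacle, and the only place requiring genuine care, is the bookkeeping of the quasi-metric constant $C_\beta$ through the iteration — since $\Vert\cdot\Vert_\beta$ is not a genuine metric, one must verify that the doubling property $|B_{2r}^\beta| = 2^{c_\beta}|B_r^\beta|$ and the quasi-triangle inequality suffice (they do, this is the standard space-of-homogeneous-type Campanato theorem of Macías--Segovia type), and that no step secretly uses the triangle inequality with constant $1$; I would isolate the needed geometric lemmas at the start so the iteration is clean.
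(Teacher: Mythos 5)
Your proposal runs essentially the same Campanato iteration that the paper uses: bound the anisotropic Campanato seminorm by the Gagliardo seminorm (this is assertion 5 of the paper's Lemma \ref{elemntary results}), show that averages over anisotropic balls decay geometrically so that they converge to a continuous representative $\tilde u$ (the paper justifies $u=\tilde u$ a.e.\ by invoking the Caffarelli--Calder\'on anisotropic covering lemma for the Lebesgue differentiation theorem, whereas you appeal to the Mac\'{\i}as--Segovia space-of-homogeneous-type framework — the same structural fact, a different citation), and get the H\"older estimate from the three-averages triangle inequality, with the Bessel-space conclusion coming from Lizorkin's identification via Lemma \ref{Main. Lemma}. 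Apart from your choice of quasi-balls $\Theta_r^\beta$ where the paper works with the comparable ellipsoids $E_r^\beta$, and a correct observation that $\alpha < 2/b_{\max}$ is needed only for the identification step and not for the Campanato estimate itself, this is the paper's argument.
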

In order to prove the first part of Theorem \ref{main theorem}, we adapt Campanato's approach to the geometry driven by the quasi-distance $\Vert \cdot \Vert_{\beta}$. Indeed, we show that $u \in W_{\beta}^{\alpha, q}(\mathbb{R}^{n})$ satisfies a \textit{decay of oscillation of its average in anisotropic balls}. Another essential step towards the $C_{\Vert \cdot \Vert_{\beta}}^{\theta}$-regularity for $u$ is \textit{Lebesgue differentiation theorem for anisotropic balls}. In the sequel, we comment with details on the strategies to reach our goals:

\begin{enumerate}
\item (Decay of oscillation of the average of $u$ in anisotropic balls) As in the seminal Campanato's technique, we show that if $c_{\beta} < \alpha \leq c_{\beta} + q$ and $0 < r < R$, there exists a constant $C=C(n,\beta, \alpha) > 0$ such that
\begin{eqnarray}
\label{decay of average}
\left | u_{x_{0},\sigma} - u_{x_{0},\rho} \right | \leq C \left[ u \right]_{q, \alpha q}\left( \dfrac{\sigma}{\rho}\right)^{\alpha}  \rho^{\theta}, \quad \text{for} \ 0 < r \leq \rho < \sigma \leq R,
\end{eqnarray}
where $u_{x_{0},r}:= \dfrac{1}{\left | E_{r}(x_{0}) \right |}\displaystyle \int_{E_{r}(x_{0})}u(x)\, dx$ is the average of $u$ in the ellipsoid $E_{r}(x_{0})$ centered at $x_{0}$ and with the $i$-th edge equal to $r^{2/b_{i}}$, see \cite{BESOV-LIZORKIN, CLU, SL, LEITAO}, and
\begin{eqnarray}
\left[ u \right]^{q}_{q, \alpha q}:= \sup_{x_{0}\in \mathbb{R}^{n}, r>0} r^{-\alpha q}\displaystyle \int_{E_{r}(x_{0})}\vert u(x) - u_{x_{0},r} \vert^{q} \, dx
\end{eqnarray}
is the anisotropic Campanato's semi-norm.
\item (Lebesgue differentiation theorem for anisotropic balls) Here we invoke an anisotropic covering lemma proved by Caffarelli-Calder\' on, see Lemma 3 in \cite{CC} or Lemma 3.6 in \cite{CLU}, to get
\begin{eqnarray}
\displaystyle \lim_{r \rightarrow 0} u_{x_{0}, r} = u(x_{0}) \quad \text{for almost every}\ x_{0} \in \mathbb{R}^{n}.
\end{eqnarray}
With 1 and 2 in mind, we prove the $C^{\theta}_{\Vert \cdot \Vert_{\beta}}$-regularity for $u$ as in the classical case $b_{i}=2$.
\end{enumerate}

The second part of Theorem \ref{main theorem} is a strong result as it connects the three anisotropic spaces $H_{r_{\beta}}^{\alpha, q}(\mathbb{R}^{n}),$ $B^{\beta}_{q}(\mathbb{R}^{n}),$ and $W_{\beta}^{\alpha, q}(\mathbb{R}^{n}) .$ The latter were studied separately. More precisely:
\begin{enumerate}
\item The space $H_{r_{\beta}}^{\alpha, q}(\mathbb{R}^{n})$ was studied by Lizorkin in \cite{LIZORKIN 1, LIZORKIN 2}.
\item The anisotropic Besov space $B^{\beta}_{q}(\mathbb{R}^{n})$ which consists of all functions $f$ in $L^{q}(\mathbb{R}^{n})$ such that
\begin{eqnarray*}
\int_{0}^{\infty} \int_{\mathbb{R}^{n}} \dfrac{\vert u (y_{1}, \dots, y_{i} + t, \dots, y_{n}) -  u (y_{1}, \dots, y_{i}, \dots, y_{n}) \vert^{q}}{t^{1 + qb_{i}}} \, dy \, dt < + \infty, \quad \text{for} \ i=1, \dots, n,
\end{eqnarray*}
see \cite{BESOV-I-N, LONG-TRIEBEL, SCHMEISSER-TRIEBEL}.
\item The space $W_{\beta}^{\alpha, q}(\mathbb{R}^{n}) $ was addressed by the authors in \cite{LEITAO LP}, see also \cite{LONG-TRIEBEL}.
\end{enumerate}
This connection is established as follows, in Lemma \ref{Main. Lemma} in Section 2. In \cite{LIZORKIN 2}, Lizorkin demonstrated, via hypersingular integrals, the anisotropic relation
\begin{eqnarray}
\label{LIZORKIN}
L^{\beta}_{q}(\mathbb{R}^{n}) = H_{r_{\beta^{}}}^{\frac{2n}{c_{\beta}}, q}(\mathbb{R}^{n}),
\end{eqnarray}
where $L^{\beta}_{q}(\mathbb{R}^{n})$ is the anisotropic fractional Sobolev space defined by $f$ in $L^{q}(\mathbb{R}^{n})$ such that
\begin{eqnarray}
 \mathfrak{F}^{-1}\left[ \left( 1 + x^{2}_{i}\right)^{b_{i}/2}\mathfrak{F}(f)\right] \in L^{q}(\mathbb{R}^{n}).
\end{eqnarray}
On the other hand, the second author showed that, see Lemma 2.8 in \cite{LEITAO LP},
$$
W_{\beta}^{\alpha, q}(\mathbb{R}^{n}) = B^{\beta(\alpha)}_{q}(\mathbb{R}^{n}), \quad \text{if} \ \beta(\alpha) = (\alpha b_{1} /2 ,\dots,  \alpha b_{n} /2) \ \text{and} \ 0 < \alpha < \dfrac{2}{b_{\max}},
$$
see also Theorem 1 in \cite{LONG-TRIEBEL}. Therefore, \eqref{embedding} is a consequence of the continuous embedding $$L^{\beta}_{q}(\mathbb{R}^{n}) \hookrightarrow B^{\beta}_{q}(\mathbb{R}^{n})$$
obtained by Lizorkin in \cite{LIZORKIN 3}. It is important to mention that for $q=2$ and $c_{\beta} = n$ Lizorkin proved that
\begin{eqnarray}
\label{Lizorkin relation}
H_{r_{\beta^{}}}^{\alpha, q}(\mathbb{R}^{n}) = L^{\beta(\alpha)}_{q}(\mathbb{R}^{n}) = B^{\beta(\alpha)}_{q}(\mathbb{R}^{n}) = W_{\beta}^{\alpha, q}(\mathbb{R}^{n}) \quad \text{for} \ 0 < \alpha < \dfrac{2}{b_{\max}}.
\end{eqnarray}
Moreover, by Proposition 2.11 in \cite{LEITAO LP}, the anisotropic Bessel spaces $H^{\alpha, 2}_{\beta}\left ( \mathbb{R}^{n} \right)$ studied in \cite{LEITAO LP} satisfy
\begin{eqnarray}
\label{Leitao relation}
W_{\beta}^{\alpha, 2}(\mathbb{R}^{n}) = H^{\alpha, 2}_{\beta} \left( \mathbb{R}^{n} \right), \quad 0 < \alpha < \dfrac{2}{b_{\max}},
\end{eqnarray}
for any $c_{\beta} > 0$ (including the case $c_{\beta} \neq n$). The equalities \eqref{Lizorkin relation} and \eqref{Leitao relation} indicate that the condition $c_{\beta} = n$ may be removed from the anisotropic embedding \eqref{embedding} if we consider the spaces $H^{\alpha, q}_{\beta}\left ( \mathbb{R}^{n} \right)$. We believe that a refinement of Lizorkin's approach may shed some light on this issue. We plan to treat this problem in a forthcoming paper. Another relevant remark is that $\Vert \cdot \Vert_{\beta}$ and $r_{\beta}$ are comparable and determine the same scaling, that is,
\begin{eqnarray}
\label{relation between Bessel Lizorkin and Bessel Leitao}
C^{-1}\Vert x \Vert_{\beta} \leq r_{\beta}(x) \leq C \Vert x \Vert_{\beta} \quad \text{and} \text \quad  r_{\beta}(T_{\beta, \kappa}x) = \kappa \, r_{\beta}(x),\  \Vert T_{\beta, \kappa}x \Vert_{\beta} = \kappa \, \Vert x \Vert_{\beta},
\end{eqnarray}
for all $x \in \mathbb{R}^{n}$, where $C>0$ is a constant depending on $n$ and $\beta$, $T_{\beta, \kappa}e_{i} = \kappa^{2/b_{i}} e_{i}$ is a linear map and $e_{i}$ is the i-th canonical vector. Hence, from \eqref{relation between Bessel Lizorkin and Bessel Leitao} and Fourier multipliers, see Remark 2.2.12 (ii) and Theorem 2.3.2 in \cite{BESSELFARKAS}, we infer that
\begin{eqnarray}
\label{equality Bessel Lizorkin and Bessel Leitao}
H_{r_{\beta^{}}}^{\alpha, q}\left( \mathbb{R}^{n} \right) = H^{\alpha, q}_{\beta}\left ( \mathbb{R}^{n} \right), \quad \text{if} \ c_{\beta} = n.
\end{eqnarray}
As an application of Theorem \ref{main theorem}, we use relation \eqref{equality Bessel Lizorkin and Bessel Leitao} to prove the existence of a fundamental solution at origin to the equation
\begin{eqnarray}
\label{anisotropic harmonic equation}
\Delta^{\beta, \alpha} u = 0, \quad \text{in} \ \mathbb{R}^{n} \setminus \left\lbrace 0 \right\rbrace,
\end{eqnarray}
where $\Delta^{\beta, \alpha}$ is the anisotropic fractional laplacian given by
$$
\Delta^{\beta, \alpha}u (x) = C_{\beta, \alpha} \int_{\mathbb{R}^{n}} \dfrac{u(x) - u(y)}{\Vert x - y \Vert_{\beta}^{c_{\beta}+2\alpha}} dy, \quad c_{\beta} = n, \quad C_{\beta, \alpha} = \dfrac{2}{b_{\max}} - \alpha,
$$
if $u$ is sufficiently smooth. More specifically, we show the existence of a viscosity solution of \eqref{anisotropic harmonic equation} which goes to infinity at the origin and is locally bounded in $\mathbb{R}^{n} \setminus \left \lbrace 0 \right \rbrace$ for $\alpha$ near $2/b_{\max}$. The study of the existence of fundamental solutions has an extensive literature:
\begin{enumerate}
\item \textbf{Local}: In an original work \cite{BOCHER}, B\^ocher in 1903 found the fundamental solutions for the Laplacian $\Delta$. In \cite{GSer}, Gilbarg and Serrin, via more modern theories, addressed the case quasi-linear. A few years later, in the series of papers \cite{Ser 1, Ser 2, Ser 3}, Serrin completed the study of general case $\text{div} \, \mathcal{A}(x,u, Du) = \mathcal{B}(x, u, Du)$, where the $p$-Laplacian $\Delta_{p}u= \div \, \left( \vert D u \vert^{p-2} D u \right)$, $p>0$, was the prototype operator. For fully nonlinear operators we refer Labutin's works \cite{LABUTIN 1, LABUTIN 2} in which a fundamental solution was reached for Pucci operators. For more general fully nonlinear equations, for instance Isaacs operator, Armstrong, Sirakov, and Smart in \cite{ASS 1, ASS 2} proved the existence of fundamental solutions and, as a consequence, they obtained Liouville-type results.
\item \textbf{Non-Local}: In \cite{FQ 1, FQ 2}, Felmer and Quaas treated of the existence of fundamental solutions for a class of non-local Issacs operators governed by a symmetric kernel. A Liouville-type theorem was also obtained. Inspired by the ideas presented in \cite{ASS 1, ASS 2}, Cao, Wu, and Wang in \cite{CWW} showed the existence of fundamental solutions for integro-differential operators like a fractional Laplacian. Recently in \cite{NPQUAAS}, Nornberg, Prazeres and Quaas study fundamental solutions for extremal fully nonlinear integral operators in conical domains.
\end{enumerate}
With the aim of finding anisotropic fundamental solutions of \eqref{anisotropic harmonic equation} we follow the same line of attack as in \cite{ASS 1, ASS 2, CWW} which is inspired by Perron's Method and comparison principle. In the isotropic case, see \cite{CWW}, the key step is to build, via a suitable maximum principle, a positive and $-\gamma$-homogeneous viscosity solution to the non-local Poisson equation
\begin{eqnarray}
\label{Poisson equation introduction}
\Delta^{\beta_{0},\alpha}u_{\gamma}(x) = \dfrac{1}{\Vert  x \Vert_{\beta_{0}}^{\gamma + 2\alpha}}, \quad \text{for all} \ x\in \mathbb{R}^{n} \setminus \left\lbrace 0 \right\rbrace, \quad \beta_{0}=(2, \dots, 2),
\end{eqnarray}
for $0 < \gamma < \gamma^{\ast}$, where $\gamma^{\ast}$ is the scaling exponent of $\Delta^{\beta_{0},\alpha}$ given by $\gamma^{\ast} = \sup\mathcal{I}_{\alpha}$, with $\mathcal{I}_{\alpha}$ being the set of $\gamma \in (0, +\infty)$ such that there exists a positive and $-\gamma$-homogeneous vsicosity supersolution $u_{\gamma}$ of
\begin{eqnarray}
\Delta^{\beta_{0},\alpha}u_{\gamma} \geq 0 , \quad \text{in} \ \mathbb{R}^{n} \setminus \left\lbrace 0 \right\rbrace.
\end{eqnarray}
Through Schauder estimates and stability properties, a standard normalization argument assures that $u = \lim_{\gamma \rightarrow \gamma^{\ast}} u_{\gamma}$ is a fundamental solution to $\Delta^{\beta_{0},\alpha}u = 0$. Besides, a comparison principle for $-\gamma$-homogeneous functions shows that $u$ is unique up to a constant. In \cite{CWW}, the ingredients that guarantee the construction of a viscosity solution of \eqref{Poisson equation introduction} are the existence of $-\gamma$-homogeneous barriers, a background on viscosity solutions theory, $C^{\theta}$ and $L^{q}$-estimates for strong solutions to the non-local Schr\" odinger equation and $C^{\theta}$-regularity for functions in Bessel spaces, see \cite{CS,DK,STEIN,NPV}. Naturally, the anisotropic structure of the kernel that governs $\Delta^{\beta, \alpha}$ requires an improvement of the techniques and appropriate versions of tools used in the isotropic case. In the sequel, we explain how these ingredients appear in the construction of solutions to an anisotropic version of the equation \eqref{Poisson equation introduction} and where we can find them. We also comment on the main difficulties we came across and how we overcame them.

\begin{enumerate}
\item(Barrier function) The first step towards the existence of solutions of \eqref{Poisson equation introduction} is to show that $0 < \gamma^{\ast} < n$. In the anisotropic case, this is not immediate. Given $r> 0$, denote by $\Theta^{\beta}_{r}$ the open ball of radius $r$ in the quasi-distance $\Vert \cdot \Vert_{\beta}$. In \cite{CWW}, the (isotropic) convexity of $u_{\gamma} = \Vert  x \Vert_{\beta_{0}}^{-\gamma}$ on the unitary sphere $\partial \Theta^{\beta_{0}}_{1}$ and the intrinsic scaling of $\Delta^{\beta_{0}, \alpha}$ suggest that $u_{\gamma}$ is the natural barrier for $\Delta^{\beta_{0}, \alpha}$, for $\gamma >0$ sufficiently small. In our case, radial functions are still the suitable barriers. However, in general, $\Vert \cdot \Vert^{-\gamma}_{\beta}$ is not convex on $\partial \Theta^{\beta_{}}_{1}$, in fact, it is not even $C^{2}$. We solve this issue by dilating (uniformly) the homogeneity degrees $b_{i}$ and by refining the construction of barriers made in \cite{LEITAO LP}. Our barrier is given by
\begin{eqnarray}
u_{\gamma} = \Vert x \Vert_{\mu \beta}^{-\gamma/\mu},
\end{eqnarray}
for some $\mu > 1$ sufficiently large. As in \cite{LEITAO LP}, the geometry determined by $\Vert \cdot \Vert_{\mu \beta}$ requires an adequate convexity of $u_{\gamma}$ (depending on $x$) on $\partial \Theta^{\mu \beta}_{1}$. Indeed, we get a universal estimate of the anisotropic convexity of $u_{\gamma}$ outside of \textit{anisotropic cones with universally small aperture}. From this estimate, we use a \textit{fine relation} between anisotropic cones and isotropic cones to prove that $\Delta^{\beta, \alpha}$ \textit{does not degenerate} as $\alpha \rightarrow 2/b_{\max}$ on $\partial \Theta^{\mu \beta}_{1}$, see Lemma \ref{Bar. func.} in Section \ref{sec Fundamental solution: ingredients}. This analysis enables us to access Caffarelli-Silvestre's approach, see \cite{CS}, to get $0 < \alpha_{0} < 2/b_{\max}$ depending on $\beta$ and $n$ such that
\begin{eqnarray}
\Delta^{\beta, \alpha} u_{\gamma} > 0, \quad \text{on} \ \partial\Theta^{\mu \beta}_{1},
\end{eqnarray}
for all $\alpha_{0} < \alpha < 2/b_{\max}$, which leads us to infer that $0 < \gamma^{\ast}$, see Lemma \ref{scaling lemma on the anisotropic sphere} in Section \ref{sec Fundamental solution: ingredients}. The bound $\gamma^{\ast} < n$ is found exactly as in \cite{CWW}, see Lemma \ref{fund. sol. lemma 1} in Section \ref{sec Fundamental solution: ingredients}. The bounds $0 < \gamma^{\ast} < n$ hold the existence of $-\gamma$-homogeneous supersolutions $u_{\gamma}$ of
\begin{eqnarray}
\Delta^{\beta,\alpha} u_{\gamma} \geq 1 \quad \text{on}\  \partial \Theta^{\mu \beta}_{1}, \quad \text{for all} \ \alpha_{0} < \alpha < 2/b_{\max},
\end{eqnarray}
see Lemma \ref{barrier lemma lemma 1} in Section \ref{sec Fundamental solution: ingredients}.
\item(Background on the viscosity solutions theory) Comparison principle and stability properties for viscosity solutions in the anisotropic context are provided in \cite{CLU, SL} and references therein. As in \cite{CWW}, the maximum principle and the strong maximum principle for solutions taking non-positive values near of origin are proved in Section \ref{sec Fundamental solution: ingredients}.

\item($L^{q}$-estimates and $C_{\Vert \cdot \Vert_{\beta}}^{\theta}$-regularity for functions in $H^{\alpha, q}_{\beta}\left( \mathbb{R}^{n}\right)$) Schauder estimates and $L_{q}$-estimates for strong solutions in $H^{\alpha, q}_{\beta}\left( \mathbb{R}^{n}\right)$ to the anisotropic Schr\" odinger equation
$$
\Delta^{\beta, \alpha}u (x) + \kappa u = f, \quad \kappa \geq 0, \quad f \in L^{q}\left( \mathbb{R}^{n}\right),
$$
can be found in \cite{LEITAO LP}. Theorem \ref{main theorem} provides $C_{\Vert \cdot \Vert_{\beta}}^{\theta}$-regularity for functions in the anisotropic Bessel spaces.

\end{enumerate}

With steps 1, 2, and 3 at hands, we then use $L^{q}$-estimates to build a sequence $(u_{k})$ of strong solutions to
\begin{eqnarray}
\Delta^{\beta, \alpha}u_{k}(x) + 2^{-k}u_{k}(x) = g_{k}\left(\Vert x \Vert^{}_{\mu \beta}\right) \quad \text{for all} \ x \in \mathbb{R}^{n},
\end{eqnarray}
for a truncation $g_{k}$ appropriated to the intrinsic scaling of $\Delta^{\beta, \alpha}$. Theorem \ref{main theorem} assures pointwise vanishing at infinity for each $u_{k}$ which, via a maximum principle, drive us to the non-negativity of $u_{k}$, see Corollary \ref{max princ 1} and \ref{u goes to zero as x tends to infty lemma}. Then, a barrier argument and Schauder estimates imply that $u_{k}$ converges to a $-\gamma$-homogeneous function $u_{\gamma}$. Finally, stability properties and strong maximum principle guarantee that $u_{\gamma}$ is positive and satisfies
$$
\Delta^{\beta, \alpha} u_{\gamma} = 1, \quad \text{on} \ \partial \Theta^{\mu \beta}_{1},
$$
which is enough to prove the following result:
\begin{theorem}
\label{existence of fundamental solutions}
There exists $0< \alpha_{0} < 2/b_{\max}$ that depends only on $\beta$ and $n$ such that if $\alpha_{0} < \alpha < 2/b_{\max}$ there is a non-constant and $-\gamma^{\ast}$-homogeneous viscosity solution $\Psi_{\beta, \alpha}$ of \eqref{anisotropic harmonic equation} with $\displaystyle\sup_{\partial \Theta_{1}}\Psi_{\beta, \alpha} = 1$ and $0 < \gamma^{\ast}< n$ depending on $\beta$, $n$ and $\alpha_{}$. Moreover, $\Psi_{\beta, \alpha}$ is unique in the following sense: if $u$ is a positive and $-\gamma$-homogeneous viscosity solution of \eqref{anisotropic harmonic equation} then
\begin{eqnarray}
\gamma = \gamma^{\ast} \quad \text{and} \quad u = c_{0}\Psi_{\beta, \alpha},
\end{eqnarray}
for some constant $c_{0}>0$ depending on $\beta$, $n$ and $\alpha_{}$.
\end{theorem}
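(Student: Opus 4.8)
The plan is to carry out, in the anisotropic geometry induced by $\Vert\cdot\Vert_{\mu\beta}$, the Perron-type scheme of \cite{ASS 1, ASS 2, CWW}, using as black boxes the ingredients assembled in Section \ref{sec Fundamental solution: ingredients}. Fix once and for all $\mu>1$ large enough for the convexity estimate of Lemma \ref{Bar. func.} to hold, and let $0<\alpha_{0}=\alpha_{0}(\beta,n)<2/b_{\max}$ be the threshold furnished by Lemmas \ref{Bar. func.} and \ref{scaling lemma on the anisotropic sphere}; throughout, $\alpha_{0}<\alpha<2/b_{\max}$ and $c_{\beta}=n$. Recall $\gamma^{\ast}=\sup\mathcal{I}_{\alpha}$, with $\mathcal{I}_{\alpha}$ the set of $\gamma>0$ admitting a positive, $-\gamma$-homogeneous viscosity supersolution of $\Delta^{\beta,\alpha}u_{\gamma}\geq 0$ in $\mathbb{R}^{n}\setminus\{0\}$. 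By Lemma \ref{scaling lemma on the anisotropic sphere}, $\Vert x\Vert_{\mu\beta}^{-\gamma/\mu}$ is such a supersolution for $\gamma>0$ small, so $\gamma^{\ast}>0$; by Lemma \ref{fund. sol. lemma 1}, $\gamma^{\ast}<n$. This gives the bounds $0<\gamma^{\ast}<n$ of the statement, with the asserted dependence on $\beta,n,\alpha$.

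For each fixed $\gamma\in(0,\gamma^{\ast})$ I would first produce a positive, $-\gamma$-homogeneous viscosity solution $u_{\gamma}$ of $\Delta^{\beta,\alpha}u_{\gamma}=1$ on $\partial\Theta^{\mu\beta}_{1}$. Using the $-\gamma$-homogeneous supersolution of Lemma \ref{barrier lemma lemma 1} as an upper barrier and the $L^{q}$-theory of \cite{LEITAO LP} (with $q$ large, so that $\alpha q>n$), solve $\Delta^{\beta,\alpha}u_{k}+2^{-k}u_{k}=g_{k}(\Vert x\Vert_{\mu\beta})$, where $g_{k}(\Vert\cdot\Vert_{\mu\beta})$ is a truncation adapted to the intrinsic scaling of $\Delta^{\beta,\alpha}$ that increases to the $-(\gamma+2\alpha)$-homogeneous function equal to $1$ on $\partial\Theta^{\mu\beta}_{1}$; then $u_{k}\in H^{\alpha,q}_{\beta}(\mathbb{R}^{n})$. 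By \eqref{equality Bessel Lizorkin and Bessel Leitao} and Theorem \ref{main theorem} each $u_{k}$ lies in $C^{\theta}_{\Vert\cdot\Vert_{\beta}}(\mathbb{R}^{n})\cap L^{q}(\mathbb{R}^{n})$, hence vanishes at infinity; Corollaries \ref{max princ 1} and \ref{u goes to zero as x tends to infty lemma} then force $u_{k}\geq 0$, and the barrier bounds $u_{k}$ above on compact subsets of $\mathbb{R}^{n}\setminus\{0\}$ uniformly in $k$. Schauder estimates from \cite{LEITAO LP} give local $C^{\theta}$-compactness away from the origin, and stability of anisotropic viscosity solutions (\cite{CLU,SL}) lets one pass to a limit $u_{\gamma}$ with $\Delta^{\beta,\alpha}u_{\gamma}=1$ on $\partial\Theta^{\mu\beta}_{1}$. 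The scaling identity $\Delta^{\beta,\alpha}(v\circ T_{\beta,\kappa})=\kappa^{2\alpha}(\Delta^{\beta,\alpha}v)\circ T_{\beta,\kappa}$ shows that $\kappa^{\gamma}\,u_{\gamma}\circ T_{\beta,\kappa}$ solves the same equation with the same barrier bounds and the same decay, so by the comparison principle of \cite{CLU,SL} it equals $u_{\gamma}$; thus $u_{\gamma}$ is $-\gamma$-homogeneous, and the strong maximum principle of Section \ref{sec Fundamental solution: ingredients} gives $u_{\gamma}>0$ in $\mathbb{R}^{n}\setminus\{0\}$.

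The fundamental solution is obtained by letting $\gamma\uparrow\gamma^{\ast}$. Set $\widetilde{u}_{\gamma}:=u_{\gamma}/\sup_{\partial\Theta_{1}}u_{\gamma}$, so that $\Delta^{\beta,\alpha}\widetilde{u}_{\gamma}=c_{\gamma}$ on $\partial\Theta^{\mu\beta}_{1}$ with $c_{\gamma}>0$ and $\sup_{\partial\Theta_{1}}\widetilde{u}_{\gamma}=1$. Representing $\widetilde{u}_{\gamma}$ through its angular profile on $\partial\Theta^{\mu\beta}_{1}$ and using the Schauder estimates, these profiles are precompact in $C(\partial\Theta^{\mu\beta}_{1})$; along a subsequence $\widetilde{u}_{\gamma}\to\Psi_{\beta,\alpha}$ locally uniformly on $\mathbb{R}^{n}\setminus\{0\}$, with $\Psi_{\beta,\alpha}$ nonnegative, $-\gamma^{\ast}$-homogeneous, $\sup_{\partial\Theta_{1}}\Psi_{\beta,\alpha}=1$, and (by the strong maximum principle) strictly positive on $\mathbb{R}^{n}\setminus\{0\}$; since $\gamma^{\ast}>0$ it blows up at the origin and is therefore non-constant. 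By stability, $\Delta^{\beta,\alpha}\Psi_{\beta,\alpha}=c$ on $\partial\Theta^{\mu\beta}_{1}$ with $c=\lim c_{\gamma}\geq 0$. If $c>0$, then $\Psi_{\beta,\alpha}$ is a positive, $-\gamma^{\ast}$-homogeneous strict supersolution; since $\gamma^{\ast}<n=c_{\beta}$, the integral defining $\Delta^{\beta,\alpha}$ converges for $-\gamma$-homogeneous profiles with $\gamma$ near $\gamma^{\ast}$ and depends continuously on $\gamma$ (dominated convergence), so keeping the angular profile of $\Psi_{\beta,\alpha}$ and slightly increasing $\gamma$ still yields a positive $-\gamma$-homogeneous supersolution with $\gamma>\gamma^{\ast}$, contradicting $\gamma^{\ast}=\sup\mathcal{I}_{\alpha}$. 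Hence $c=0$ and $\Delta^{\beta,\alpha}\Psi_{\beta,\alpha}=0$ in $\mathbb{R}^{n}\setminus\{0\}$, which establishes existence.

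For uniqueness, let $u>0$ be a $-\gamma$-homogeneous viscosity solution of \eqref{anisotropic harmonic equation}. Being in particular a positive, $-\gamma$-homogeneous viscosity supersolution of $\Delta^{\beta,\alpha}u\geq 0$, we have $\gamma\in\mathcal{I}_{\alpha}$, hence $\gamma\leq\gamma^{\ast}$. If $\gamma<\gamma^{\ast}$, then $u$ grows strictly more slowly than $\Psi_{\beta,\alpha}$ at the origin (as $\Vert x\Vert_{\beta}^{-\gamma}=o(\Vert x\Vert_{\beta}^{-\gamma^{\ast}})$ when $x\to0$), so a B\^ocher-type removable-singularity argument (cf.\ \cite{BOCHER,CWW}) forces $u$ to extend to a bounded viscosity solution across the origin, contradicting $u(x)\to+\infty$ as $x\to0$; therefore $\gamma=\gamma^{\ast}$. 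Finally, with $\gamma=\gamma^{\ast}$ and $c_{0}:=\min_{\partial\Theta_{1}}(u/\Psi_{\beta,\alpha})>0$, the function $w:=u-c_{0}\Psi_{\beta,\alpha}$ is a $-\gamma^{\ast}$-homogeneous viscosity solution of $\Delta^{\beta,\alpha}w=0$ that is nonnegative on $\partial\Theta_{1}$, hence, by homogeneity, nonnegative on $\mathbb{R}^{n}\setminus\{0\}$, and vanishes at a point of $\partial\Theta_{1}$; the strong maximum principle of Section \ref{sec Fundamental solution: ingredients} forces $w\equiv0$, i.e.\ $u=c_{0}\Psi_{\beta,\alpha}$, and applying this to two candidate limits pins $\Psi_{\beta,\alpha}$ down uniquely. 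I expect the real work to be concentrated in the $-\gamma$-homogeneity of $u_{\gamma}$ and in the removable-singularity step of the uniqueness part: both require comparison and maximum principles robust enough to handle viscosity solutions singular at the origin and carrying a prescribed anisotropic homogeneity, and it is precisely the anisotropy of the kernel $\Vert\cdot\Vert_{\beta}^{-(c_{\beta}+2\alpha)}$, together with the barrier and Schauder machinery of Section \ref{sec Fundamental solution: ingredients}, that makes these statements substantially more delicate than in the isotropic treatment of \cite{CWW}.
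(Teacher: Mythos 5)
Your proposal follows the same Perron-type blueprint as the paper: the bounds $0<\gamma^{\ast}<n$ come from Lemmas \ref{Bar. func.}, \ref{scaling lemma on the anisotropic sphere}, and \ref{fund. sol. lemma 1}; for each $\gamma<\gamma^{\ast}$ you essentially reproduce Proposition \ref{main supersolution prop} to build $u_{\gamma}$ solving the anisotropic Poisson equation; you normalize, pass to the limit via Schauder estimates and stability, and argue that the limiting right-hand side vanishes by noting that a positive limit would produce a strict $-\gamma^{\ast}$-homogeneous supersolution and hence membership $\gamma^{\ast}\in\mathcal{I}_{\alpha}$ with room to spare --- this is precisely the content of Lemma \ref{barrier lemma lemma 2} and Remark \ref{conseq. of main supersolution}. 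The final step $u = c_{0}\Psi_{\beta,\alpha}$ is, modulo phrasing, Lemma \ref{strong maximum principle for homogeneous function}.

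There is, however, a genuine gap in your argument that $\gamma=\gamma^{\ast}$. To rule out $\gamma<\gamma^{\ast}$ you invoke a B\^ocher-type removable-singularity theorem to conclude that a positive $-\gamma$-homogeneous solution extends as a bounded solution across the origin. No such theorem is established here for the anisotropic non-local operator $\Delta^{\beta,\alpha}$; worse, the characterization of isolated singularities is derived in Section \ref{sec Existence of Fundamental solutions and consequences} as a \emph{consequence} of Theorem \ref{existence of fundamental solutions}, so appealing to it inside the proof of this theorem would be circular. The paper's route is both shorter and self-contained: Corollary \ref{barrier lemma corollary 1} asserts that for $0<\gamma<\gamma^{\ast}$ the only $u\in\mathcal{A}_{\gamma}$ with $\Delta^{\beta,\alpha}u\leq 0$ in $\mathbb{R}^{n}\setminus\{0\}$ is $u\equiv 0$. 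Since a positive, $-\gamma$-homogeneous viscosity solution of \eqref{anisotropic harmonic equation} satisfies $\Delta^{\beta,\alpha}u\leq 0$ and is not identically zero, this forces $\gamma\geq\gamma^{\ast}$, which together with your correct observation $\gamma\leq\gamma^{\ast}$ (from $\gamma\in\mathcal{I}_{\alpha}$) pins down $\gamma=\gamma^{\ast}$. Replace the removable-singularity step with this appeal to Corollary \ref{barrier lemma corollary 1}, and the rest of the argument goes through as in the paper.
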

As in \cite{CWW}, Theorem \ref{existence of fundamental solutions} and the Harnack Inequality for viscous solutions of \eqref{anisotropic harmonic equation}, see Section \ref{sec Fundamental solution: ingredients}, allow us to get a Louville-type Theorem and, as a consequence, we characterize the isolated singularity of a viscous solution $u$ of \eqref{anisotropic harmonic equation}, where $u$ is positive in $\mathbb{R}^{n}$ and locally bounded except at the origin. We also prove $C_{\Vert \cdot \Vert_{\beta}}^{2/b_{\max}}$-regularity for fundamental solutions of \eqref{anisotropic harmonic equation}, see Section \ref{sec Existence of Fundamental solutions and consequences}.

We stress that the condition $c_{\beta} = n$ comes from the theory developed by Lizorkin in \cite{LIZORKIN 1, LIZORKIN 2, LIZORKIN 3}, that is, if we get a version of Theorem \ref{main theorem} for any $c_{\beta} > 0$, our results remain valid. The assumption on the fractional constant $\alpha$ is due to the construction of our barriers and the theory developed in \cite{CLU, SL}. We further mention that, in the context of anisotropic non-local elliptic equations, the usefulness of the barriers built here transcend the purposes of this paper.

The paper is organized as follows. In Section 2 we obtain the tools required to prove Theorems \ref{main theorem} and \ref{existence of fundamental solutions}: decay of oscillation of the average of $u$ in anisotropic balls, $H_{\beta}^{\alpha, q}(\mathbb{R}^{n})$ is continuous embedding in $W_{\beta}^{\alpha, q}(\mathbb{R}^{n})$, and the necessary ingredients on viscosity solution theory to build anisotropic fundamental solutions. Section 3 is devoted to the proof of Theorem \ref{main theorem} and, as a consequence, we show that functions in anisotropic Bessel spaces satisfying pointwise vanishing at infinity when $c_{\beta} = n$. In Section 4 we prove the existence of fundamental solutions at origin of \eqref{anisotropic harmonic equation} and, as a consequence, we get a Louville-type Theorem and characterization of the isolated singularity of a viscosity solution of \eqref{anisotropic harmonic equation}.

\section{Preliminaries:}
In this section we gather the results that we will systematically use along the work. We divide it into two parts. In the first part we address some elementary facts on the topology generated by anisotropic balls and prove the anisotropic versions of the Campanato theory required to access $C^{\gamma}_{\Vert \cdot \Vert_{\beta}}$-regularity for functions in $W_{\beta}^{\alpha, q}(\mathbb{R}^{n})$. We also show that $H_{\beta}^{\alpha, q}(\mathbb{R}^{n})$ is continuously embedding in $W_{\beta}^{\alpha, q}(\mathbb{R}^{n})$ for $c_{\beta}=n$. In the second part we collect and prove the necessary tools to build anisotropic fundamental solutions.
\subsection{Anisotropic topology and elements of the Campanato theory}
Given $r > 0$, $\beta = (b_{1}, \dots, b_{n})\in \mathbb{R}^{n}_{+}$, and $x \in \mathbb{R}^{n}$, we will consider the anisotropic sets
\begin{eqnarray*}
\Theta^{\beta}_{r}\left( x \right) := \left\lbrace y \in \mathbb{R}^{n}: \Vert y - x \Vert_{\beta} < r \right\rbrace \quad \text{and} \quad E^{\beta}_{r}\left( x \right) := \left\lbrace \left( y_{1}, \dots, y_{n}\right) \in \mathbb{R}^{n} : \sum_{i=1}^{n} \frac{\left( y_{i} - x_{i} \right)^{2}}{r^{\frac{4}{b_{i}}}} < 1 \right\rbrace,
\end{eqnarray*}
where $\mathbb{R}^{n}_{+}=\left\lbrace (x_{1}, \dots, x_{n}) \in \mathbb{R}^{n}: x_{i} > 0 \right\rbrace$.
In the case $x=0,$ we will denote by
$$
\Theta^{\beta}_{r}\left( 0 \right) = \Theta^{\beta}_{r} \quad \text{and} \quad E^{\beta}_{r}\left( 0 \right) =  E^{\beta}_{r}.
$$
Moreover, in order not to overload the notation, we will drop the subscript $\beta$ when no confusion can arise. For example,  $\Theta^{}_{r}\left( x \right) = \Theta^{\beta}_{r}\left( x \right),$  $\Vert \cdot \Vert = \Vert \cdot \Vert_{\beta},$ and $c_\beta=c$.  We will also use the classical average notation
$$
\displaystyle \dashint_{E^{\beta}_{r}\left( x \right) } u(z) \, dz = \dfrac{1}{\vert E^{\beta}_{r}\left( x \right)\vert}\displaystyle \int_{E^{\beta}_{r}\left( x \right) } u(z) \, dz,
$$
for all measurable function $u$. We begin with some basic results on the topology generated by anisotropic balls and the equivalence of anisotropic norms on anisotropic spheres. We also show how anisotropic Campanato's semi-norm and the norm of the anisotropic Gagliardo space are related.

\begin{lemma}
\label{elemntary results}
Given $r > 0$ and $x \in \mathbb{R}^{n}$ the following assertions hold:
\begin{enumerate}
\item $E^{}_{r}(x) \subset \Theta_{r\sqrt{n}}(x) \subset E_{rC}(x)$, where $C>0$ is a constant that depends on $\beta$ and $n$.
\item If $\tau_{1}$ is the topology generated by Euclidean balls $B_{r}(z)$ and $\tau_{2}$ is the topology generated by anisotropic balls $E_{r}(z)$ (or $\Theta_{r}(z)$), then $\tau_{1} = \tau_{2}$.
\item Let $\beta_{1} = (b_{1}, \dots, b_{n})\in \mathbb{R}_{+}^{n}$ and $\beta_{2} = (d_{1}, \dots, d_{n}) \in \mathbb{R}_{+}^{n}$. Then there exists a constant $C>0$ that depends on $\beta_{1}$, $\beta_{2}$, and $n$ such that
\begin{eqnarray}
\label{equivalence between norms on anisotropic sphere 1}
C^{-1} \leq \Vert x \Vert^{}_{\beta_{1}} \leq C, \quad \text{for all} \ x \in \partial \Theta^{\beta_{2}}_{1}.
\end{eqnarray}
\item If $u \in W_{\beta}^{\alpha, q}(\mathbb{R}^{n}),$ then the function $x \rightarrow u_{x, r}$ is continuous in $\mathbb{R}^{n}$.
\item If $u \in W_{\beta}^{\alpha, q}(\mathbb{R}^{n})$ then there exists a constant $C>0$ depending only on $\beta$, $\alpha$, and $n$ such that
\begin{eqnarray}
\Vert u \Vert_{L_{\infty}(\mathbb{R}^{n})} \leq C\Vert u \Vert_{W_{\beta}^{\alpha, q}(\mathbb{R}^{n})} \quad \text{and} \quad \left[ u \right]_{q, \alpha q} \leq C\Vert u \Vert_{W_{\beta}^{\alpha, q}(\mathbb{R}^{n})}.
\end{eqnarray}
\end{enumerate}
\end{lemma}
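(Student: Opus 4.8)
The plan is to prove the five assertions essentially in the listed order; (1)--(4) come directly from the definitions, and (5) is the only place where a genuine tool -- the anisotropic Lebesgue differentiation theorem -- is needed. For (1) I would argue coordinate-wise: if $y\in E_r(x)$ then $(y_i-x_i)^2/r^{4/b_i}<1$, so $|y_i-x_i|<r^{2/b_i}$ and hence $\|y-x\|_\beta^2=\sum_i|y_i-x_i|^{b_i}<nr^2$, i.e. $y\in\Theta_{r\sqrt n}(x)$; conversely $\|y-x\|_\beta<r\sqrt n$ gives $|y_i-x_i|^{b_i}<nr^2$, so $(y_i-x_i)^2/(Cr)^{4/b_i}<(n/C^2)^{2/b_i}$, and choosing $C=C(n,\beta)$ large enough that $\sum_i(n/C^2)^{2/b_i}\le1$ yields $y\in E_{Cr}(x)$. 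For (2) I would complement (1) with the elementary sandwich $B_{\rho_-}(x)\subset E_r(x)\subset B_{\rho_+}(x)$, where $\rho_-=\min_i r^{2/b_i}$ and $\rho_+=\sqrt n\max_i r^{2/b_i}$ (again checked coordinate-wise); since $\rho_\pm\to0$ as $r\to0$, the families $\{E_r(z)\}$, $\{\Theta_r(z)\}$ and $\{B_r(z)\}$ give equivalent neighbourhood bases at every point, whence $\tau_1=\tau_2$. For (3), note $\partial\Theta^{\beta_2}_1=\{x:\|x\|_{\beta_2}=1\}$ is nonempty, closed (continuity of $\|\cdot\|_{\beta_2}$) and bounded (since $\|x\|_{\beta_2}=1$ forces $|x_i|\le1$), hence compact, and does not contain $0$; as $x\mapsto\|x\|_{\beta_1}$ is continuous and strictly positive there it attains a positive minimum $m$ and a finite maximum $M$, and one takes $C=\max\{M,1/m\}$.

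For (4), since $|E_r(x)|=|E_r|$ is independent of $x$, the substitution $z\mapsto x+z$ gives $u_{x,r}=|E_r|^{-1}\int_{E_r}u(x+z)\,dz$, and H\"older's inequality then yields
\begin{equation*}
|u_{x,r}-u_{x',r}|\le|E_r|^{-1/q}\,\bigl\|u(\cdot+(x-x'))-u\bigr\|_{L^q(\mathbb{R}^n)},
\end{equation*}
which tends to $0$ as $x'\to x$ by continuity of translations on $L^q(\mathbb{R}^n)$, $1\le q<\infty$ (so the map is in fact uniformly continuous, and only $u\in L^q$ is used). For the first bound in (5), Jensen's inequality gives $|u(x)-u_{x_0,r}|^q\le\dashint_{E_r(x_0)}|u(x)-u(y)|^q\,dy$, hence
\begin{equation*}
\int_{E_r(x_0)}|u(x)-u_{x_0,r}|^q\,dx\le\frac1{|E_r|}\int_{E_r(x_0)}\!\int_{E_r(x_0)}|u(x)-u(y)|^q\,dx\,dy;
\end{equation*}
since $x,y\in E_r(x_0)$ forces $|x_i-y_i|<2r^{2/b_i}$ and so $\|x-y\|_\beta\le C_\beta r$, and since $|E_r|=|E_1|r^{c_\beta}$ (because $c_\beta=\sum_i2/b_i$), inserting $\|x-y\|_\beta^{c_\beta+\alpha q}$ in the numerator and denominator gives $r^{-\alpha q}\int_{E_r(x_0)}|u-u_{x_0,r}|^q\le C(n,\beta,\alpha)\|u\|_{W_\beta^{\alpha,q}}^q$ uniformly in $x_0,r$; taking the supremum and the $q$-th root gives $[u]_{q,\alpha q}\le C\|u\|_{W_\beta^{\alpha,q}}$.

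For the $L^\infty$ bound in (5) -- where the standing hypothesis $\alpha q>c_\beta$, equivalently $\theta=\alpha-c_\beta/q>0$ (cf.\ \eqref{holder reg}), is used -- I would first record the elementary estimate $|u_{x_0,R}|\le|E_R|^{-1/q}\|u\|_{L^q}=CR^{-c_\beta/q}\|u\|_{L^q}$, and then the single dyadic step: for $E_\rho(x_0)\subset E_{2\rho}(x_0)$,
\begin{equation*}
|u_{x_0,\rho}-u_{x_0,2\rho}|\le\Bigl(\dashint_{E_\rho(x_0)}|u(x)-u_{x_0,2\rho}|^q\,dx\Bigr)^{1/q}\le|E_\rho|^{-1/q}(2\rho)^{\alpha}[u]_{q,\alpha q}=C[u]_{q,\alpha q}\,\rho^{\theta}.
\end{equation*}
Summing the telescoping series $u(x_0)=u_{x_0,R}+\sum_{k\ge0}(u_{x_0,2^{-k-1}R}-u_{x_0,2^{-k}R})$, which converges to $u(x_0)$ for a.e.\ $x_0$ by the Lebesgue differentiation theorem for anisotropic balls (Caffarelli--Calder\'on covering), then gives $|u(x_0)|\le CR^{-c_\beta/q}\|u\|_{L^q}+C'R^{\theta}[u]_{q,\alpha q}$ for a.e.\ $x_0$; taking $R=1$ and using the semi-norm bound just proved yields $\|u\|_{L^\infty}\le C\|u\|_{W_\beta^{\alpha,q}}$.

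I expect the only real obstacle to be organizational: the $L^\infty$ estimate in (5) must be placed after (or proved alongside) the anisotropic Lebesgue differentiation theorem, which relies on the Caffarelli--Calder\'on covering lemma; once that is available, the telescoping argument is standard. It is worth stressing that this estimate does not require the full decay-of-averages bound \eqref{decay of average}, only the one-step version above, and that every other part of the lemma reduces to coordinate-wise inequalities, compactness of anisotropic spheres, $L^q$-continuity of translations, and Jensen/H\"older.
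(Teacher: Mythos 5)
Your proof is correct and in several places more careful and more self-contained than the paper's. Concretely: for assertions (1)--(2) the paper simply cites Remark~2.9 in \cite{CLU} and Remark~2.3 in \cite{LEITAO}, whereas you supply a direct coordinate-wise verification, which is elementary and fine. For (3) you argue by compactness of $\partial\Theta^{\beta_2}_1$ and continuity of $\|\cdot\|_{\beta_1}$; the paper instead gives an explicit two-sided estimate ($|x_i|\le 1$ gives the upper bound, and the pigeonhole observation that some $|x_{i_0}|^{d_{i_0}}\ge 1/n$ gives the lower bound). Both yield the same conclusion; the paper's version also produces an explicit constant. For (4) you use translation invariance of $|E_r|$ together with $L^q$-continuity of translations, so that the map $x\mapsto u_{x,r}$ is in fact uniformly continuous and the proof only uses $u\in L^q$; the paper instead splits $E_r(x)$ and $E_r(x_0)$ by their symmetric difference and appeals to absolute continuity of the integral. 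These are equivalent routes, yours being slightly cleaner. For (5) the semi-norm bound $[u]_{q,\alpha q}\le C\|u\|_{W^{\alpha,q}_\beta}$ is proved the same way (Jensen, then inserting $\|x-y\|_\beta^{c_\beta+\alpha q}$ bounded by $(Cr)^{c_\beta+\alpha q}$ on $E_r(x_0)$); but for the $L^\infty$ bound the paper merely writes $|u(x)|\le|u_{x,1}|+|u_{x,1}-u(x)|\le C\|u\|_{L^q}+C_5[u]_{q,\alpha q}$ and does not actually justify the estimate $|u_{x,1}-u(x)|\le C_5[u]_{q,\alpha q}$ at that point (it needs the dyadic decay of averages, Lemma~\ref{decay of average lemma}, together with the anisotropic Lebesgue differentiation theorem --- both of which appear later in the paper). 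Your explicit telescoping argument supplies exactly the missing step, and you are also right to flag that $\theta=\alpha-c_\beta/q>0$ (equivalently $\alpha q>c_\beta$) is genuinely required for the sum to converge: this hypothesis does not appear in the statement of the lemma, though it is a standing assumption of Theorem~\ref{main theorem}. In short, the difference is one of bookkeeping and completeness rather than substance, and your version has the virtue of being self-contained and of making the hidden hypothesis explicit.
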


\begin{proof}
For assertions 1 and 2 see Remark 2.9 in \cite{CLU} and Remark 2.3 in \cite{LEITAO}. Furthermore, if $x \in \partial \Theta^{\beta_{2}}_{1}$, we get
\begin{eqnarray}
\vert x_{i} \vert \leq 1 \quad \text{and} \quad \Vert x \Vert_{\beta_{1}}\leq \sqrt{n}.
\end{eqnarray}
On the other hand, if $\Vert x \Vert_{\beta_{2}} = 1$ there exists $1 \leq i_{0} \leq n$ such that $\vert x_{i_{0}} \vert^{d_{i_{0}}} \geq 1/n$. Thus, we find
\begin{eqnarray}
\Vert x \Vert_{\beta_{1}} \geq \vert x_{i_{0}} \vert^{b_{i_{0}}/2} \geq \left( \dfrac{1}{n}\right)^{b_{i_{0}}/2d_{i_{0}}}.
\end{eqnarray}
We now choose $C = \max \left\lbrace n, n^{-b_{i_{0}}/2d_{i_{0}}}\right\rbrace$ to get \eqref{equivalence between norms on anisotropic sphere 1}. To prove assertion 4 note that
\begin{eqnarray}
\label{elementary set relation}
\chi_{A} = \chi_{A \cap B} + \chi_{(A \setminus B)} \quad \text{for all} \ A, B \subset \mathbb{R}^{n}.
\end{eqnarray}
From \eqref{elementary set relation} it follows that
\begin{eqnarray}
\label{elementary lemma 1}
\vert u_{x, r} - u_{x_{0}, r} \vert & = & \nonumber \dfrac{1}{Cr^{c}}\left | \displaystyle \int_{E_{r}(x)}u(z)\, dz - \displaystyle \int_{E_{r}(x_{0})}u(z)\, dz  \right |^{} \\
& = & \dfrac{1}{Cr^{c}}\left | \displaystyle \int_{(E_{r}(x)\setminus E_{r}(x_{0}))}u(z)\, dz - \displaystyle \int_{(E_{r}(x_{0})\setminus E_{r}(x_{}))}u(z)\, dz  \right |^{} \\ \nonumber
& \leq & \dfrac{1}{Cr^{c}} \left( \displaystyle \int_{(E_{r}(x)\setminus E_{r}(x_{0}))} \vert u(z) \vert \, dz + \displaystyle \int_{(E_{r}(x_{0})\setminus E_{r}(x_{}))} \vert u(z) \vert \, dz \right),
\end{eqnarray}
where $\vert E_{r}(z) \vert = C r^{c}$ for all $z \in \mathbb{R}^{n}$. By  H\" older Inequality, we find
\begin{eqnarray}
\label{elementary lemma 2}
\displaystyle \int_{(E_{r}(x)\setminus E_{r}(x_{0}))} \vert u(z) \vert \, dz & \leq & \vert (E_{r}(x)\setminus E_{r}(x_{0}))\vert^{1-\frac{1}{q}} \left( \displaystyle \int_{(E_{r}(x)\setminus E_{r}(x_{0}))} \vert u(z) \vert^{q} \, dz \right)^{\frac{1}{q}}. \\ \nonumber
\end{eqnarray}
Then, combining \eqref{elementary lemma 1} and \eqref{elementary lemma 2} we get
\begin{eqnarray}
\label{elementary lemma 3}
\vert u_{x, r} - u_{x_{0}, r} \vert & \leq & \dfrac{1}{Cr^{\frac{c}{q}}} \left( \Vert u \Vert_{L_{q}(E_{r}(x)\setminus E_{r}(x_{0}))} + \Vert u \Vert_{L_{q}(E_{r}(x_{0})\setminus E_{r}(x_{}))} \right).
\end{eqnarray}
Since $\vert u \vert^{q}$ is integrable and
$$
\lim_{x \rightarrow x_{0}} \vert (E_{r}(x)\setminus E_{r}(x_{0}))\vert = 0,
$$
we use \eqref{elementary lemma 3} to conclude that $u$ is continuous at $x_{0}$. Finally, we estimate
\begin{eqnarray}
\label{}
\vert u(x) - u_{x_{0}, r} \vert^{q} & = & \nonumber \left | u(x) - \displaystyle \dashint_{E_{r}(x_{0})}u(z)\, dz  \right |^{q} \\
& = & \left |  \displaystyle \dashint_{E_{r}(x_{0})} \left( u(x) - u(z) \right) \, dz  \right |^{q} \\ \nonumber
& \leq & \left(  \displaystyle \dashint_{E_{r}(x_{0})} \left| u(x) - u(z) \right | \, dz  \right)^{q}
\end{eqnarray}
and apply H\" older Inequality to obtain
\begin{eqnarray}
\label{}
\displaystyle \dashint_{E_{r}(x_{0})} \left| u(x) - u(z)   \right| \, dz & \leq & \dfrac{\left | E_{r}(x_{0}) \right |^{1 - 1/q }}{\left | E_{r}(x_{0}) \right |} \left(\int_{E_{r}(x_{0})} \left| u(x) - u(z) \right |^{q} \, dz \right )^{\frac{1}{q}} \\ \nonumber
\end{eqnarray}
and deduce that
\begin{eqnarray}
\label{elementary lemma 4}
\vert u(x) - u_{x_{0}, r} \vert^{q} & \leq & \displaystyle \dashint_{E_{r}(x_{0})} \left| u(x) - u(z) \right |^{q} \, dz.
\end{eqnarray}
Integrating \eqref{elementary lemma 4} on $E_{r}(x_{0})$ we get

\begin{eqnarray}
\label{}
\int_{E_{r}(x_{0})}\vert u(x) - u_{x_{0}, r} \vert^{q} \, dx & \leq & \displaystyle \dfrac{1}{\left | E_{r}(x_{0}) \right |}\int_{E_{r}(x_{0})} \int_{E_{r}(x_{0})} \left| u(x) - u(z) \right |^{q} \, dz \,dx \\ \nonumber
& \leq & \displaystyle \dfrac{(C_{1}r)^{c + \alpha q}}{\left | E_{r}(x_{0}) \right |}\int_{E_{r}(x_{0})} \int_{E_{r}(x_{0})}\dfrac{\left| u(x) - u(z) \right |^{q}}{\Vert x - z \Vert^{c + \alpha q}} \, dz \,dx \\ \nonumber
& \leq & \displaystyle C_{2}r^{\alpha q} \Vert u \Vert^q_{W_{\beta}^{\alpha, q}(\mathbb{R}^{n})},
\end{eqnarray}
where we have used assertion 1 to find
$$
x, y \in E_{r}(x_{0}) \subset \Theta_{Cr}.
$$
Furthermore, for all $x \in \mathbb{R}^{n}$, we estimate
\begin{eqnarray}
\label{elementary lemma 5}
\vert u(x) \vert & \leq & \vert u_{x,1} \vert + \vert u_{x,1} - u(x) \vert \\ \nonumber
&\leq & \dfrac{1}{\vert E_{1}(x)\vert^{\frac{1}{q}}} \Vert u \Vert_{L_{q}(\mathbb{R}^{n})} + C_{5} \left[ u\right]_{q, \alpha q}\\ \nonumber
&\leq & C_{6} \Vert u \Vert_{W_{\beta}^{\alpha, q}(\mathbb{R}^{n})}.
\end{eqnarray}
Hence, assertion 5 is proved.
\end{proof}

The next lemma provides the decay of oscillation of the average of $u \in W_{\beta}^{\alpha, q}(\mathbb{R}^{n})$ in anisotropic balls.

\begin{lemma}
\label{decay of average lemma}
Given $1 < q < +\infty$ and $u \in W_{\beta}^{\alpha, q}(\mathbb{R}^{n})$ assume that $c < \alpha \leq c + q$ and $0 < r < R$. Then there exists a constant $C> 0$ depending on $\beta$, $\alpha$, and $n$ such that
\begin{eqnarray}
\label{decay of average lemma 1}
\left | u_{x_{0},R} - u_{x_{0},r} \right | \leq C \left[ u \right]_{q, \alpha q} R^{\theta}, \quad \text{for} \ \theta = \alpha - \dfrac{c}{q},
\end{eqnarray}
for all $x_{0} \in \mathbb{R}^{n}$.
\end{lemma}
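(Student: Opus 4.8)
The plan is to transplant the classical Campanato dyadic‑telescoping argument to the geometry of the concentric ellipsoids $E^{\beta}_{r}(x_{0})$. Two elementary facts carry all the anisotropic content. First, if $0<\rho<\sigma$ then $E^{\beta}_{\rho}(x_{0})\subset E^{\beta}_{\sigma}(x_{0})$: this is immediate from the defining inequality of $E^{\beta}_{r}(x_{0})$, since $\rho^{4/b_{i}}<\sigma^{4/b_{i}}$ for every $i$. Second, $|E^{\beta}_{r}(x_{0})|=|E^{\beta}_{1}|\,r^{c}$, because the $i$‑th semi‑axis of $E^{\beta}_{r}(x_{0})$ equals $r^{2/b_{i}}$ and $c=c_{\beta}=\sum_{i}2/b_{i}$. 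After that the proof is one‑variable bookkeeping, exactly as in the Euclidean case.

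First I would prove the one‑step comparison: for $0<\rho<\sigma$,
\begin{equation*}
\left|u_{x_{0},\rho}-u_{x_{0},\sigma}\right|\le |E^{\beta}_{1}|^{-1/q}\,\frac{\sigma^{\alpha}}{\rho^{c/q}}\,[u]_{q,\alpha q}.
\end{equation*}
Writing $u_{x_{0},\rho}-u_{x_{0},\sigma}=\dashint_{E^{\beta}_{\rho}(x_{0})}\bigl(u(x)-u_{x_{0},\sigma}\bigr)\,dx$ and applying H\"older's inequality gives $\left|u_{x_{0},\rho}-u_{x_{0},\sigma}\right|^{q}\le \dashint_{E^{\beta}_{\rho}(x_{0})}\left|u(x)-u_{x_{0},\sigma}\right|^{q}\,dx$; then I enlarge the domain of integration to $E^{\beta}_{\sigma}(x_{0})$ using the inclusion above, divide by $|E^{\beta}_{\rho}(x_{0})|=|E^{\beta}_{1}|\,\rho^{c}$, and bound $\int_{E^{\beta}_{\sigma}(x_{0})}\left|u(x)-u_{x_{0},\sigma}\right|^{q}\,dx\le\sigma^{\alpha q}[u]_{q,\alpha q}^{q}$ directly from the definition of the anisotropic Campanato semi‑norm. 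Taking $q$‑th roots yields the displayed inequality.

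Next I would chain this along a dyadic sequence of radii. Given $0<r<R$, I choose $N\ge 0$ with $2^{-(N+1)}R\le r<2^{-N}R$ and set $\rho_{k}=2^{-k}R$, so that $\rho_{N+1}\le r<\rho_{N}<\dots<\rho_{0}=R$ and $\rho_{N}\le 2r$, and telescope
\begin{equation*}
u_{x_{0},R}-u_{x_{0},r}=\sum_{k=0}^{N-1}\bigl(u_{x_{0},\rho_{k}}-u_{x_{0},\rho_{k+1}}\bigr)+\bigl(u_{x_{0},\rho_{N}}-u_{x_{0},r}\bigr).
\end{equation*}
To the $k$‑th term I apply the one‑step estimate with $(\rho,\sigma)=(\rho_{k+1},2\rho_{k+1})$, obtaining a bound $C\,[u]_{q,\alpha q}\,2^{-(k+1)\theta}R^{\theta}$ with $\theta=\alpha-c/q$ and $C=C(\beta,\alpha,n)$; to the leftover term I apply it with $(\rho,\sigma)=(r,\rho_{N})$ and use $\rho_{N}\le 2r<2R$ together with $\theta>0$ to get a bound $C\,[u]_{q,\alpha q}\,R^{\theta}$. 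The hypothesis $c<\alpha$ and $q>1$ force $c/q<c<\alpha$, hence $\theta>0$, so $\sum_{k\ge0}2^{-(k+1)\theta}<\infty$; summing the estimates above produces $\left|u_{x_{0},R}-u_{x_{0},r}\right|\le C\,[u]_{q,\alpha q}\,R^{\theta}$ with $C$ depending only on $\beta$, $\alpha$, $n$, which is \eqref{decay of average lemma 1}.

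I do not expect a genuine obstacle here: the argument is a line‑by‑line analogue of the classical Campanato estimate, and the only points requiring attention are the nestedness and the exact $r^{c}$‑scaling of $|E^{\beta}_{r}(x_{0})|$ used in the one‑step bound, and the handling of the single non‑dyadic step from $\rho_{N}$ down to $r$. (The upper bound $\alpha\le c+q$ is not needed in this lemma; only $c<\alpha$ is used, to guarantee $\theta>0$.)
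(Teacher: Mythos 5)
Your argument coincides with the paper's: both proofs establish the one-step comparison $\left|u_{x_{0},\rho}-u_{x_{0},\sigma}\right|\le C\,\sigma^{\alpha}\rho^{-c/q}[u]_{q,\alpha q}$ via H\"older's inequality, the nesting $E^{\beta}_{\rho}\subset E^{\beta}_{\sigma}$, the volume scaling $|E^{\beta}_{r}|=|E^{\beta}_{1}|r^{c}$, and the Campanato semi-norm, and then telescope along the dyadic radii $2^{-k}R$ with a final adjustment from $2^{-N}R$ down to $r$; the sum converges because $\theta>0$. Your side remark that only $c<\alpha$ (not the upper bound $\alpha\le c+q$) is used here is also accurate, since the upper bound only controls $\theta\le 1$ later.
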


\begin{proof}
Given $\sigma_{1}, \sigma_{2} \in \left [r, R \right]$ with $\sigma_{1} < \sigma_{2}$ note that
\begin{eqnarray}
\label{lemma decay oscl 1.1}
\vert u_{x_{0}, \sigma_{2}} - u_{x_{0}, \sigma_{1}} \vert & = & \nonumber \left | u_{x_{0}, \sigma_{2}} - \displaystyle \dashint_{E_{\sigma_{1}}(x_{0})}u(x)\, dx  \right | \\ \nonumber
& = & \left |  \displaystyle \dashint_{E_{\sigma_{1}}(x_{0})} \left( u_{x_{0}, \sigma_{2}} - u(x) \right) \, dx  \right | \\
& \leq & \displaystyle \dashint_{E_{\sigma_{1}}(x_{0})} \left| u(x) - u_{x_{0}, \sigma_{2}} \right| \, dx.
\end{eqnarray}
From H\" older Inequality, it follows that
\begin{eqnarray}
\label{lemma decay oscl 1.2}
\displaystyle \dashint_{E_{\sigma_{1}}(x_{0})} \left| u_{x_{0}, \sigma_{2}} - u(x) \right| \, dx & \leq & \dfrac{\left | E_{\sigma_{1}}(x_{0}) \right |^{1 - 1/q }}{\left | E_{\sigma_{1}}(x_{0}) \right |} \left(\int_{E_{\sigma_{1}}(x_{0})} \left| u(x) - u_{x_{0}, \sigma_{2}} \right |^{q} \, dx \right )^{\frac{1}{q}} \\ \nonumber
& = & \left(\dashint_{E_{\sigma_{1}}(x_{0})} \left| u(x) - u_{x_{0}, \sigma_{2}} \right |^{q} \, dx \right )^{\frac{1}{q}}.
\end{eqnarray}
Moreover, taking into account that $E_{\sigma_{1}}(x_{0}) \subset E_{\sigma_{2}}(x_{0})$, we estimate
\begin{eqnarray}
\label{lemma decay oscl 1.3}
\dashint_{E_{\sigma_{1}}(x_{0})} \left| u(x) - u_{x_{0}, \sigma_{2}} \right |^{q} \, dx & = & \dfrac{\sigma^{\alpha q}_{2}}{\left | E_{\sigma_{1}}(x_{0}) \right |\sigma^{\alpha q}_{2}}\int_{E_{\sigma_{1}}(x_{0})} \left| u(x) - u_{x_{0}, \sigma_{2}} \right |^{q} \, dx \\ \nonumber
& \leq &  \dfrac{\sigma^{c + \alpha q}_{2}}{C\sigma^{c}_{1}}\left( \sigma^{-\alpha q}_{2} \int_{E_{\sigma_{2}}(x_{0})} \left| u(x) - u_{x_{0}, \sigma_{2}} \right |^{q} \, dx \right).
\end{eqnarray}
Combining \eqref{lemma decay oscl 1.1}, \eqref{lemma decay oscl 1.2}, and \eqref{lemma decay oscl 1.3}, we deduce that
\begin{eqnarray}
\label{lemma decay oscl 1}
\vert u_{x_{0}, \sigma_{2}} - u_{x_{0}, \sigma_{1}} \vert & \leq & C^{-1} \left( \sigma^{-\frac{c}{q}}_{1}\sigma^{\alpha}_{2}\right) \left[ u \right]_{q, \alpha q} = C^{-1} \left[ \sigma^{\theta}_{1}\left(\dfrac{\sigma_{2}}{\sigma_{1}}\right)^{\alpha} \right] \left[ u \right]_{q, \alpha q}.
\end{eqnarray}
Putting $\sigma_{1} = 2^{-k}R$ and $\sigma_{2} = 2^{-k+1}R$ in \eqref{lemma decay oscl 1} we get
\begin{eqnarray}
\label{lemma decay oscl 2}
\vert u_{x_{0}, R} - u_{x_{0}, 2^{-k}R} \vert & \leq & R^{\theta}\left(2^{\alpha}C^{-1} \left[ u \right]_{q, \alpha q} \right)\left( \sum_{m=1}^{k}2^{-m \theta} \right) \\ \nonumber
& \leq & C_{2}\left[ u \right]_{q, \alpha q}R^{\theta},
\end{eqnarray}
where $C_{2} = \dfrac{C^{-1}}{2^{\theta} -1}$. Choose $k$ such that $2^{-k}R \leq r < 2^{-k+1}R$. Then, from \eqref{lemma decay oscl 2} we obtain
\begin{eqnarray}
\label{lemma decay oscl 3}
\vert u_{x_{0}, r} - u_{x_{0}, 2^{-k}R} \vert  \leq  C_{2}\left[ u \right]_{q, \alpha q}r^{\theta} \leq C_{2}\left[ u \right]_{q, \alpha q}R^{\theta}.
\end{eqnarray}
Combining \eqref{lemma decay oscl 2} and \eqref{lemma decay oscl 3} the result is proved.
\end{proof}

We now establish the non-trivial connection between spaces $H_{\beta}^{\alpha, q}(\mathbb{R}^{n})$ and $W_{\beta}^{\alpha, q}(\mathbb{R}^{n})$ for $c_{\beta} = n$.
\begin{lemma}
\label{Main. Lemma}
Given $\beta=\left( b_{1},\dots, b_{n} \right) \in \mathbb{R}^{n}_{+}$ and $1 < q < +\infty$, we have:
\begin{eqnarray}
\label{Main. Lemma 1}
L^{\beta}_{q}(\mathbb{R}^{n}) = H_{r_{\beta^{}}}^{\frac{2n}{c_{\beta}}, q}(\mathbb{R}^{n}) \quad \text{and} \quad H_{r_{\beta^{}}}^{\frac{n}{c_{\beta}} \alpha, q}(\mathbb{R}^{n}) \hookrightarrow W_{\beta}^{\alpha, q}(\mathbb{R}^{n}),
\end{eqnarray}
for all $0 < \alpha < 2/b_{\max}$. In particular, if $\beta$ is such that $c_{\beta} = n$ and $2 \leq q < +\infty$, we get
$$
H_{r_{\beta^{}}}^{\alpha, q}(\mathbb{R}^{n}) \hookrightarrow W_{\beta}^{\alpha, q}(\mathbb{R}^{n}).
$$
\end{lemma}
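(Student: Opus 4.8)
The strategy is to obtain the lemma as a concatenation of three results already recorded in the introduction: Lizorkin's identification \eqref{LIZORKIN} of the anisotropic fractional Sobolev space with an anisotropic Bessel space (from \cite{LIZORKIN 2}), Lizorkin's Sobolev-type embedding $L_q^\beta(\mathbb{R}^n)\hookrightarrow B_q^\beta(\mathbb{R}^n)$ of \cite{LIZORKIN 3}, and the equality $W_\beta^{\alpha,q}(\mathbb{R}^n)=B_q^{\beta(\alpha)}(\mathbb{R}^n)$ established in Lemma 2.8 of \cite{LEITAO LP} for $0<\alpha<2/b_{\max}$. The only computational ingredient is a scaling bookkeeping: replacing $\beta$ by the dilated vector $\beta(\alpha)=(\alpha b_1/2,\dots,\alpha b_n/2)$ rescales the anisotropic dimension $c_\beta$ but leaves the Calder\'on--Besov--Lizorkin distance $r_\beta$ unchanged, and this makes all the smoothness indices match up.

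First, the equality $L_q^\beta(\mathbb{R}^n)=H_{r_\beta}^{2n/c_\beta,q}(\mathbb{R}^n)$ is precisely \eqref{LIZORKIN}, so one only has to quote \cite{LIZORKIN 2}. For the embedding $H_{r_\beta}^{\frac{n}{c_\beta}\alpha,q}(\mathbb{R}^n)\hookrightarrow W_\beta^{\alpha,q}(\mathbb{R}^n)$, fix $0<\alpha<2/b_{\max}$, set $\beta(\alpha)=(\alpha b_1/2,\dots,\alpha b_n/2)\in\mathbb{R}^n_+$, and apply \eqref{LIZORKIN} with $\beta$ replaced by $\beta(\alpha)$. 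Two elementary identities carry the argument: $c_{\beta(\alpha)}=\sum_{i=1}^n\frac{2}{\alpha b_i/2}=\frac{2}{\alpha}\sum_{i=1}^n\frac{2}{b_i}=\frac{2}{\alpha}c_\beta$, so that $\frac{2n}{c_{\beta(\alpha)}}=\frac{n}{c_\beta}\alpha$; and $\beta(\alpha)^\ast=\frac{c_{\beta(\alpha)}}{n}\beta(\alpha)=\frac{2c_\beta/\alpha}{n}\cdot\frac{\alpha}{2}\beta=\frac{c_\beta}{n}\beta=\beta^\ast$, so the defining condition \eqref{CBL condition} for $r_{\beta(\alpha)}$ coincides with the one for $r_\beta$, whence $r_{\beta(\alpha)}=r_\beta$. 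Hence $L_q^{\beta(\alpha)}(\mathbb{R}^n)=H_{r_\beta}^{\frac{n}{c_\beta}\alpha,q}(\mathbb{R}^n)$. Chaining this with the Sobolev embedding $L_q^{\beta(\alpha)}(\mathbb{R}^n)\hookrightarrow B_q^{\beta(\alpha)}(\mathbb{R}^n)$ of \cite{LIZORKIN 3} --- the step requiring $q\ge 2$, which is why that hypothesis resurfaces in the ``in particular'' clause --- and with $B_q^{\beta(\alpha)}(\mathbb{R}^n)=W_\beta^{\alpha,q}(\mathbb{R}^n)$ from \cite{LEITAO LP}, we obtain the claimed continuous embedding, the constant being inherited from the cited ones.

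Finally, when $c_\beta=n$ the smoothness exponent $\frac{n}{c_\beta}\alpha$ equals $\alpha$, so the second relation in \eqref{Main. Lemma 1} becomes $H_{r_\beta}^{\alpha,q}(\mathbb{R}^n)\hookrightarrow W_\beta^{\alpha,q}(\mathbb{R}^n)$, which is the last assertion. I expect the only delicate point to be the scaling computation above: one must use that $r_\beta$ depends on $\beta$ only through the normalized vector $\beta^\ast$ (so that dilating $\beta$ by the factor $\alpha/2$ does not change $r_\beta$), while $c_\beta$ gets rescaled by exactly $2/\alpha$, so that the composite smoothness index lands on the nose at $\frac{n}{c_\beta}\alpha$; one should also check that the range restrictions ($q\ge 2$ for \cite{LIZORKIN 3}, $0<\alpha<2/b_{\max}$ for \cite{LEITAO LP}) are compatible. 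Everything else is a formal concatenation of the quoted theorems.
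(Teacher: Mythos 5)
Your proposal is correct and follows essentially the same route as the paper's proof: Lizorkin's identification $L_q^\beta = H_{r_\beta}^{2n/c_\beta,q}$, the scaling computation $c_{\beta(\alpha)} = (2/\alpha)c_\beta$ together with $\beta(\alpha)^\ast = \beta^\ast$ (so $r_{\beta(\alpha)} = r_\beta$), Lizorkin's embedding $L_q^{\beta(\alpha)} \hookrightarrow B_q^{\beta(\alpha)}$, and the identification $W_\beta^{\alpha,q} = B_q^{\beta(\alpha)}$. The only cosmetic difference is that the paper re-derives $W_\beta^{\alpha,q} = B_q^{\beta(\alpha)}$ from Proposition 2.1 of \cite{LEITAO LP} with an explicit index calculation and quotes Theorem 9 of \cite{LIZORKIN 2} for the embedding step, while you quote Lemma 2.8 of \cite{LEITAO LP} and \cite{LIZORKIN 3} directly, which the introduction itself offers as the reference for that step.
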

\begin{proof}

Let $\beta^{\ast} = \frac{c_{\beta}}{n}\beta$. By Theorem 1 in \cite{LIZORKIN 2}, we find
\begin{eqnarray}
L^{\beta}_{q}(\mathbb{R}^{n}) = H_{r_{\beta^{}}}^{\alpha^{\ast}, q}(\mathbb{R}^{n}),
\end{eqnarray}
where $\alpha^{\ast}$ and $b^{\ast}_{i}$ satisfy
\begin{eqnarray}
\alpha^{\ast} = \dfrac{n}{\sum_{i=1}^{n}\frac{1}{b_{i}}} = \dfrac{2n}{c_{\beta}} \quad \text{and} \quad  b^{\ast}_{i} = \dfrac{2 b_{i}}{\alpha^{\ast}} = \dfrac{c_{\beta}b_{i}}{n}.
\end{eqnarray}
For the second part, if $\beta(\alpha) = (\alpha b_{1}/ 2, \dots, \alpha b_{n} / 2)$ we get
\begin{eqnarray}
c_{\beta(\alpha)} = \dfrac{2}{\alpha} c_{\beta} \quad \text{and} \quad \frac{c_{\beta(\alpha)}}{n}\beta(\alpha) = \frac{c_{\beta}}{n} \beta = \beta^{\ast}.
\end{eqnarray}
Therefore, from \eqref{Main. Lemma 1}, we obtain
\begin{eqnarray}
L^{\beta(\alpha)}_{q}(\mathbb{R}^{n}) = H_{r_{\beta^{}}}^{\frac{n \alpha}{c_{\beta}}, q}(\mathbb{R}^{n}).
\end{eqnarray}
On the other hand, by Proposition 2.1 in \cite{LEITAO LP} we have
\begin{eqnarray}
W_{\beta}^{\alpha, q}(\mathbb{R}^{n}) = B^{\mu(\alpha, \beta)}_{q},
\end{eqnarray}
where $\mu = (\mu_{1}(\alpha, b_{1}), \dots, \mu_{n}(\alpha, b_{n}))$ is given by $\mu_{i}(\alpha, b_{i}) = \frac{\left[(1 -\nu_{i})q -1 \right]}{q}$ with
\begin{eqnarray}
\label{Main. Lemma 2}
\nu_{i} - 1 = \dfrac{b_{i}}{2}\left[(\nu-1) + \dfrac{1}{q}\left( 1 - \frac{2}{b_{i}}\right)\right] \quad \text{and} \quad (1-\nu)q = q\alpha + 1.
\end{eqnarray}
Moreover, from \eqref{Main. Lemma 2} it follows that
\begin{eqnarray}
\label{Main. Lemma 3}
\left (1- \nu_{i}\right)q & = & \dfrac{b_{i}}{2}\left[(1-\nu)q - \left( 1 - \frac{2}{b_{i}}\right)\right] \\ \nonumber
& = & \dfrac{b_{i}}{2}\left[\alpha q + 1  - \left( 1 - \frac{2}{b_{i}}\right)\right] \\ \nonumber
& = & q \dfrac{\alpha b_{i}}{2} + 1.
\end{eqnarray}
Hence, combining \eqref{Main. Lemma 2} and \eqref{Main. Lemma 3} we find
\begin{eqnarray}
\label{Main. Lemma 4}
W_{\beta}^{\alpha, q}(\mathbb{R}^{n})= B^{\beta(\alpha)}_{q}.
\end{eqnarray}
Finally, from \eqref{Main. Lemma 4} and Theorem 9 in \cite{LIZORKIN 2}, we establish the continuous embedding
\begin{eqnarray}
H_{r_{\beta^{}}}^{\alpha, q}(\mathbb{R}^{n}) \hookrightarrow W_{\beta}^{\alpha, q}(\mathbb{R}^{n}).
\end{eqnarray}
\end{proof}

\subsection{Fundamental solution: ingredients}
\label{sec Fundamental solution: ingredients}
\subsubsection{Viscosity solutions and $L^{p}$-estimates}
In this subsection we collect all the results and concepts from the viscosity solutions theory that we will use to build our anisotropic fundamental solutions. $L^{p}$-estimates are also considered. For more details we refer the reader to \cite{CLU, SL, LEITAO LP, CWW} and references therein.
\begin{definition}
\label{C^{1,1} definition}
A function $\varphi$ is said to be $C^{1,1}$ at the point $x$, and we write $\varphi\in C^{1,1}\left( x \right)$, if there is a vector $v \in \mathbb{R}^{n}$ and numbers $M, \eta_{0} >0$ such that
$$
\vert \varphi \left( x + y\right) - \varphi \left( x \right) -  v \cdot y   \vert \leq M \vert y \vert^{2},
$$
for $\vert  x \vert < \eta_{0}$. We say that a function $\varphi$ is $C^{1,1}$ in a set $\Omega$, and we denote $\varphi \in C^{1,1}\left( \Omega \right)$, if the previous holds at every
point, with a uniform constant $M$.
\end{definition}

Note that for $u \in C^{1,1}\left( x \right) \cap L^{1}\left( \mathbb{R}^{n}, w \right)$ the value of $\Delta^{\beta, \alpha}u(x)$ is well defined, where the anisotropic weight $w: \mathbb{R}^{n} \rightarrow \mathbb{R}$ is given by
\begin{eqnarray}
w(x) = \dfrac{1}{1 + \Vert x \Vert^{c+2\alpha}}.
\end{eqnarray}

Next we introduce the notion of viscosity supersolution (and subsolution) $u$ in a domain $\Omega$, with $C^{2}$ test functions that touch $u$ from above or from below.

\begin{definition}
Let $f$ be a bounded and continuous function in $\mathbb{R}^{n}$. A function $u:\mathbb{R}^{n} \rightarrow \mathbb{R}$, upper (lower) semicontinuous in $\overline{\Omega}$, is said to be a subpersolution (subsolution) to equation $\Delta^{\beta, \alpha}u = f$, and we write $\Delta^{\beta, \alpha}u \geq f$ ($\Delta^{\beta, \alpha}u \leq f$), if whenever the following happen:
\begin{enumerate}
\item $x_{0} \in \Omega$ is any point in $\Omega$;
\medskip
\item $B_{r}\left( x_{0} \right) \subset \Omega$, for some $r>0$;
\medskip
\item $\varphi \in C^{2}\left(\overline{B_{r}\left( x_{0} \right)} \right)$;
\medskip
\item $\varphi \left( x_{0} \right) = u\left( x_{0} \right)$;
\medskip
\item $\varphi \left( y \right) > u\left( y \right)$ ($\varphi \left( y \right) < u\left( y \right)$) for every $y \in B_{r}\left( x_{0} \right) \setminus \left\lbrace x_{0} \right\rbrace$;
\end{enumerate}
then, if we let
$$
v := \left \{
\begin{array}{lll}
\varphi , & \text{ in } & B_{r}\left( x_{0} \right) \\
u & \text{ in } & \mathbb{R}^{n} \setminus  B_{r}\left( x_{0} \right),
\end{array}
\right.
$$
we have $\Delta^{\beta, \alpha}v\left( x_{0} \right) \geq f\left( x_{0} \right)$ ($\Delta^{\beta, \alpha}v\left( x_{0} \right) \leq f\left( x_{0} \right)$).
\end{definition}

Since we use a strategy inspired by Perron's method to reach anisotropic fundamental solutions for $\Delta^{\beta, \alpha}$, it is natural that tools as stability property, comparison principle, Harnack inequality, and maximum principle be required. More precisely, we make use of the following results:

\begin{lemma}[Stability property]
\label{stability prop}
Let $u_{k}$ be a sequence of continuous functions in an open set $\Omega$ and uniformly bounded in $\mathbb{R}^{n}$. Assume that:
\begin{enumerate}
\item $\Delta^{\beta, \alpha}u = f_{k}$ in $\Omega$.
\item $u_{k} \rightarrow u$ locally uniformly in $\Omega$ and $u_{k} \rightarrow u$ a.e. in $\mathbb{R}^{n}$.
\item $f_{k} \rightarrow f$ locally uniformly in $\Omega$.
\end{enumerate}
Then $\Delta^{\beta, \alpha}u = f$ in $\Omega$.
\end{lemma}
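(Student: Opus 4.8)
The plan is to prove separately that $u$ is a viscosity subsolution, $\Delta^{\beta,\alpha}u\ge f$ in $\Omega$, and a viscosity supersolution, $\Delta^{\beta,\alpha}u\le f$ in $\Omega$. Since $\Delta^{\beta,\alpha}$ is linear, the second statement follows from the first applied to $-u_k$, $-f_k$ (which again satisfy all the hypotheses), so it is enough to treat the subsolution case. Note first that $u$ is continuous in $\Omega$, being a locally uniform limit of continuous functions, and that $u$ is bounded and measurable on $\mathbb{R}^n$ since $\sup_k\Vert u_k\Vert_{L^\infty(\mathbb{R}^n)}<\infty$; hence $u$ is an admissible competitor and all the integrals below converge absolutely away from the diagonal.

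Fix $x_0\in\Omega$, a radius $r$ with $B_r(x_0)\subset\Omega$, and $\varphi\in C^2(\overline{B_r(x_0)})$ touching $u$ from above at $x_0$ and strictly in $B_r(x_0)\setminus\{x_0\}$; let $v:=\varphi$ on $B_r(x_0)$ and $v:=u$ elsewhere, so the goal is $\Delta^{\beta,\alpha}v(x_0)\ge f(x_0)$. For $s\in(0,r)$ put $m_k:=\max_{\overline{B_s(x_0)}}(u_k-\varphi)$, attained at some $x_k\in\overline{B_s(x_0)}$. Because $u_k\to u$ uniformly on $\overline{B_s(x_0)}$ and $u-\varphi$ attains its maximum, equal to $0$, on $\overline{B_s(x_0)}$ only at $x_0$, the usual maximizer-convergence argument gives $x_k\to x_0$ and $m_k\to 0$; in particular $x_k\in B_s(x_0)$ for $k$ large. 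Now fix $\rho\in(0,s)$ and $\varepsilon>0$. For $k$ large, $\overline{B_\rho(x_k)}\subset B_s(x_0)$ and, since $u_k-\varphi\le m_k$ on $\overline{B_s(x_0)}$, the function $\varphi_\varepsilon:=\varphi+m_k+\varepsilon\vert\cdot-x_k\vert^2$ belongs to $C^2(\overline{B_\rho(x_k)})$ and touches $u_k$ from above at $x_k$, strictly in $B_\rho(x_k)\setminus\{x_k\}$. As $u_k$ is a viscosity subsolution, taking $v_k:=\varphi_\varepsilon$ on $B_\rho(x_k)$ and $v_k:=u_k$ elsewhere yields $\Delta^{\beta,\alpha}v_k(x_k)\ge f_k(x_k)$.

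The core of the argument, and the step I expect to be the main obstacle, is to pass to the limit in this inequality, keeping track of the nonlocal integral. Using that $\Vert\cdot\Vert_\beta$ is even, write $\Delta^{\beta,\alpha}v_k(x_k)$ as the sum of an inner term over $B_\rho(x_k)$, which involves only $\varphi_\varepsilon$ and equals $\tfrac{C_{\beta,\alpha}}{2}\int_{\vert w\vert<\rho}\big(2\varphi(x_k)-\varphi(x_k+w)-\varphi(x_k-w)-2\varepsilon\vert w\vert^2\big)\,\Vert w\Vert_\beta^{-(c+2\alpha)}\,dw$, and an outer term over $\mathbb{R}^n\setminus B_\rho(x_k)$, which involves only $u_k$ and the value $u_k(x_k)$. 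For the inner term the integrand is dominated by $(\Vert D^2\varphi\Vert_\infty+2\varepsilon)\,\vert w\vert^2\,\Vert w\Vert_\beta^{-(c+2\alpha)}$, which is integrable near the origin \emph{exactly because} $\alpha<2/b_{\max}$: by the anisotropic scaling, $T_{\beta,\kappa}$ has Jacobian $\kappa^{c}$ and $\vert T_{\beta,\kappa}w\vert^2\lesssim\kappa^{4/b_{\max}}\vert w\vert^2$ for $\kappa<1$ (together with Lemma~\ref{elemntary results} to replace Euclidean by anisotropic balls), so the radial integral behaves like $\int_0\kappa^{4/b_{\max}-2\alpha-1}\,d\kappa$; since $\varphi\in C^2$ and $x_k\to x_0$, dominated convergence sends the inner term to the same expression with $x_k$ replaced by $x_0$. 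For the outer term, the contribution of $u_k(x_k)$ equals $C_{\beta,\alpha}\,u_k(x_k)\int_{\vert w\vert\ge\rho}\Vert w\Vert_\beta^{-(c+2\alpha)}\,dw$, a finite constant independent of $x_k$ (integrable at infinity since $\alpha>0$) times $u_k(x_k)\to u(x_0)$, while the contribution of $u_k(\cdot)$ is treated by dominated convergence using $u_k\to u$ a.e.\ in $\mathbb{R}^n$, the uniform bound $\sup_k\Vert u_k\Vert_{L^\infty}$, $x_k\to x_0$, and a $k$-independent majorant built from $\Vert z-x_0\Vert_\beta^{-(c+2\alpha)}$ on $\{\vert z-x_0\vert>\rho/2\}$; thus the outer term tends to $C_{\beta,\alpha}\int_{\mathbb{R}^n\setminus B_\rho(x_0)}\big(u(x_0)-u(z)\big)\Vert x_0-z\Vert_\beta^{-(c+2\alpha)}\,dz$. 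Combining these with $f_k(x_k)\to f(x_0)$ (local uniform convergence of $f_k$ and continuity of $f$), and letting first $k\to\infty$ and then $\varepsilon\to 0$, we obtain $\Delta^{\beta,\alpha}v^\rho(x_0)\ge f(x_0)$, where $v^\rho:=\varphi$ on $B_\rho(x_0)$ and $v^\rho:=u$ elsewhere. Since $s\in(0,r)$ and $\rho\in(0,s)$ were arbitrary, this holds for every $\rho\in(0,r)$; finally $\Delta^{\beta,\alpha}v^\rho(x_0)\to\Delta^{\beta,\alpha}v(x_0)$ as $\rho\uparrow r$, because the difference $\Delta^{\beta,\alpha}v(x_0)-\Delta^{\beta,\alpha}v^\rho(x_0)$ equals $-C_{\beta,\alpha}\int_{B_r(x_0)\setminus B_\rho(x_0)}\big(\varphi(z)-u(z)\big)\Vert x_0-z\Vert_\beta^{-(c+2\alpha)}\,dz$, whose integrand is bounded on the annulus (which stays away from $x_0$) while $\vert B_r(x_0)\setminus B_\rho(x_0)\vert\to0$. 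Hence $\Delta^{\beta,\alpha}v(x_0)\ge f(x_0)$, which finishes the subsolution case and, by the reduction above, the lemma. The delicate points are precisely this dominated-convergence bookkeeping for the nonlocal kernel — the uniform integrability near $x_0$, where $\alpha<2/b_{\max}$ is indispensable — and the handling of the moving hole $B_\rho(x_k)$ with only a.e.\ information outside $\Omega$.
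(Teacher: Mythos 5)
Your proof is correct. The paper does not actually prove Lemma~\ref{stability prop}: it is stated in the subsection that ``collects'' background tools, with the reader referred to \cite{CLU, SL, LEITAO LP, CWW} and references therein, so there is no in-paper argument to compare against. What you give is the standard Caffarelli--Silvestre stability argument for nonlocal equations, transported to the anisotropic kernel, and it is sound: the reduction to one inequality by linearity, the maximizer-convergence step $x_k\to x_0$, $m_k\to 0$, the quadratic correction $\varepsilon\vert\cdot-x_k\vert^2$, and the split into an inner integral (controlled by $\Vert D^2\varphi\Vert_\infty$ and the near-origin integrability $\int_{B_\rho}\vert w\vert^2\,\Vert w\Vert_\beta^{-(c+2\alpha)}\,dw<\infty$, which is exactly the first bound in Lemma~\ref{scaling properties lemma} and is where $\alpha<2/b_{\max}$ enters) plus an outer integral (handled by dominated convergence, using uniform boundedness, a.e.\ convergence, and a $k$-independent majorant, for which the quasi-triangle inequality with constant $C_0$ from Section~\ref{sec Fundamental solution: ingredients} supplies the needed comparison $\Vert x_k-y\Vert_\beta\gtrsim\Vert x_0-y\Vert_\beta$ once $\Vert x_k-x_0\Vert_\beta$ is small) are all correct. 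The final $\rho\uparrow r$ step is also fine, though strictly speaking unnecessary: once one has $\Delta^{\beta,\alpha}v^\rho(x_0)\ge f(x_0)$ for every $\rho<r$, that already verifies the viscosity inequality at $x_0$ because the definition quantifies over all admissible radii. One small terminological mismatch worth noting: the paper's Definition calls the case ``$\varphi>u$ and $\Delta^{\beta,\alpha}v(x_0)\ge f(x_0)$'' a \emph{supersolution} (misprinted as ``subpersolution''), whereas you call that half the subsolution case; since you prove both halves by symmetry, this has no effect on the correctness of the argument.
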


\begin{lemma}[Comparison principle]
\label{Comp. princ.}
Let $\Omega$ be a bounded open set and $u$ and $v$ be two bounded functions in $\mathbb{R}^{n}$ such that
\begin{enumerate}
\item $u$ is upper-semicontinuous and $v$ is lower-semicontinuous in $\overline{\Omega}$;
\item $u \leq v$ in $\mathbb{R}^{n} \setminus \Omega$.
\item $\Delta^{\beta, \alpha}u \geq f$ and $\Delta^{\beta, \alpha}v \leq f$ in the viscosity sense in $\Omega$ for a continuous function $f$.
\end{enumerate}
Then $u \leq v$ in $\Omega$.
\end{lemma}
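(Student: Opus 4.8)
The plan is to run the classical positive-maximum argument for elliptic comparison, adapted to the nonlocal, anisotropic operator $\Delta^{\beta,\alpha}$. Assume, towards a contradiction, that $M:=\sup_{\Omega}(u-v)>0$. Since $u-v$ is upper semicontinuous on the compact set $\overline{\Omega}$, the supremum is attained at some $x_{0}\in\overline{\Omega}$; because $\partial\Omega\subset\R^{n}\setminus\Omega$ and $u\le v$ there by hypothesis (2), in fact $x_{0}\in\Omega$. Two features of $x_{0}$ will be exploited: since $u\le v$ on $\R^{n}\setminus\Omega$ we have $\sup_{\R^{n}}(u-v)=\sup_{\Omega}(u-v)=M$, so $x_{0}$ is a \emph{global} maximum point of $u-v$; and since $\R^{n}\setminus\Omega$ has infinite measure and $u-v\le 0$ there, $u-v$ is not a.e.\ equal to the constant $M$.

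Suppose first, only to isolate the mechanism, that $u,v$ were regular enough for $\Delta^{\beta,\alpha}(u-v)(x_{0})$ to be computed classically. On one hand, the viscosity inequalities satisfied by $u$ and $v$ together with the linearity of $\Delta^{\beta,\alpha}$ give $\Delta^{\beta,\alpha}(u-v)(x_{0})\le 0$ (this is the sense in which $u$ being a subsolution and $v$ a supersolution of $\Delta^{\beta,\alpha}\,\cdot=f$ combine). On the other hand, using that $x_{0}$ is a global maximum of $u-v$ and that $\Vert\cdot\Vert_{\beta}$ is symmetric,
\[
\Delta^{\beta,\alpha}(u-v)(x_{0})=C_{\beta,\alpha}\int_{\R^{n}}\frac{(u-v)(x_{0})-(u-v)(y)}{\Vert x_{0}-y\Vert_{\beta}^{c+2\alpha}}\,dy\ \ge\ C_{\beta,\alpha}\,M\int_{\R^{n}\setminus\Omega}\frac{dy}{\Vert x_{0}-y\Vert_{\beta}^{c+2\alpha}}=:\kappa ,
\]
because the integrand is nonnegative on all of $\R^{n}$ and at least $M\Vert x_{0}-y\Vert_{\beta}^{-c-2\alpha}$ on $\R^{n}\setminus\Omega$. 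Here $\kappa>0$ is \emph{finite}: $\Omega$ is bounded and $x_{0}$ is interior, so $\Vert x_{0}-y\Vert_{\beta}$ is bounded away from $0$ on $\R^{n}\setminus\Omega$, while at infinity $\int_{\Vert y\Vert_{\beta}>R}\Vert y\Vert_{\beta}^{-c-2\alpha}\,dy\sim\int_{R}^{\infty}\rho^{-2\alpha-1}\,d\rho<\infty$ precisely because the kernel exponent $c+2\alpha$ exceeds the anisotropic dimension $c=c_{\beta}$ (use $|\Theta_{\rho}^{\beta}|\simeq\rho^{c}$ and Lemma \ref{elemntary results}(1)). This contradicts $\Delta^{\beta,\alpha}(u-v)(x_{0})\le 0$, and proves the lemma in the smooth case.

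To dispense with regularity I would invoke the by now standard sup-/inf-convolution regularization used for anisotropic nonlocal equations in \cite{CLU, SL, CWW}: set $u^{\varepsilon}(x)=\sup_{z}\big(u(z)-\tfrac{1}{2\varepsilon}|x-z|^{2}\big)$ and $v_{\varepsilon}(x)=\inf_{z}\big(v(z)+\tfrac{1}{2\varepsilon}|x-z|^{2}\big)$. Then $u^{\varepsilon}$ is semiconvex, $v_{\varepsilon}$ semiconcave, $u^{\varepsilon}\downarrow u$, $v_{\varepsilon}\uparrow v$; the difference $u^{\varepsilon}-v_{\varepsilon}$ attains its global maximum $M_{\varepsilon}\to M$ at a point $x_{\varepsilon}\to x_{0}\in\Omega$; and, up to an error $\omega(\varepsilon)\to 0$ governed by the modulus of continuity of $f$, one still has $\Delta^{\beta,\alpha}u^{\varepsilon}\le f+\omega(\varepsilon)$ and $\Delta^{\beta,\alpha}v_{\varepsilon}\ge f-\omega(\varepsilon)$ near $x_{0}$. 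Being semiconvex with an interior maximum at $x_{\varepsilon}$, $u^{\varepsilon}-v_{\varepsilon}$ is $C^{1,1}$ at $x_{\varepsilon}$ (with vanishing gradient), so $\Delta^{\beta,\alpha}(u^{\varepsilon}-v_{\varepsilon})(x_{\varepsilon})$ is classically defined and the chain of inequalities above applies verbatim at $x_{\varepsilon}$, yielding $\kappa_{\varepsilon}\le\Delta^{\beta,\alpha}(u^{\varepsilon}-v_{\varepsilon})(x_{\varepsilon})\le 2\,\omega(\varepsilon)$. Since $\liminf_{\varepsilon\to 0}\kappa_{\varepsilon}\ge C_{\beta,\alpha}\tfrac{M}{2}\int_{A}\Vert x_{0}-y\Vert_{\beta}^{-c-2\alpha}\,dy>0$ for a fixed small ball $A\subset\R^{n}\setminus\Omega$ on which $u^{\varepsilon}-v_{\varepsilon}\le M/2$ for all small $\varepsilon$ (such $A$ exists since $u-v\le 0$ there), letting $\varepsilon\to 0$ gives the contradiction, whence $\sup_{\Omega}(u-v)\le 0$.

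The topological reduction to an interior positive maximum and the regularization estimates are routine and already available in the anisotropic framework of \cite{CLU, SL}. The step I expect to be the main obstacle is the interplay between the regularization and the \emph{nonlocal tail}: one must verify that the maximum persists as a global maximum of $u^{\varepsilon}-v_{\varepsilon}$, that the viscosity inequalities are inherited by $u^{\varepsilon},v_{\varepsilon}$ with a controllable error despite the global dependence of $\Delta^{\beta,\alpha}$, and — most importantly — that the positive contribution of $\R^{n}\setminus\Omega$ through the singular kernel $\Vert\cdot\Vert_{\beta}^{-c-2\alpha}$ stays bounded below uniformly in $\varepsilon$; this is exactly where the hypotheses $\alpha>0$ (so the kernel is genuinely singular and $c+2\alpha>c_{\beta}$) and $\Omega$ bounded are used. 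Alternatively, the statement is the linear special case of the comparison principle for anisotropic fully nonlinear integro-differential operators established in \cite{CLU, SL}, from which it could simply be quoted.
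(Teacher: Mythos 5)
You correctly observe at the end that the paper itself proves nothing here: Lemma~\ref{Comp. princ.} is stated without proof and collected from \cite{CLU, SL, CWW} at the head of Section 2.2, so your task was to supply the argument the paper elides. You do so by the standard route — interior positive maximum of $u-v$, lower bound on $\Delta^{\beta,\alpha}(u-v)(x_{0})$ from the nonlocal tail over $\mathbb{R}^{n}\setminus\Omega$, then sup-/inf-convolutions to dispense with regularity — and the mechanism and the places you flag as delicate (inheritance of the viscosity inequalities, persistence of the tail lower bound) are the right ones.

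There is, however, a sign discrepancy between your argument and the hypotheses as printed that you should make explicit rather than leave implicit. Your crucial step is $\Delta^{\beta,\alpha}(u-v)(x_{0})\le 0$, but hypothesis (3) as stated reads $\Delta^{\beta,\alpha}u\ge f$ and $\Delta^{\beta,\alpha}v\le f$, which subtract to $\Delta^{\beta,\alpha}(u-v)(x_{0})\ge 0$ — the same sign as your tail estimate $\kappa>0$, hence no contradiction. The inequalities you actually use (and the only ones under which the lemma is true) are $\Delta^{\beta,\alpha}u\le f$ and $\Delta^{\beta,\alpha}v\ge f$, which is precisely what you write when you regularize: $\Delta^{\beta,\alpha}u^{\varepsilon}\le f+\omega(\varepsilon)$ and $\Delta^{\beta,\alpha}v_{\varepsilon}\ge f-\omega(\varepsilon)$. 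Since $\Delta^{\beta,\alpha}u(x)=C_{\beta,\alpha}\int\frac{u(x)-u(y)}{\Vert x-y\Vert_{\beta}^{c_{\beta}+2\alpha}}\,dy$ with $C_{\beta,\alpha}=2/b_{\max}-\alpha>0$ is $\ge 0$ at a global maximum and $\le 0$ at a global minimum — i.e.\ it carries the sign of $(-\Delta)^{s}$ — the upper-semicontinuous (subsolution) side must satisfy $\Delta^{\beta,\alpha}u\le f$. That convention is the one used consistently elsewhere in the paper, e.g.\ in Lemma~\ref{max principle} and in the proof of Corollary~\ref{max princ 1}, where $\Delta^{\beta,\alpha}u(x_{2})\le 0$ at the negative minimum $x_{2}$. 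So the inequality signs in Lemma~\ref{Comp. princ.} (and in the Definition preceding it) appear to be transposed; you have silently corrected them, which is fine, but you should state that you are proving the statement with $\Delta^{\beta,\alpha}u\le f\le\Delta^{\beta,\alpha}v$ rather than inheriting the printed hypotheses verbatim.
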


\begin{theorem}[Harnack Inequality]
\label{Harnack Inequality}
Assume that $u$ is a non-negative viscosity solution of $\Delta^{\beta, \alpha}u = 0$ in $\Theta_{2}$. Suppose that $ \alpha_{0} < \alpha < \frac{2}{b_{\max}}$, for some $\alpha_{0} >0$. Then
$$u \leq C u \left( 0 \right) \quad  in \ \ \Theta_{\frac{1}{2}},$$
where $C=C(\beta, n, \alpha_{}) > 0$ is a constant.
\end{theorem}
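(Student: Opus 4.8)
\emph{Proof strategy.} The plan is to reproduce, in the geometry dictated by $\Vert \cdot \Vert_{\beta}$, the scheme Caffarelli and Silvestre devised for translation-invariant integro-differential operators and that Cao, Wu and Wang used in \cite{CWW}: construct a barrier (special function), deduce from it a point/measure estimate via the comparison principle, iterate that estimate by an anisotropic Calder\'on--Zygmund decomposition to get power decay of the super-level sets of $u$, and finally pass from this decay to the pointwise bound. The hypothesis $\alpha_{0} < \alpha < 2/b_{\max}$ is precisely the range in which Lemma \ref{Bar. func.} keeps $\Delta^{\beta, \alpha}$ non-degenerate on the anisotropic sphere, which is what gives all barrier-based estimates a finite constant depending on $\beta$, $n$ and $\alpha$.

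First I would normalize and reduce. If $u(0) = 0$, then $0$ is an interior minimum and the strong maximum principle mentioned above forces $u \equiv 0$ in $\mathbb{R}^{n}$, so the claimed inequality holds trivially; otherwise, replacing $u$ by $u/u(0)$, it suffices to exhibit $C = C(\beta, n, \alpha)$ with $\sup_{\Theta_{1/2}} u \leq C$ whenever $u \geq 0$, $\Delta^{\beta, \alpha}u = 0$ in $\Theta_{2}$ and $u(0) = 1$. Using the intrinsic scaling $T_{\beta, \kappa}$ --- under which $\Delta^{\beta, \alpha}$ and the ellipsoids $E^{\beta}_{r}$ transform homogeneously --- and the equivalence of $\Vert \cdot \Vert_{\beta}$ with the Euclidean metric on compact sets (Lemma \ref{elemntary results}), this becomes equivalent to a decay estimate for the distribution function of $u$: there are universal $\varepsilon_{0} > 0$ and $C_{0} > 0$ with $\bigl| \{ u > t \} \cap \Theta_{1} \bigr| \leq C_{0}\, t^{-\varepsilon_{0}}$ for every $t > 0$.

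Next I would build the anisotropic barrier, following the construction sketched in the introduction and in \cite{LEITAO LP}: dilate the exponents $b_{i} \mapsto \mu b_{i}$ with $\mu$ large and form $\Phi$, radial in $\Vert \cdot \Vert_{\mu\beta}$, with $\Phi \leq 0$ outside a fixed ball $\Theta^{\mu\beta}_{R_{0}}$, $\Phi \geq 1$ on $\overline{\Theta^{\mu\beta}_{r_{0}}}$, and $\Delta^{\beta, \alpha}\Phi \leq C_{1}\psi$ pointwise in $\mathbb{R}^{n}$ for a nonnegative bump $\psi$ supported in a small anisotropic ball about the origin. Evaluating $\Delta^{\beta, \alpha}\Phi$ as a principal-value integral hinges on the universal lower bound for the anisotropic convexity of $\Phi$ outside thin anisotropic cones and on the comparison between anisotropic and isotropic cones from Lemma \ref{Bar. func.}; this is exactly what keeps the factor $C_{\beta, \alpha} = 2/b_{\max} - \alpha$ from annihilating the estimate near $\alpha = 2/b_{\max}$. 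Comparing $\Phi$ with $u$ on $\Theta_{1}$ through Lemma \ref{Comp. princ.} then yields the seed measure estimate: there are $M > 1$ and $\mu_{0} \in (0,1)$ such that $u(0) \leq 1$ forces $\bigl| \{ u \leq M \} \cap \Theta_{1/2} \bigr| \geq \mu_{0}\, |\Theta_{1/2}|$. Iterating this on a dyadic family of ellipsoids via the Caffarelli--Calder\'on covering lemma (Lemma 3 in \cite{CC}, or Lemma 3.6 in \cite{CLU}), just as in the classical Calder\'on--Zygmund iteration, produces the claimed power decay of $\bigl| \{ u > t \} \cap \Theta_{1} \bigr|$.

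Finally I would promote the measure decay to the pointwise bound. Were $u$ unbounded on $\Theta_{1/4}$, rescaling around a point of large value via $T_{\beta, \kappa}$ and applying the decay estimate to the rescaled solution would contradict the measure bound; the non-local tail causes no trouble because $u \geq 0$ on all of $\mathbb{R}^{n}$ and the kernel $\Vert y \Vert_{\beta}^{-(c + 2\alpha)}$ is integrable at infinity, so the tail enters only additively as a controlled constant. This gives $\sup_{\Theta_{1/4}} u \leq C$, and a finite chain of overlapping anisotropic balls joining an arbitrary point of $\Theta_{1/2}$ to the origin, together with the stability property (Lemma \ref{stability prop}) and another use of the comparison principle, enlarges the good set to $\Theta_{1/2}$ with a constant depending only on $\beta$, $n$ and $\alpha$. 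I expect the real difficulty to sit in the barrier step: estimating the principal-value integral defining $\Delta^{\beta, \alpha}\Phi$ near the origin and near $\partial\Theta^{\mu\beta}_{R_{0}}$ while controlling the contribution of the directions in which $\Phi$ fails to be convex. Everything downstream is a faithful transcription of the isotropic argument, with Euclidean balls replaced by $\Theta^{\beta}_{r}$ and $E^{\beta}_{r}$ and with the Caffarelli--Calder\'on lemma in the role of Calder\'on--Zygmund.
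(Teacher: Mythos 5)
The paper itself does not prove this theorem. The Harnack inequality is stated without proof, alongside the stability property, the comparison principle, the Schauder estimate and the $L^{q}$-estimate, as one of the results imported from the anisotropic viscosity-solution theory of \cite{CLU} and \cite{SL}; the subsection in which it appears opens by referring the reader there ``for more details.'' There is therefore no in-paper argument to compare against. Your sketch is a reasonable reconstruction of the Caffarelli--Silvestre / Krylov--Safonov scheme that those references adapt to the anisotropic setting: construct a barrier, get a point-to-measure estimate by comparison, iterate it via the Caffarelli--Calder\'on covering lemma to obtain power decay of $\bigl|\{u>t\}\cap\Theta_{1}\bigr|$, and then promote that decay to an $L^{\infty}$ bound by a rescaling argument.

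Two caveats if you wanted to turn the sketch into a proof. First, Lemma \ref{Bar. func.} builds the $-\gamma$-homogeneous barrier $\Vert x\Vert_{\tilde\beta}^{-\tilde\gamma}$ needed later for the fundamental-solution construction; the Harnack argument needs a different, truncated barrier of the type $\Phi\approx\min\bigl(M,\Vert x\Vert_{\mu\beta}^{-\nu}\bigr)$ with $\nu$ large, whose nonlocal image is bounded by a compactly supported bump. The same anisotropic convexity and cone estimates that make Lemma \ref{Bar. func.} work also control $\Delta^{\beta,\alpha}\Phi$ for this barrier, but it is a separate computation rather than a direct invocation of that lemma. Second, the passage from measure decay to the pointwise bound requires $u\ge 0$ on all of $\mathbb{R}^{n}$, not merely on $\Theta_{2}$: otherwise the nonlocal tail can be arranged to break the Harnack inequality. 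You correctly assume global nonnegativity in the final paragraph; the hypothesis in the statement must be read that way, and it is worth saying so explicitly.
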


\begin{lemma}[Maximum principle]
\label{max principle}
Let $\beta \in \mathbb{R}^{n}_{+}$. Assume that $u \in C^{0}_{}\left(\mathbb{R}^{n}\setminus \left\lbrace 0 \right\rbrace \right) \cap L^{1}_{loc}\left(\mathbb{R}^{n}\right)$ and
$$
\displaystyle\limsup_{\Vert x \Vert \rightarrow 0} u(x) \leq 0.
$$
If $u$ satisfies
\begin{eqnarray}
\Delta^{\beta, \alpha} u \leq 0 \quad \text{in} \ \mathbb{R}^{n}\setminus \left\lbrace 0 \right\rbrace,
\end{eqnarray}
then $u \leq 0$ in $\mathbb{R}^{n}\setminus \left\lbrace 0 \right\rbrace$.
\end{lemma}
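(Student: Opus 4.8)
The plan is to follow the classical strategy for the maximum principle for the fractional Laplacian (as in \cite{CWW}), but carried out in the anisotropic geometry generated by $\Vert \cdot \Vert_\beta$. The idea is a contradiction argument: assume $u$ is positive somewhere in $\mathbb{R}^n \setminus \{0\}$; by the hypothesis $\limsup_{\Vert x \Vert \to 0} u(x) \leq 0$ together with continuity away from the origin, we may try to produce a test function touching $u$ from above at an interior maximum point and derive a sign contradiction from $\Delta^{\beta,\alpha} u \leq 0$.

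\medskip

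\noindent\textbf{Step 1 (reduction to a bounded region and interior maximum).} First I would argue that it suffices to rule out $\sup_{\mathbb{R}^n\setminus\{0\}} u > 0$. Because $u$ is continuous on $\mathbb{R}^n \setminus \{0\}$ and $\limsup_{\Vert x\Vert \to 0} u \leq 0$, for any $\varepsilon>0$ there is $\delta>0$ with $u < \varepsilon$ on $\Theta_\delta \setminus \{0\}$. The difficulty is that $u$ need not decay at infinity, so a genuine interior maximum need not exist on an unbounded domain; this is exactly where one uses the scaling of $\Delta^{\beta,\alpha}$. I would perturb $u$ by a small multiple of a strictly $\Delta^{\beta,\alpha}$-superharmonic barrier that blows up at infinity (or, equivalently, by a decaying correction), so that $u - \eta\,b$ attains its positive supremum at some point $x_0 \in \mathbb{R}^n \setminus \{0\}$ with $x_0$ bounded away from $0$ and from $\infty$ uniformly in $\eta$; letting $\eta \to 0$ recovers the statement. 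The anisotropic barriers furnished later in Section \ref{sec Fundamental solution: ingredients} (Lemma \ref{Bar. func.}) are the natural candidates here.

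\medskip

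\noindent\textbf{Step 2 (the nonlocal touching argument).} At such a maximum point $x_0$ for the perturbed function, I would use the definition of viscosity subsolution: the constant function $\varphi \equiv u(x_0)$ (slightly lifted to be a strict touching function on a small Euclidean ball $B_r(x_0) \subset \mathbb{R}^n \setminus \{0\}$, which is legitimate since the anisotropic topology coincides with the Euclidean one by Lemma \ref{elemntary results}(2)) touches $u - \eta\,b$ from above at $x_0$. Evaluating the operator on the replacement function $v$ that equals $\varphi$ on $B_r(x_0)$ and $u$ outside gives
$$
\Delta^{\beta,\alpha} v(x_0) = C_{\beta,\alpha}\int_{\mathbb{R}^n} \dfrac{v(x_0)-v(y)}{\Vert x_0 - y\Vert_\beta^{c+2\alpha}}\,dy \leq 0,
$$
while the pointwise inequality $v(y) \le u(x_0) + \eta(b(x_0)-b(y))$ combined with the barrier's strict superharmonicity forces the integral to be strictly positive — a contradiction. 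Care must be taken that the singular integral converges: since $v = \varphi$ is smooth (constant) near $x_0$ and $u \in L^1_{loc} \cap L^1(\mathbb{R}^n, w)$ via its controlled growth, the kernel $\Vert x_0 - y\Vert_\beta^{-(c+2\alpha)}$ is integrable both near $x_0$ and at infinity, so $\Delta^{\beta,\alpha} v(x_0)$ is well defined.

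\medskip

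\noindent\textbf{Main obstacle.} I expect the genuinely nontrivial point to be \textbf{Step 1}: producing an interior positive maximum on the unbounded punctured space $\mathbb{R}^n \setminus \{0\}$ without assuming decay at infinity. In the isotropic case \cite{CWW} one slides in a homogeneous barrier $\Vert x\Vert^{-\gamma}$ with $\gamma$ small; in the anisotropic case $\Vert \cdot\Vert_\beta^{-\gamma}$ is not even $C^2$ on $\partial\Theta_1^\beta$, so one must instead use the dilated-homogeneity barrier $\Vert x\Vert_{\mu\beta}^{-\gamma/\mu}$ built later in the paper, together with the fact that $\Delta^{\beta,\alpha}$ does not degenerate as $\alpha \to 2/b_{\max}$ (Lemma \ref{Bar. func.}). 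Once the correct barrier is in place, the localization of the maximum is uniform in the perturbation parameter, and Steps 2 closes the argument exactly as in the local theory. The remaining bookkeeping — equivalence of topologies, well-posedness of the singular integral under the weighted $L^1$ hypothesis, and the elementary estimate $v(y) - u(x_0) \le \eta(b(x_0) - b(y))$ — is routine given Lemma \ref{elemntary results} and the properties of $w$.
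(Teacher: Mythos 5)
The paper does not actually prove Lemma \ref{max principle}: the statement is given without proof (the \textbf{proof} environment that follows it belongs to Corollary \ref{max princ 1}), and the result is essentially imported from \cite{CWW}. So there is no in-paper proof to compare against, and I assess your proposal on its own merits.

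Your broad outline — localize a positive supremum, evaluate $\Delta^{\beta,\alpha}$ there, derive a sign contradiction — is the right one, and you correctly identify that the whole game is localizing the supremum on the unbounded punctured space. But the perturbation you propose in Step 1 is not coherent and does not close that gap. You ask for a strictly $\Delta^{\beta,\alpha}$-superharmonic barrier ``that blows up at infinity (or, equivalently, by a decaying correction)'': these two descriptions are mutually exclusive, and neither matches the barrier the paper actually constructs. The function $u_{\gamma}=\Vert x\Vert_{\mu\beta}^{-\gamma/\mu}$ of Lemma \ref{Bar. func.} blows up at the \emph{origin} and decays to zero as $\Vert x\Vert\to\infty$, so subtracting $\eta\,u_\gamma$ from $u$ kills the origin (which the hypothesis $\limsup_{\Vert x\Vert\to 0}u\le 0$ already controls) and does nothing at infinity: $u-\eta u_\gamma\to u$ at infinity, so a supremum sequence can still escape. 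To rule out escape to infinity you would need a strictly superharmonic barrier that \emph{grows} there; the paper does not construct such a barrier, and your proposal does not supply one either. As written, Step 1 does not produce the interior maximum that Step 2 needs, so the argument does not close.

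In fact, the statement as printed appears to silently assume behavior at infinity: with only $u\in C^{0}(\mathbb{R}^{n}\setminus\{0\})\cap L^{1}_{loc}$ and $\limsup_{\Vert x\Vert\to 0}u\le 0$, nothing prevents $\sup u>0$ from being approached along a sequence tending to infinity. In every application inside the paper — e.g.\ to conclude $|u_{k}|\le\varphi$ in the proof of Proposition \ref{main supersolution prop} — the lemma is applied to a difference of two functions that both vanish at infinity, so the positive supremum (if any) is attained on a compact annulus $\overline{\Theta_{R}\setminus\Theta_{\delta}}$, and the argument reduces to the elementary computation in the proof of Corollary \ref{max princ 1} (evaluating at a global extremum and using that the integrand has a sign). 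With an added hypothesis $\limsup_{\Vert x\Vert\to\infty}u\le 0$ (or a specific vanishing statement as in Corollary \ref{u goes to zero as x tends to infty lemma}) your Step 2 is sound up to bookkeeping, though you should still verify that your touching direction is consistent with the paper's Definition of viscosity sub/supersolution. Without such a hypothesis, the argument you outlined has a genuine gap.
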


Let $1 \leq q < + \infty$ and $0 < \theta < 1$. Since
\begin{eqnarray}
\label{u goes to zero as x tends to infty preliminaries}
\lim_{\Vert x \Vert \rightarrow +\infty } u(x) = 0,
\end{eqnarray}
for every $u \in C^{\theta}_{\Vert \cdot \Vert}\left(\mathbb{R}^{n}\right) \cap L^{q}\left(\mathbb{R}^{n}\right)$, see Corollary \ref{u goes to zero as x tends to infty lemma} in Section 3 for more details, a direct consequence of Lemma \ref{max principle} is a maximum principle for solutions to the anisotropic non-local Schr\"odinger equation.
\begin{corollary}
\label{max princ 1}
Let $\mu >0$ and $\beta \in \mathbb{R}^{n}_{+}$. Assume that $u \in C^{\theta}_{\Vert \cdot \Vert_{\mu \beta}}\left(\mathbb{R}^{n}\right) \cap L^{q}\left(\mathbb{R}^{n}\right)$ for some $0 < \theta < 1$ and $1 \leq q < +\infty$. If $u$ satisfies
\begin{eqnarray}
\Delta^{\beta, \alpha} u + \kappa u \geq 0 \quad \text{in} \ \mathbb{R}^{n},
\end{eqnarray}
for some $k \geq 0$, then $u \geq 0$ in $\mathbb{R}^{n}$.
\end{corollary}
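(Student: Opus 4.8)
The plan is to show directly that $u$ takes no negative value. I first apply Corollary~\ref{u goes to zero as x tends to infty lemma} (in the geometry of $\|\cdot\|_{\mu\beta}$; the $\mu\beta$-H\"older class enters only through vanishing at infinity, which transfers via Lemma~\ref{elemntary results}), obtaining $\lim_{\|x\|\to\infty}u(x)=0$. Combined with continuity, $u$ attains $\inf_{\mathbb{R}^n}u$ at some point $x_0$; assume for contradiction that $u(x_0)<0$. The difficulty to bear in mind is the low regularity: since $u\in C^{\theta}_{\|\cdot\|_{\mu\beta}}$ with $\theta<1$, no $C^2$ function touches $u$ from above at a minimum, so testing the inequality ``from above'' at $x_0$ is useless. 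But $u$ is a \emph{supersolution}, hence tested from \emph{below}, and below a minimum a downward parabola is admissible.

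For $\varepsilon>0$ and any $r>0$ set $\varphi_\varepsilon(y)=u(x_0)-\varepsilon|y-x_0|^2\in C^2$; then $\varphi_\varepsilon(x_0)=u(x_0)$ and $\varphi_\varepsilon(y)<u(x_0)\le u(y)$ for $y\ne x_0$, so $\varphi_\varepsilon$ is an admissible lower test function on $B_r(x_0)$. Taking $v_\varepsilon:=\varphi_\varepsilon$ on $B_r(x_0)$ and $v_\varepsilon:=u$ elsewhere, we have $v_\varepsilon\in C^{1,1}(x_0)\cap L^1(\mathbb{R}^n,w)$ ($u$ is bounded and $w$ has integrable tail), so $\Delta^{\beta,\alpha}v_\varepsilon(x_0)$ is well defined; writing the equation as $\Delta^{\beta,\alpha}u\ge-\kappa u$ with bounded continuous right-hand side $-\kappa u$, the viscosity inequality gives $\Delta^{\beta,\alpha}v_\varepsilon(x_0)+\kappa u(x_0)\ge0$. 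Splitting the defining integral into the parts over $B_r(x_0)$ and its complement,
\[
\Delta^{\beta,\alpha}v_\varepsilon(x_0)=C_{\beta,\alpha}\left(\varepsilon\int_{B_r(x_0)}\frac{|y-x_0|^2}{\|x_0-y\|^{c+2\alpha}}\,dy+\int_{\mathbb{R}^n\setminus B_r(x_0)}\frac{u(x_0)-u(y)}{\|x_0-y\|^{c+2\alpha}}\,dy\right),
\]
where the first integral is a finite positive constant $c_1(r)$ (its finiteness is precisely the integrability encoded in $w$, which holds because $0<\alpha<2/b_{\max}$) and the second, denoted $-c_2(r)$, is $\le0$ since $u(x_0)=\min u$, and finite because $u$ is bounded with $\|\cdot\|^{-(c+2\alpha)}$ integrable away from the origin. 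Hence $C_{\beta,\alpha}\big(\varepsilon c_1(r)-c_2(r)\big)\ge-\kappa u(x_0)$ for all $\varepsilon>0$, and letting $\varepsilon\downarrow0$ gives $\kappa u(x_0)\ge C_{\beta,\alpha}\,c_2(r)\ge0$.

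Since $u(x_0)<0$ and $\kappa\ge0$ force $\kappa u(x_0)\le0$, we obtain $\kappa u(x_0)=0$ and $c_2(r)=0$: the first gives $\kappa=0$, and the second (for every $r>0$, with non-negative integrand) gives $u\equiv u(x_0)$ a.e.\ outside each $B_r(x_0)$, hence $u\equiv u(x_0)$ on $\mathbb{R}^n$ by continuity --- a non-zero constant, contradicting $u\in L^q(\mathbb{R}^n)$. Therefore $u(x_0)\ge0$, i.e.\ $u\ge0$ on $\mathbb{R}^n$. Equivalently, the same splitting shows that $(-u)^+$ is a viscosity subsolution of $\Delta^{\beta,\alpha}(\cdot)=0$ on $\mathbb{R}^n$; dismissing the immediate case $u<0$ everywhere and translating a zero of $(-u)^+$ to the origin --- using that $\Delta^{\beta,\alpha}$ is translation invariant --- Lemma~\ref{max principle} then yields $(-u)^+\equiv0$, which is the ``direct consequence of Lemma~\ref{max principle}'' route. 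The only genuine obstacle is the one flagged above: one must not try to touch $u$ from above at its minimum, but exploit the supersolution structure, for which constant-plus-downward-parabola test functions suffice.
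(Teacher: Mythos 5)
Your proof is correct and follows the same strategy as the paper: locate a negative global minimum of $u$ (which exists by continuity plus vanishing at infinity, as in Corollary~\ref{u goes to zero as x tends to infty lemma}), and derive a contradiction from the sign of the operator there. Your execution is more careful than the paper's --- you run the supersolution test explicitly with an $\varepsilon$-family of downward parabolas, pass to the limit $\varepsilon\downarrow 0$, and close the borderline case $\kappa=0$ by observing that the vanishing of the residual integral forces $u$ to be a negative constant, contradicting $u\in L^{q}$; the paper instead states the pointwise inequality $\Delta^{\beta,\alpha}u(x_2)+\kappa u(x_2)<0$ directly, after a case split on the sign of $u(0)$ that your unified global-minimum argument makes unnecessary. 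One genuine point in your favor worth flagging: you test from below for the supersolution inequality, which is the direction the argument actually needs --- at a minimum of a H\"older, sub-$C^{2}$ function no $C^{2}$ test function touches from above, so the paper's printed pairing of $\Delta^{\beta,\alpha}u\geq f$ with $\varphi>u$ would leave the inequality vacuous there; your convention is the one consistent with the smooth case for an operator of this sign and is what the paper's own proof silently relies on.
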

\begin{proof}
We consider two cases:
\begin{enumerate}
\item Case 1: $u(0) \geq  0$. \\
Let us suppose, for the purpose of contradiction, that there exists a $x_{1} \in \mathbb{R}^{n} \setminus \left\lbrace 0 \right\rbrace$ such that $u(x_{1}) < 0$. From \eqref{u goes to zero as x tends to infty preliminaries} we can choose $R > 1$ such that
\begin{eqnarray}
\label{max princ 1 1}
u(x_{1}) - u(x) < 0, \quad \text{for all} \ x \in \mathbb{R}^{n} \setminus \Theta^{\mu \beta}_{R}.
\end{eqnarray}
We now use the continuity and non-negativity of $u$ at $0$ to get $0 < r < \Vert x_{1} \Vert_{\mu \beta }$ such that
\begin{eqnarray}
\label{max princ 1 2}
u(x_{1}) - u(x) < 0, \quad \text{for all} \ x \in \Theta^{\mu \beta}_{r}.
\end{eqnarray}
Since $x_{1} \in \Theta^{\mu \beta}_{R} \setminus \Theta^{\mu \beta}_{r}$, we find
\begin{eqnarray}
\label{max princ 1 3}
u(x_{2}) = \inf_{\Theta^{\mu \beta}_{R} \setminus \Theta^{\mu \beta}_{r}} u \leq u(x_{1}) < 0 \quad .
\end{eqnarray}
Hence
$$
\Delta^{\beta, \alpha} u(x_{2}) + \kappa u(x_{2}) < 0.
$$
\item Case 2: $u(0) <0 $. \\
Let $R > 1$ such that
\begin{eqnarray}
\label{max princ 1 4}
u(0) - u(x) < 0, \quad \text{for all} \ x \in \mathbb{R}^{n} \setminus \Theta^{\mu \beta}_{R}.
\end{eqnarray}
Notice that
$$u(x_{1}) = \inf_{\Theta^{\mu \beta}_{R}} u \leq u(0) < 0.$$
Then, from \eqref{max princ 1 4} it follows that
$$
u(x_{1}) - u(x) < 0, \quad \text{for all} \ x \in \mathbb{R}^{n} \setminus \Theta^{\mu \beta}_{R}
$$
Thus, we find
$$
\Delta^{\beta, \alpha} u(x_{1}) + \kappa u(x_{1}) < 0.
$$
\end{enumerate}
\end{proof}

\begin{lemma}[Strong maximum principle]
\label{strong max principle}
Let $\beta \in \mathbb{R}^{n}_{+}$. Assume that $u \in C^{0}_{}\left(\mathbb{R}^{n}\setminus \left\lbrace 0 \right\rbrace \right) \cap L^{1}_{loc}\left(\mathbb{R}^{n}\right)$ and $u \leq 0$ in $\mathbb{R}^{n}\setminus \left\lbrace 0 \right\rbrace$.
If $u$ satisfies
\begin{eqnarray}
\Delta^{\beta, \alpha} u \leq 0 \quad \text{in} \ \mathbb{R}^{n}\setminus \left\lbrace 0 \right\rbrace,
\end{eqnarray}
and there exists $x \in \mathbb{R}^{n}\setminus \left\lbrace 0 \right\rbrace $ such that $u(x) = 0$ then $u = 0$ in $\mathbb{R}^{n}\setminus \left\lbrace 0 \right\rbrace$.
\end{lemma}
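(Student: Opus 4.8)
The plan is to use that, since $u\le 0$ on all of $\mathbb{R}^n\setminus\{0\}$ and $u(x_0)=0$ at the point $x_0\neq 0$, the function $u$ has an \emph{interior} maximum of value $0$ at $x_0$; feeding the trivial quadratic test function into the viscosity inequality at $x_0$ forces the nonlocal weighted average of $-u$ around $x_0$ to vanish, and since $-u\ge 0$ and the kernel is positive this gives $u\equiv 0$. To set it up, first I would fix $r>0$ with $\overline{B_r(x_0)}\subset\mathbb{R}^n\setminus\{0\}$ (possible as $x_0\neq 0$), and for $\varepsilon>0$ take $\varphi_\varepsilon(y):=\varepsilon|y-x_0|^2$. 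Then $\varphi_\varepsilon\in C^\infty$, $\varphi_\varepsilon(x_0)=0=u(x_0)$, and $\varphi_\varepsilon(y)>0\ge u(y)$ for every $y\in B_r(x_0)\setminus\{x_0\}$, so $\varphi_\varepsilon$ is an admissible test function touching $u$ from above at the interior point $x_0$. Putting $v_\varepsilon:=\varphi_\varepsilon$ on $B_r(x_0)$ and $v_\varepsilon:=u$ on $\mathbb{R}^n\setminus B_r(x_0)$, the hypothesis $\Delta^{\beta,\alpha}u\le 0$ in $\mathbb{R}^n\setminus\{0\}$ (with $f\equiv 0$) yields $\Delta^{\beta,\alpha}v_\varepsilon(x_0)\le 0$.

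Next I would expand this quantity. Since $v_\varepsilon(x_0)=0$ and $D\varphi_\varepsilon(x_0)=0$, the integral is absolutely convergent and
\[
0\ \ge\ \Delta^{\beta,\alpha}v_\varepsilon(x_0)\ =\ -\,\varepsilon\,C_{\beta,\alpha}\!\!\int_{B_r(x_0)}\!\!\frac{|y-x_0|^2}{\|x_0-y\|^{c+2\alpha}}\,dy\ +\ C_{\beta,\alpha}\!\!\int_{\mathbb{R}^n\setminus B_r(x_0)}\!\!\frac{-\,u(y)}{\|x_0-y\|^{c+2\alpha}}\,dy .
\]
Write the two summands as $-\varepsilon I_1(r)$ and $I_2(r)$, with $C_{\beta,\alpha}>0$. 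Decomposing $B_r(x_0)$ into anisotropic dyadic shells $\{2^{-k-1}r<\|x_0-y\|<2^{-k}r\}$ and using $|y-x_0|\lesssim\|x_0-y\|^{2/b_{\max}}$ together with $|\Theta_\rho|\simeq\rho^{c}$ from Lemma \ref{elemntary results}, the $k$-th shell contributes $\simeq(2^{-k}r)^{\,4/b_{\max}-2\alpha}$, so the series defining $I_1(r)$ converges, i.e. $I_1(r)\in(0,\infty)$, precisely because $\alpha<2/b_{\max}$. The tail term $I_2(r)$ is $\ge 0$ since $u\le 0$ and the kernel is positive, and it is finite because $u$ (hence $v_\varepsilon$) lies in $L^1(\mathbb{R}^n,w)$, which is implicit in the very meaning of $\Delta^{\beta,\alpha}u\le 0$. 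The displayed inequality thus reads $0\le I_2(r)\le\varepsilon I_1(r)$ for all $\varepsilon>0$; letting $\varepsilon\to 0^+$ forces $I_2(r)=0$, i.e. $\int_{\mathbb{R}^n\setminus B_r(x_0)}\|x_0-y\|^{-(c+2\alpha)}\bigl(-u(y)\bigr)\,dy=0$. Since $-u\ge 0$ and the kernel is strictly positive, $u=0$ a.e.\ on $\mathbb{R}^n\setminus B_r(x_0)$; letting $r\to 0^+$ gives $u=0$ a.e.\ on $\mathbb{R}^n\setminus\{0\}$, and the continuity of $u$ on $\mathbb{R}^n\setminus\{0\}$ upgrades this to $u\equiv 0$ there.

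The hard part — as is typical for nonlocal operators — is the bookkeeping in the second step: one has to be sure that $\varphi_\varepsilon$ is genuinely admissible (this is exactly where $u\le 0$ globally and $x_0$ interior are used), and that both pieces of $\Delta^{\beta,\alpha}v_\varepsilon(x_0)$ are classically finite, the local one by $\alpha<2/b_{\max}$ and the tail one by the weighted integrability of $u$. Beyond that, the argument is just the classical fact that an interior extremum of a subsolution annihilates its nonlocal average, transplanted to the anisotropic geometry, where the only place anisotropy intervenes is the elementary shell estimate for $I_1(r)$.
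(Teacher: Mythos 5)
The paper states this lemma without proof, so there is no internal argument to compare against; the authors evidently regard it as the anisotropic transplant of the corresponding fact in \cite{CWW}. Your argument is the standard one — an interior global maximum of value zero forces the nonlocal average to vanish — and it is essentially correct. The choice $\varphi_\varepsilon(y)=\varepsilon|y-x_0|^2$ is admissible because $u\le 0$ globally makes $x_0$ a strict touching point, and your shell estimate for $I_1(r)$ correctly isolates $\alpha<2/b_{\max}$ as the condition guaranteeing $\int_{B_r}|y-x_0|^2\,\|x_0-y\|^{-(c+2\alpha)}\,dy<\infty$ (alternatively one could cite \eqref{equivalence between norms on anisotropic sphere 2} in Lemma \ref{scaling properties lemma} directly). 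Sending $\varepsilon\to 0$ to kill $I_2(r)$ and then $r\to 0$ plus continuity is exactly the right bookkeeping.

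Two small remarks. First, you test $\Delta^{\beta,\alpha}u\le 0$ from \emph{above}, and you should be aware that the paper's Definition 2.5, read literally, pairs ``$\Delta^{\beta,\alpha}u\le f$'' with testing from \emph{below}; this appears to be a slip in the parenthetical pairings (since replacing $u$ by a larger $\varphi$ inside $B_r$ \emph{decreases} $\Delta^{\beta,\alpha}v(x_0)$, testing from above is the only direction that yields $\Delta^{\beta,\alpha}v(x_0)\le 0$ from a classical subsolution). Your reading matches the convention of \cite{CS, CLU}, which is clearly what the authors use elsewhere (e.g.\ in the proof of Corollary \ref{max princ 1}), so this is a defect in the paper's statement rather than in your proof, but it is worth flagging explicitly if you include this argument. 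Second, the hypothesis $u\in L^1_{\mathrm{loc}}$ in the lemma does not by itself give $u\in L^1(\mathbb{R}^n,w)$; you are right that the finiteness of the tail term is implicit in demanding that $\Delta^{\beta,\alpha}v_\varepsilon(x_0)$ be well-defined, but stating that $u\in L^1(\mathbb{R}^n,w)$ is a tacit standing assumption for the viscosity formulation (as in Definition \ref{C^{1,1} definition} and the discussion following it) would make the proof airtight.
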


Let $\beta \in \mathbb{R}^{n}_{+}$. In next result $\mu = \mu_{\beta}$ is a natural number such that $\Vert x \Vert_{\mu\beta}$ is a $C^{2}$-function in $\mathbb{R}^{n} \setminus \left\lbrace 0 \right\rbrace$. We now present Schauder's estimates and $L^{q}$-estimates for strong solutions to the anisotropic non-local Schr\" odinger equation that we use in our analysis.

\begin{lemma}
\label{C-regularity in general}
Let $\beta=(b_{1}, \dots, b_{n}) \in \mathbb{R}^{n}_{+}$, $\kappa \geq 0$, $0 < \alpha < \frac{2}{b_{\max}}$, $0 < r < R < 1$ and $f \in L^{\infty}(\Theta^{\mu \beta}_{1})$. If $u \in C^{2}_{0}(\Theta^{\mu \beta}_{1}) \cap L_{1} (\mathbb{R}^{n}, w)$ is a solution of
$$
\Delta^{\beta, \alpha} u + \kappa u = f,  \quad \text{in} \ \Theta^{\mu \beta}_{R},
$$
then we have
\begin{eqnarray*}
\label{C-reg. estimate in general}
\left[ u \right]_{C_{\Vert \cdot \Vert_{\mu \beta}}^{\theta}(\Theta^{\mu \beta}_{r})} \leq C \left( (R - r)^{-\theta} \Vert u \Vert_{L^{\infty}(\Theta^{\mu \beta}_{R})} + (R - r)^{-\frac{c_{\beta}}{\mu}-\theta} \Vert u \Vert_{ L^{1} (\mathbb{R}^{n}, w)} + (R - r)^{2\frac{\alpha}{\mu} - \theta} \Vert f \Vert_{L^{\infty}(\Theta^{\mu \beta}_{R})} \right),
\end{eqnarray*}
for any $\theta \in \left(0, \min \left \lbrace 2/\mu b_{\max}, 2\alpha/\mu \right \rbrace \right)$, where $C=C(\beta, \mu, n) >0$ is a constant.
\end{lemma}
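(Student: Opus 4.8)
The plan is to combine the intrinsic scaling of $\Delta^{\beta,\alpha}$ with two facts already at our disposal: the interior $L^{q}$-estimates for strong solutions of the anisotropic Schr\"odinger equation from \cite{LEITAO LP}, and the embedding of the anisotropic Gagliardo space into $C^{\theta}_{\Vert\cdot\Vert_{\beta}}$ furnished by the first part of Theorem \ref{main theorem} (together with the identifications of Lemma \ref{Main. Lemma}). First I record the scaling laws. Writing $T_{\beta,\lambda}e_{i}=\lambda^{2/b_{i}}e_{i}$ and $\phi_{x_{0},\lambda}(x)=x_{0}+T_{\beta,\lambda}x$, the substitution $y=\phi_{x_{0},\lambda}(z)$ in the singular integral together with the translation invariance of $\Delta^{\beta,\alpha}$ gives $\Delta^{\beta,\alpha}(u\circ\phi_{x_{0},\lambda})=\lambda^{2\alpha}\,\big((\Delta^{\beta,\alpha}u)\circ\phi_{x_{0},\lambda}\big)$, while $\Vert T_{\beta,\lambda}x\Vert_{\mu\beta}=\lambda^{\mu}\Vert x\Vert_{\mu\beta}$, $\Vert T_{\beta,\lambda}x\Vert_{\beta}=\lambda\Vert x\Vert_{\beta}$, $|\det T_{\beta,\lambda}|=\lambda^{c_{\beta}}$ and $w(T_{\beta,\lambda}^{-1}z)\le w(z)$ for $0<\lambda\le1$. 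I also note the elementary comparison $n^{(1-\mu)/2}\Vert x\Vert_{\beta}^{\mu}\le\Vert x\Vert_{\mu\beta}\le\Vert x\Vert_{\beta}^{\mu}$ (apply $t\mapsto t^{\mu}$ and Jensen to $t_{i}=|x_{i}|^{b_{i}}$), which identifies $C^{\theta}_{\Vert\cdot\Vert_{\mu\beta}}$ with $C^{\mu\theta}_{\Vert\cdot\Vert_{\beta}}$ with comparable seminorms, and observe that the hypothesis on $\theta$ is precisely $\mu\theta<\min\{2/b_{\max},2\alpha\}$.

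\emph{Step 1 (estimate at unit scale).} I would first show that there is $C>0$, depending on $\beta,\mu,n,\alpha$ and $\theta$, such that $\Delta^{\beta,\alpha}u+\kappa u=f$ in $\Theta^{\mu\beta}_{1}$ with $\kappa\ge0$ implies $[u]_{C^{\theta}_{\Vert\cdot\Vert_{\mu\beta}}(\Theta^{\mu\beta}_{1/2})}\le C\big(\Vert u\Vert_{L^{\infty}(\Theta^{\mu\beta}_{1})}+\Vert u\Vert_{L^{1}(\mathbb{R}^{n},w)}+\Vert f\Vert_{L^{\infty}(\Theta^{\mu\beta}_{1})}\big)$. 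Since $f\in L^{\infty}(\Theta^{\mu\beta}_{1})\subset L^{\tilde q}(\Theta^{\mu\beta}_{1})$ for every $\tilde q<+\infty$, the interior $L^{\tilde q}$-estimates for $\Delta^{\beta,\alpha}$ of \cite{LEITAO LP} — valid uniformly in $\kappa\ge0$, the zeroth-order term being sign-definite, and with the nonlocal tail controlled by a weighted $L^{1}$-norm — yield $u\in H^{2\alpha,\tilde q}_{\beta}(\Theta^{\mu\beta}_{2/3})$ with $\Vert u\Vert_{H^{2\alpha,\tilde q}_{\beta}(\Theta^{\mu\beta}_{2/3})}\le C\big(\Vert f\Vert_{L^{\infty}(\Theta^{\mu\beta}_{1})}+\Vert u\Vert_{L^{\infty}(\Theta^{\mu\beta}_{1})}+\Vert u\Vert_{L^{1}(w)}\big)$. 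Fixing $\eta\in C^{\infty}_{c}(\Theta^{\mu\beta}_{2/3})$ with $\eta\equiv1$ on $\Theta^{\mu\beta}_{1/2}$, using that pointwise multiplication by a fixed smooth compactly supported function is bounded on these anisotropic spaces and the nesting $H^{2\alpha,\tilde q}_{\beta}\hookrightarrow W^{\alpha_{1},\tilde q}_{\beta}$ for $\alpha_{1}<\min\{2\alpha,2/b_{\max}\}$ (from the anisotropic Bessel--Besov--Gagliardo relations, cf. Lemma \ref{Main. Lemma} and \cite{LEITAO LP}), we get $\eta u\in W^{\alpha_{1},\tilde q}_{\beta}(\mathbb{R}^{n})$ with norm bounded by the same right-hand side. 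Now pick $\alpha_{1}$ and $\tilde q$ with $\alpha_{1}-c_{\beta}/\tilde q=\mu\theta$: then $\alpha_{1}\tilde q>c_{\beta}$ automatically, and $\alpha_{1}<\min\{2\alpha,2/b_{\max}\}$ once $\tilde q$ is large since $\mu\theta<\min\{2\alpha,2/b_{\max}\}$, so the first part of Theorem \ref{main theorem} gives $\eta u\in C^{\mu\theta}_{\Vert\cdot\Vert_{\beta}}(\mathbb{R}^{n})$ with $[\eta u]_{C^{\mu\theta}_{\Vert\cdot\Vert_{\beta}}}\le C\Vert\eta u\Vert_{W^{\alpha_{1},\tilde q}_{\beta}}$. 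Restricting to $\Theta^{\mu\beta}_{1/2}$, where $\eta u=u$, and passing to the $\Vert\cdot\Vert_{\mu\beta}$-H\"older scale through the comparison of the first paragraph proves the claim.

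\emph{Step 2 (scaling and covering).} Let $0<r<R<1$ and $x_{0}\in\Theta^{\mu\beta}_{r}$. Using the quasi-triangle inequality for $\Vert\cdot\Vert_{\mu\beta}$ (and, when $r$ is close to $R$, a standard iteration over dyadic radii) one fixes $c_{0}=c_{0}(\beta,\mu)\in(0,1]$ so that, with $s=c_{0}(R-r)$ and $\lambda=s^{1/\mu}<1$, the ball $\phi_{x_{0},\lambda}(\Theta^{\mu\beta}_{1})=\Theta^{\mu\beta}_{s}(x_{0})$ lies in $\Theta^{\mu\beta}_{R}$. Then $\tilde u=u\circ\phi_{x_{0},\lambda}$ solves $\Delta^{\beta,\alpha}\tilde u+\lambda^{2\alpha}\kappa\,\tilde u=\lambda^{2\alpha}\,(f\circ\phi_{x_{0},\lambda})$ in $\Theta^{\mu\beta}_{1}$ with $\lambda^{2\alpha}\kappa\ge0$, and by the scaling laws $\Vert\tilde u\Vert_{L^{\infty}(\Theta^{\mu\beta}_{1})}\le\Vert u\Vert_{L^{\infty}(\Theta^{\mu\beta}_{R})}$, $\Vert\tilde u\Vert_{L^{1}(w)}\le C\lambda^{-c_{\beta}}\Vert u\Vert_{L^{1}(w)}$, $\lambda^{2\alpha}\Vert f\circ\phi_{x_{0},\lambda}\Vert_{L^{\infty}(\Theta^{\mu\beta}_{1})}\le\lambda^{2\alpha}\Vert f\Vert_{L^{\infty}(\Theta^{\mu\beta}_{R})}$, while $[\tilde u]_{C^{\theta}_{\Vert\cdot\Vert_{\mu\beta}}(\Theta^{\mu\beta}_{1/2})}=s^{\theta}\,[u]_{C^{\theta}_{\Vert\cdot\Vert_{\mu\beta}}(\Theta^{\mu\beta}_{s/2}(x_{0}))}$. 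Applying Step 1 to $\tilde u$ and using $\lambda^{c_{\beta}}=s^{c_{\beta}/\mu}$, $\lambda^{2\alpha}=s^{2\alpha/\mu}$ and $s=c_{0}(R-r)$ gives
\[
[u]_{C^{\theta}_{\Vert\cdot\Vert_{\mu\beta}}(\Theta^{\mu\beta}_{s/2}(x_{0}))}\le C\Big((R-r)^{-\theta}\Vert u\Vert_{L^{\infty}(\Theta^{\mu\beta}_{R})}+(R-r)^{-\frac{c_{\beta}}{\mu}-\theta}\Vert u\Vert_{L^{1}(w)}+(R-r)^{\frac{2\alpha}{\mu}-\theta}\Vert f\Vert_{L^{\infty}(\Theta^{\mu\beta}_{R})}\Big).
\]
Since any $x,y\in\Theta^{\mu\beta}_{r}$ with $\Vert x-y\Vert_{\mu\beta}<s/4$ lie in the common ball $\Theta^{\mu\beta}_{s/2}(x)$, and for $\Vert x-y\Vert_{\mu\beta}\ge s/4$ one has $|u(x)-u(y)|\,\Vert x-y\Vert_{\mu\beta}^{-\theta}\le C(R-r)^{-\theta}\Vert u\Vert_{L^{\infty}(\Theta^{\mu\beta}_{R})}$ directly, taking the supremum over $\Theta^{\mu\beta}_{r}$ completes the proof.

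\emph{Main obstacle.} The crux is Step 1: one must push the gained H\"older exponent in $\Vert\cdot\Vert_{\beta}$ up to (an $\varepsilon$ below) the threshold $\min\{2\alpha,2/b_{\max}\}$, which is why one exploits that $f\in L^{\infty}$ lies in every $L^{\tilde q}_{\mathrm{loc}}$ — letting $\tilde q\to\infty$ — and uses the nesting of the anisotropic Bessel/Gagliardo spaces to stay below the cap $2/b_{\max}$ imposed by Theorem \ref{main theorem}; the translation from $\Vert\cdot\Vert_{\beta}$- to $\Vert\cdot\Vert_{\mu\beta}$-regularity via $\Vert\cdot\Vert_{\mu\beta}\approx\Vert\cdot\Vert_{\beta}^{\mu}$ is what makes $\mu$ enter all three exponents $-\theta,\ -\tfrac{c_{\beta}}{\mu}-\theta,\ \tfrac{2\alpha}{\mu}-\theta$. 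Secondary technical points, which I do not expect to be serious, are the $\kappa$-uniformity of the constant (keep the sign-definite zeroth-order term on the left when invoking the $L^{q}$-estimate) and the bookkeeping of the dilation $T_{\beta,\lambda}=T_{\mu\beta,\lambda^{\mu}}$ against the possibly large quasi-triangle constant of $\Vert\cdot\Vert_{\mu\beta}$ in the covering step.
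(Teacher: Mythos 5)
The paper does not actually prove Lemma \ref{C-regularity in general}; it imports it from \cite{LEITAO LP}, as announced in the introduction (``Schauder estimates and $L_q$-estimates for strong solutions \dots can be found in \cite{LEITAO LP}''). There is therefore no internal proof to compare your proposal against, and the comparison has to be to the general structure one expects of such a Schauder estimate.

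Your Step~2 (the dilation $\phi_{x_0,\lambda}$, the identities $\Delta^{\beta,\alpha}(u\circ\phi_{x_0,\lambda})=\lambda^{2\alpha}(\Delta^{\beta,\alpha}u)\circ\phi_{x_0,\lambda}$, $\Vert T_{\beta,\lambda}x\Vert_{\mu\beta}=\lambda^{\mu}\Vert x\Vert_{\mu\beta}$, $|\det T_{\beta,\lambda}|=\lambda^{c_\beta}$, and the monotonicity of $w$) is correct, and the bookkeeping that produces the three exponents $-\theta$, $-c_\beta/\mu-\theta$, $2\alpha/\mu-\theta$ from $s=c_0(R-r)$ and $\lambda=s^{1/\mu}$ is exactly the right normalization. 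The reduction of the $\Vert\cdot\Vert_{\mu\beta}$-H\"older scale to the $\Vert\cdot\Vert_{\beta}$-scale via $\Vert\cdot\Vert_{\mu\beta}\approx\Vert\cdot\Vert_{\beta}^{\mu}$ and the identification of the hypothesis $\theta<\min\{2/\mu b_{\max},2\alpha/\mu\}$ with $\mu\theta<\min\{2/b_{\max},2\alpha\}$ is also the right observation.

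The gaps are concentrated in Step~1, and they are precisely the parts that this paper itself does not supply. (i) You invoke ``interior $L^{\tilde q}$-estimates'' with the nonlocal tail controlled by $\Vert u\Vert_{L^1(w)}$; the paper only states the global estimate of Theorem \ref{Lq estimates} (with $\kappa>0$, on all of $\mathbb{R}^n$), and the localization by a cutoff $\eta$ produces commutator terms $\Delta^{\beta,\alpha}(\eta u)-\eta\Delta^{\beta,\alpha}u$ whose treatment is nontrivial and is not sketched. (ii) You then use a nesting $H^{2\alpha,\tilde q}_\beta\hookrightarrow W^{\alpha_1,\tilde q}_\beta$ across smoothness levels; Lemma \ref{Main. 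Lemma} only gives the embedding $H^{\alpha,q}_{r_\beta}\hookrightarrow W^{\alpha,q}_\beta$ at the same level, and only under $c_\beta=n$, which the statement of the lemma does not explicitly assume (though it is the standing hypothesis in the paper's applications). (iii) There is a factor-of-two tension that you silently resolved in your favor: the paper's Theorem \ref{Lq estimates} puts the solution in $H^{\alpha,q}_\beta$, not $H^{2\alpha,q}_\beta$, while your argument (and indeed the exponent ceiling $\min\{2/b_{\max},2\alpha\}$ in the lemma's statement) requires a gain of order $2\alpha$ in the Gagliardo scale. You should flag this explicitly: either the paper's Theorem \ref{Lq estimates} uses an unusual normalization of the Bessel index, or your invocation needs justification, because with only $\alpha$ of smoothness the H\"older exponent your chain produces is capped at $\alpha$, not $2\alpha$. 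Finally, the paper claims $C=C(\beta,\mu,n)$, while your Step~1 constant visibly depends on $\alpha$ and $\theta$ (through $\tilde q$ and $\alpha_1$); either the paper is abusing notation or a more careful tracking of constants is required.

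In short: your scaling/covering framework is sound and is almost certainly the skeleton of the argument in \cite{LEITAO LP}, but the unit-scale core is not proved here and cannot be derived from the ingredients this paper actually states; it would have to be imported wholesale from \cite{LEITAO LP}, which is exactly what the paper itself does.
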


\begin{remark}
\label{C regilarity original norm remark}
If $\mu>0$ is as in previous Lemma we have $\mu b_{i} \geq b_{i}$. We then deduce that
\begin{eqnarray}
\Theta^{\beta}_{r} \subset \Theta^{\mu \beta}_{r} \quad \text{and} \quad \Vert x - y \Vert_{\mu\beta} \leq \Vert x - y \Vert^{\mu}_{\beta}, \quad \text{for all} \ x,y \in \Theta^{\beta}_{r},
\end{eqnarray}
for every $0 < r < 1$. Hence, we can drop $\mu$ in previous Lemma. More precisely, we get
$$
u \in C^{\theta}_{\Vert \cdot \Vert_{\beta}}\left( \partial \Theta^{\beta}_{1/2} \right) \quad \text{if} \ 0 < \theta < \left(0, \min \left \lbrace 2/b_{\max}, 2\alpha \right \rbrace \right).
$$
\end{remark}

\begin{theorem}[$L^{q}$ estimates]
\label{Lq estimates}
Let $1 < q < +\infty$, $\beta = (b_{1}, \dots, b_{n}) \in \mathbb{R}^{n}_{+}$, and $0 < \alpha < \frac{2}{b_{\max}}$. If $\kappa > 0$, there exists a unique strong solution $u \in H_{\beta}^{\alpha,q}(\mathbb{R}^{n})$ such that
\begin{eqnarray}
\Delta^{\beta, \alpha}u + \kappa u = f \quad \text{in} \ \mathbb{R}^{n},
\end{eqnarray}
and
\begin{eqnarray}
\Vert u \Vert_{H_{\beta}^{\alpha, q}(\mathbb{R}^{n})} \leq C \Vert f \Vert_{L^{q}\left( \mathbb{R}^{n}\right)},
\end{eqnarray}
where $C > 0$ is a constant depending on $\beta$, $\alpha$, $q$, and $n$.
\end{theorem}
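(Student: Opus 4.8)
The plan is to realise $\Delta^{\beta,\alpha}$ as a Fourier multiplier and to invert $\Delta^{\beta,\alpha}+\kappa$ on the Fourier side; the $L^{q}$ bound then reduces to an anisotropic Mikhlin--H\"ormander (Lizorkin) multiplier estimate, and existence/uniqueness follow from the strict positivity of the symbol of $\Delta^{\beta,\alpha}+\kappa$.

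First I would compute the symbol. Symmetrising the defining integral (legitimate because $\Vert\cdot\Vert_{\beta}$ is even), for $\varphi$ in the Schwartz class one gets $\mathfrak{F}\bigl(\Delta^{\beta,\alpha}\varphi\bigr)(\xi)=p_{\alpha}(\xi)\,\mathfrak{F}(\varphi)(\xi)$ with
\begin{equation*}
p_{\alpha}(\xi)=C_{\beta,\alpha}\int_{\mathbb{R}^{n}}\frac{1-\cos(\xi\cdot z)}{\Vert z\Vert_{\beta}^{c_{\beta}+2\alpha}}\,dz .
\end{equation*}
The integral converges precisely because $\alpha<2/b_{\max}$: near $z=0$ the numerator is $O(\vert z\vert^{2})$, and in the anisotropic polar coordinates attached to the dilations $T_{\beta,\kappa}$ the integrand is then $O(\rho^{-1-2\alpha+4/b_{\max}})$, which is integrable at the origin iff $2\alpha<4/b_{\max}$ (integrability at infinity is automatic). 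The substitution $z=T_{\beta,\kappa}w$ gives $p_{\alpha}(T_{\beta,\kappa}\xi)=\kappa^{2\alpha}p_{\alpha}(\xi)$; since $p_{\alpha}$ is continuous and strictly positive off the origin this yields $c^{-1}\Vert\xi\Vert_{\beta}^{2\alpha}\le p_{\alpha}(\xi)\le c\Vert\xi\Vert_{\beta}^{2\alpha}$, i.e. $p_{\alpha}$ carries the anisotropic scaling of the Bessel weight of $H^{\alpha,q}_{\beta}$. Finally $p_{\alpha}\in C^{\infty}(\mathbb{R}^{n}\setminus\{0\})$ (differentiate under the integral sign, using the oscillation of $\sin(\xi\cdot z)$ for the large-$z$ part) and, by homogeneity, for every $\gamma\in\{0,1\}^{n}$ one has $\vert\xi^{\gamma}\partial^{\gamma}p_{\alpha}(\xi)\vert\lesssim\Vert\xi\Vert_{\beta}^{2\alpha}$ for all $\xi$ — each differentiation in $\xi_{i}$ lowers the anisotropic degree only by $2/b_{i}$, and the scaling keeps the product under control.

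Since $\kappa>0$, the symbol $p_{\alpha}+\kappa$ is bounded below by $\kappa$ and never vanishes, so for $f$ in the Schwartz class we set $u:=\mathfrak{F}^{-1}\bigl[(p_{\alpha}+\kappa)^{-1}\mathfrak{F}(f)\bigr]$; by construction this solves $\Delta^{\beta,\alpha}u+\kappa u=f$ on the Fourier side. Writing $\Vert u\Vert_{H^{\alpha,q}_{\beta}}=\bigl\Vert\mathfrak{F}^{-1}[M\,\mathfrak{F}(f)]\bigr\Vert_{L^{q}}$, where $M(\xi):=\omega_{\alpha}(\xi)\,(p_{\alpha}(\xi)+\kappa)^{-1}$ and $\omega_{\alpha}$ is the anisotropic Bessel weight characterised by $\Vert v\Vert_{H^{\alpha,q}_{\beta}}=\Vert\mathfrak{F}^{-1}[\omega_{\alpha}\mathfrak{F}(v)]\Vert_{L^{q}}$, the estimate reduces to $L^{q}$-boundedness of the Fourier multiplier $M$. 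For this I would invoke the anisotropic multiplier theorem adapted to the dilation group $\{T_{\beta,\kappa}\}$ (see \cite{LIZORKIN 3} and Theorem 2.3.2 in \cite{BESSELFARKAS}): $M$ is bounded because $\omega_{\alpha}\le C(1+p_{\alpha})\le C'(p_{\alpha}+\kappa)$; away from the origin the derivative bounds for $p_{\alpha}$ above, together with the analogous ones for $\omega_{\alpha}$, give the required decay of $\xi^{\gamma}\partial^{\gamma}M$ on each anisotropic dyadic annulus, uniformly by scaling; and near the origin $p_{\alpha}+\kappa$ is bounded below by $\kappa$, so $M$ and its derivatives there are controlled by $\kappa^{-1}$ times the ($O(1)$) corresponding quantities for $\omega_{\alpha}$. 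This gives $\Vert u\Vert_{H^{\alpha,q}_{\beta}}\le C\Vert f\Vert_{L^{q}}$, first on a dense subclass and then, by density, for every $f\in L^{q}(\mathbb{R}^{n})$; the same density argument together with the boundedness of $\Delta^{\beta,\alpha}\colon H^{\alpha,q}_{\beta}\to L^{q}$ (its symbol $p_{\alpha}/\omega_{\alpha}$ is again an admissible multiplier) shows that the extended $u$ is a genuine strong solution.

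For uniqueness, if $v\in H^{\alpha,q}_{\beta}$ satisfies $\Delta^{\beta,\alpha}v+\kappa v=0$, then — viewing $v$ as a tempered distribution via $H^{\alpha,q}_{\beta}\hookrightarrow L^{q}\hookrightarrow\mathcal{S}'$ — we get $(p_{\alpha}+\kappa)\,\mathfrak{F}(v)=0$ in $\mathcal{S}'$, and since $p_{\alpha}+\kappa\ge\kappa>0$ this forces $\mathfrak{F}(v)=0$, hence $v=0$. The main obstacle in this scheme is the verification of the anisotropic Mikhlin--H\"ormander conditions for $M$ near the origin: there $p_{\alpha}$ is merely H\"older (it is not $C^{\infty}$ at $0$ unless every $\alpha b_{i}$ is an even integer), so one must check directly that $\xi^{\gamma}\partial^{\gamma}\bigl[(p_{\alpha}+\kappa)^{-1}\bigr]$ remains bounded — which it does precisely because $2\alpha>0$, so $\vert\xi^{\gamma}\partial^{\gamma}p_{\alpha}\vert\lesssim\Vert\xi\Vert_{\beta}^{2\alpha}\to0$ as $\xi\to0$, while $(p_{\alpha}+\kappa)^{-1}$ is harmless thanks to the strict lower bound $\kappa$. (If one wants $C$ independent of $\kappa$, it suffices to keep $\kappa$ bounded away from $0$, which is the only range used in the sequel.)
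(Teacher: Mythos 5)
The paper does not prove this statement: it is imported verbatim from \cite{LEITAO LP}, as the sentence immediately preceding it acknowledges, so there is no in-paper argument to compare against. That said, your Fourier-multiplier scheme is the natural route and almost certainly the one taken in the cited reference: compute the symbol $p_{\alpha}$ of $\Delta^{\beta,\alpha}$, establish the anisotropic homogeneity $p_{\alpha}(T_{\beta,\kappa}\xi)=\kappa^{2\alpha}p_{\alpha}(\xi)$ and hence the two-sided comparison $p_{\alpha}(\xi)\asymp\Vert\xi\Vert_{\beta}^{2\alpha}$, then invert $p_{\alpha}+\kappa$ via an anisotropic Marcinkiewicz--Lizorkin multiplier theorem. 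Your identification of the convergence threshold $\alpha<2/b_{\max}$ (via $O(\vert z\vert^{2})$ near $0$ and anisotropic polar coordinates) is correct, and the reduction of the target estimate to the $L^{q}$-boundedness of the single multiplier $M=\omega_{\alpha}/(p_{\alpha}+\kappa)$ is clean.

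Two points deserve repair. First, the $\kappa$-dependence you flag at the very end is a genuine tension with the statement, and your parenthetical dismissal is not available: the theorem asserts $C=C(\beta,\alpha,q,n)$ with no $\kappa$, yet in your setup $\sup_{\xi}M(\xi)\gtrsim\omega_{\alpha}(0)/\kappa$, so the Mikhlin constant (hence $C$) blows up like $\kappa^{-1}$ as $\kappa\to 0$; moreover the paper's own use of this theorem (Proposition~\ref{main supersolution prop}) takes $\kappa=2^{-k}\to 0$, so $\kappa$ is \emph{not} kept bounded away from zero in the sequel. Either \cite{LEITAO LP} works with a $\kappa$-weighted or homogeneous norm that makes the estimate genuinely uniform, or the constant does depend on $\kappa$; your proof as written only yields the latter. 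Second, the uniqueness step is slightly too quick: from $(p_{\alpha}+\kappa)\,\mathfrak{F}(v)=0$ in $\mathcal{S}'$ one cannot simply divide, since $p_{\alpha}$ is not smooth at the origin. The clean route is to test against Schwartz functions $\varphi$ supported away from $0$, where one can legitimately factor $\varphi=(p_{\alpha}+\kappa)\psi$ with $\psi\in\mathcal{S}$, conclude that $\operatorname{supp}\mathfrak{F}(v)\subset\{0\}$, hence that $v$ is a polynomial, and finally that the only polynomial in $L^{q}$ is zero. With these two repairs, and granting the (nontrivial but standard) verification of the anisotropic Mikhlin bounds $\vert\xi^{\gamma}\partial^{\gamma}M\vert\lesssim 1$ on dyadic $T_{\beta,\cdot}$-annuli, your outline is a valid proof strategy.
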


\subsubsection{$-\gamma$-homogeneous solutions}
Given $\gamma \in \left(0, +\infty \right)$ we consider the set of the continuous functions in $\mathbb{R}^{n}\setminus \left\lbrace 0 \right\rbrace$ that are non-negative and $-\gamma$-homogeneous:
\begin{eqnarray}
\mathcal{A}_{\gamma} = \left\lbrace u \in C^{0}_{}\left(\mathbb{R}^{n}\setminus \left\lbrace 0 \right\rbrace \right): u \geq 0, r^{\gamma} u(T_{\beta, r} x) = u(x) \right\rbrace.
\end{eqnarray}
We also denote by
\begin{eqnarray}
\mathcal{A}^{+}_{\gamma} = \left\lbrace u \in \mathcal{A}_{\gamma}: u > 0 \right\rbrace.
\end{eqnarray}
The scaling exponent of $(-\Delta)^{\beta, \alpha}$ is defined by $\gamma_{\beta, \alpha}^{\ast} = \sup \mathcal{I}_{\beta\alpha}$ where
\begin{eqnarray*}
\mathcal{I}_{\beta\alpha} = \left\lbrace \gamma \in \left(0, +\infty \right) : \text{there exists} \ u \in \mathcal{A}^{+}_{\gamma} \ \text{such that} \ \Delta^{\beta, \alpha}u \geq 0 \ \text{in} \ \mathbb{R}^{n}\setminus \left\lbrace 0 \right\rbrace  \right\rbrace.
\end{eqnarray*}

The first step towards construction of fundamental solutions for $\Delta^{\beta, \alpha}$ is to prove that $\mathcal{I}_{\beta\alpha}$ is not empty. Via a standard scaling argument, see Lemma \ref{scaling lemma on the anisotropic sphere} below, it suffices to find a $-\gamma$-homogeneous function which is a supersolution on $\partial \Theta^{\mu \beta}_{1}$ for some suitable $\mu>0$. Therefore, it is essential to understand how the norm $\Vert \cdot \Vert_{\mu \beta}$ and the scaling anisotropic $T_{\beta, r}$ are related for any $r>0$. This is the content of the next result.

\begin{lemma}[Scaling properties]
\label{scaling properties lemma}
Given $\beta=(b_{1}, \dots, b_{n}) \in \mathbb{R}^{n}_{+}$ and $r>0$, let $T_{\beta,r}: \mathbb{R}^{n} \rightarrow \mathbb{R}^{n}$ be the linear map defined by
\begin{eqnarray}
T_{\beta,r}e_{i} = r^{2/b_{i}}e_{i},
\end{eqnarray}
where $e_{i}$ is the $i$-th canonical vector. If $\mu > 0$ and $x \neq 0$, we have
\begin{eqnarray}
T^{-1}_{\beta,r} = T^{}_{\beta,r^{-1}} \quad \text{and} \quad r^{2\mu}\Vert T^{-1}_{\beta,r} x \Vert^{2}_{\mu \beta} = \Vert x \Vert^{2}_{\mu \beta}.
\end{eqnarray}
In particular:
\begin{enumerate}
\item If $r = \Vert x \Vert_{\mu \beta}^{1/\mu}$ we get
 \begin{eqnarray}
\label{anisotropic inversion}
T^{-1}_{\beta,r}\left( \mathbb{R}^{n} \setminus \left\lbrace 0 \right\rbrace \right) \subset \partial \Theta^{\mu \beta}_{1}.
\end{eqnarray}
\item If $0 < r < 1$ and $0 < \alpha < 2/ b_{\max}$, we find
\begin{eqnarray}
\label{equivalence between norms on anisotropic sphere 2}
\int_{\Theta^{\mu \beta}_{r}} \dfrac{\vert y \vert^{2}}{\Vert y \Vert_{\beta}^{c_{\beta}+2\alpha}} dy < +\infty \quad \text{and} \quad \int_{\mathbb{R}^{n} \setminus \Theta^{\mu \beta}_{r}} \dfrac{1}{\Vert y \Vert_{\beta_{}}^{c_{\beta_{}}+2\alpha}} dy < +\infty.
\end{eqnarray}
\end{enumerate}
\end{lemma}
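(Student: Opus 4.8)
\emph{Outline.} The statement decomposes into two unrelated parts: the algebraic scaling identities together with item (1), which are direct computations, and the two integrability estimates of item (2), which are elementary once the gauges $\Vert\cdot\Vert_\beta$ and $\Vert\cdot\Vert_{\mu\beta}$ are compared.

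\textbf{Step 1 (scaling identities and item (1)).} First I would note that the $i$-th component of $T^{-1}_{\beta,r}x$ is $r^{-2/b_i}x_i=(r^{-1})^{2/b_i}x_i$, i.e. the $i$-th component of $T_{\beta,r^{-1}}x$; hence $T^{-1}_{\beta,r}=T_{\beta,r^{-1}}$. From $\Vert z\Vert_{\mu\beta}^2=\sum_{i=1}^n|z_i|^{\mu b_i}$ one then computes
\[
\Vert T^{-1}_{\beta,r}x\Vert_{\mu\beta}^2=\sum_{i=1}^n\bigl|r^{-2/b_i}x_i\bigr|^{\mu b_i}=\sum_{i=1}^n r^{-2\mu}|x_i|^{\mu b_i}=r^{-2\mu}\Vert x\Vert_{\mu\beta}^2,
\]
which is the asserted identity $r^{2\mu}\Vert T^{-1}_{\beta,r}x\Vert_{\mu\beta}^2=\Vert x\Vert_{\mu\beta}^2$. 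For item (1), choosing $r=\Vert x\Vert_{\mu\beta}^{1/\mu}$ (pointwise in $x$) gives $r^{2\mu}=\Vert x\Vert_{\mu\beta}^2$, so $\Vert T^{-1}_{\beta,r}x\Vert_{\mu\beta}^2=r^{-2\mu}\Vert x\Vert_{\mu\beta}^2=1$, i.e. $T^{-1}_{\beta,r}x\in\partial\Theta^{\mu\beta}_1$ for every $x\neq0$; thus the anisotropic normalization map $x\mapsto T^{-1}_{\beta,\Vert x\Vert_{\mu\beta}^{1/\mu}}x$ carries $\mathbb{R}^n\setminus\{0\}$ into $\partial\Theta^{\mu\beta}_1$.

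\textbf{Step 2 (comparing the gauges and the volume scaling).} The obstacle in item (2) is that the integrands are written with $\Vert\cdot\Vert_\beta$ while the domains are defined through $\Vert\cdot\Vert_{\mu\beta}$. I would record two elementary facts. From $|z_i|^{b_i}\le\Vert z\Vert_\beta^2$ we get $|z_i|^{\mu b_i}\le\Vert z\Vert_\beta^{2\mu}$, hence $\Vert z\Vert_{\mu\beta}^2\le n\,\Vert z\Vert_\beta^{2\mu}$ for all $z$; and $\Vert z\Vert_{\mu\beta}<r$ forces $|z_i|<r^{2/(\mu b_i)}$, whence $\Vert z\Vert_\beta^2<n\,r^{2/\mu}$, so $\Theta^{\mu\beta}_r\subset\Theta^\beta_{R_0}$ with $R_0=\sqrt n\,r^{1/\mu}$. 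Next, assertion 1 of Lemma~\ref{elemntary results} together with $|E^\beta_\rho|=C\rho^{c_\beta}$ yields $|\Theta^\beta_t|\approx t^{c_\beta}$ for every $t>0$; and from $|y_i|\le\Vert y\Vert_\beta^{2/b_i}$ we get $|y|^2\le n\,\Vert y\Vert_\beta^{4/b_{\max}}$ whenever $\Vert y\Vert_\beta\le1$ (the smallest exponent $4/b_{\max}$ dominates because the base is $\le1$).

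\textbf{Step 3 (the two integrals).} For the first integral, using $\Theta^{\mu\beta}_r\subset\Theta^\beta_{R_0}$ and discarding the bounded shell $\Theta^\beta_{R_0}\setminus\Theta^\beta_1$ (where both $|y|^2$ and $\Vert y\Vert_\beta^{-(c_\beta+2\alpha)}$ are bounded), it remains to estimate $\int_{\Theta^\beta_1}|y|^2\Vert y\Vert_\beta^{-(c_\beta+2\alpha)}\,dy\le n\int_{\Theta^\beta_1}\Vert y\Vert_\beta^{4/b_{\max}-c_\beta-2\alpha}\,dy$; splitting $\Theta^\beta_1$ into the anisotropic annuli $\Theta^\beta_{2^{-k}}\setminus\Theta^\beta_{2^{-k-1}}$ and using $|\Theta^\beta_t|\approx t^{c_\beta}$ bounds this by $C\sum_{k\ge0}2^{-k(4/b_{\max}-2\alpha)}$, which converges precisely because $\alpha<2/b_{\max}$. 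For the second integral, the inequality $\Vert z\Vert_{\mu\beta}^2\le n\Vert z\Vert_\beta^{2\mu}$ used contrapositively gives $\mathbb{R}^n\setminus\Theta^{\mu\beta}_r\subset\{\,\Vert y\Vert_\beta\ge(r/\sqrt n)^{1/\mu}\,\}=\mathbb{R}^n\setminus\Theta^\beta_{r'}$ with $r'=(r/\sqrt n)^{1/\mu}>0$; on $\Theta^\beta_1\setminus\Theta^\beta_{r'}$ the integrand is bounded and the domain has finite measure, while on $\mathbb{R}^n\setminus\Theta^\beta_1$ the dyadic decomposition into $\Theta^\beta_{2^{k+1}}\setminus\Theta^\beta_{2^k}$ together with $|\Theta^\beta_t|\approx t^{c_\beta}$ bounds the integral by $C\sum_{k\ge0}2^{-2k\alpha}<\infty$, using $\alpha>0$. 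Summing the pieces proves both finiteness claims.

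\textbf{Main difficulty.} The only delicate point is the bookkeeping forced by the clash between $\Vert\cdot\Vert_\beta$ and $\Vert\cdot\Vert_{\mu\beta}$: one must use the comparison in the correct direction near the origin (to trap $\Theta^{\mu\beta}_r$ inside a $\Vert\cdot\Vert_\beta$-ball) and near infinity (to keep $\Vert\cdot\Vert_\beta$ bounded away from $0$ on the complement), and then verify that the exponents produced by the dyadic sums are exactly $4/b_{\max}-2\alpha>0$ and $2\alpha>0$, which is where the hypotheses $\alpha<2/b_{\max}$ and $\alpha>0$ enter. The remaining ingredients — the volume scaling $|\Theta^\beta_t|\approx t^{c_\beta}$ and the bound $|y|^2\lesssim\Vert y\Vert_\beta^{4/b_{\max}}$ for $\Vert y\Vert_\beta\le1$ — are routine.
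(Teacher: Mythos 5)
Your proof is correct. The algebraic identities and item (1) are handled exactly as the paper does: the componentwise computation $\sum_i |r^{-2/b_i}x_i|^{\mu b_i}=r^{-2\mu}\Vert x\Vert^2_{\mu\beta}$ and the pointwise normalization $r=\Vert x\Vert_{\mu\beta}^{1/\mu}$. For item (2), however, you take a genuinely different route. The paper passes to anisotropic polar coordinates $y=T_{\mu\beta,t}\omega$, $\omega\in\partial\Theta^{\mu\beta}_1$, pulls out the Jacobian factor $t^{c_\beta/\mu-1}$, uses the homogeneity $\Vert T_{\mu\beta,t}\omega\Vert_\beta\approx t^{1/\mu}$ (via assertion~3 of Lemma~\ref{elemntary results}) and the bound $|T_{\mu\beta,t}\omega|^2\le t^{4/(\mu b_{\max})}|\omega|^2$, and so reduces both integrals to one-dimensional ones in $t$ with convergent exponents $\tfrac{2}{\mu}(\tfrac{2}{b_{\max}}-\alpha)-1$ and $-\tfrac{2\alpha}{\mu}-1$. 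You instead compare the two gauges directly (trapping $\Theta^{\mu\beta}_r$ inside a $\Vert\cdot\Vert_\beta$-ball near the origin, and conversely bounding $\Vert\cdot\Vert_\beta$ from below on the complement), invoke the volume scaling $|\Theta^\beta_t|\approx t^{c_\beta}$, and sum over dyadic annuli. The exponents you obtain, $4/b_{\max}-2\alpha$ and $2\alpha$, are the same ones that drive the paper's one-dimensional integrals, so the two arguments are quantitatively equivalent. Your dyadic decomposition has the advantage of avoiding the anisotropic coarea/surface-measure machinery implicit in the paper's polar-coordinate formula; the paper's approach is more compact once that machinery is available and is consistent with how anisotropic polar coordinates are used elsewhere in the article. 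Both are valid; you should only tidy the minor edge case that $\Theta^\beta_{R_0}\setminus\Theta^\beta_1$ may be empty if $R_0\le 1$, which does not affect the argument.
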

\begin{proof}
It is immediate that
$$
T^{-1}_{\beta,r} = T^{}_{\beta,r^{-1}}.
$$
Furthermore, if $x \neq 0$, we have
\begin{eqnarray}
\label{scaling of dilatations 1}
\Vert T^{-1}_{\beta,r} \Vert^{2}_{\mu \beta} = \sum_{i=1}^{n} \left| r^{-2/b_{i}} x_{i}\right |^{\mu b_{i}} = \dfrac{\Vert x \Vert^{2}_{\mu \beta}}{r^{2\mu}}.
\end{eqnarray}
Hence, if $r = \Vert x \Vert_{\mu \beta}^{1/\mu}$ in \eqref{scaling of dilatations 1} we reach \eqref{anisotropic inversion}. To prove \eqref{equivalence between norms on anisotropic sphere 2} we use anisotropic polar coordinates. Indeed, we have
\begin{eqnarray}
\label{scaling properties lemma 1}
\int_{\Theta^{\mu \beta}_{r}} \dfrac{\vert y \vert^{2}}{\Vert y \Vert^{c_{\beta_{}}+2\alpha}} dy & = &  \nonumber \int_{0}^{r}\int_{\partial \Theta^{\mu \beta}_{1}} \dfrac{\vert T_{\mu \beta_{},t}y \vert^{2}}{\Vert T_{\mu \beta_{},t}y \Vert_{\beta}^{c_{\beta_{}}+2\alpha}} t^{\frac{1}{\mu}c_{\beta} - 1} dy \, dt \\ \nonumber
& = & \nonumber \int_{0}^{r}\int_{\partial \Theta^{\mu \beta}_{1}} \dfrac{t^{\frac{1}{\mu}c_{\beta}-1}\vert T_{\mu \beta_{},t}y \vert^{2}}{t^{\frac{c_{\beta} + 2\alpha}{\mu}}\Vert y \Vert_{\beta}^{c_{\beta_{}}+2\alpha}} dy \, dt \\
& \leq & C_{1} \int_{0}^{r}\int_{\partial \Theta^{\mu \beta}_{1}}t^{-\frac{2\alpha}{\mu} -1}\vert T_{\mu \beta_{},t}y \vert^{2}dy \, dt,
\end{eqnarray}
where we have used the assertion 3 of Lemma \ref{elemntary results} to obtain the last inequality. From \eqref{scaling properties lemma 1} and
\begin{eqnarray}
\vert T_{\mu \beta_{},t}y \vert^{2} = \sum_{i=1}^{n} \vert t^{4/\mu b_{i}}y_{i}^{2} \vert \leq  t^{\frac{4}{\mu b_{\max}}} \vert y \vert^{2}, \quad \text{for all} \ (t, y) \in \left[0,1 \right] \times    \partial \Theta^{\mu \beta}_{1},
\end{eqnarray}
we find
\begin{eqnarray}
\int_{\Theta^{\mu \beta}_{r}} \dfrac{\vert y \vert^{2}}{\Vert y \Vert^{c_{\beta_{}}+2\alpha}} dy & \leq & C_{1} \left(\int_{0}^{r} t^{\frac{2}{\mu}\left( \frac{2}{b_{\max}} - \alpha \right)-1} dt \right)\left[ \sup_{1 \leq i \leq n} \left(\int_{\partial \Theta^{\mu \beta}_{1}} y^{2}_{i}\, dy \right)\right] \\ \nonumber
& = & C_{2},
\end{eqnarray}
where $C_{2}>0$ is a constant that depends on $r$, $\mu$, $\beta$, $\alpha$, and $n$. Analogously, we deduce that
\begin{eqnarray}
\int_{\mathbb{R}^{n} \setminus \Theta^{\mu \beta}_{r}} \dfrac{1}{\Vert y \Vert^{c_{\beta_{}}+2\alpha}} dy & = & \nonumber \int_{r}^{+\infty}\int_{\partial \Theta^{\mu \beta}_{1}} \dfrac{t^{\frac{1}{\mu}c_{\beta}-1}}{t^{\frac{c_{\beta} + 2\alpha}{\mu}}\Vert y \Vert_{\beta}^{c_{\beta_{}}+2\alpha}} dy \, dt \\ \nonumber
& \leq & C_{3} \left(\int_{r}^{+\infty} t^{-\frac{2\alpha}{\mu}-1} dt \right),
\end{eqnarray}
for some constant $C_{3}>0$ that depends on $r$, $\mu$, $\beta$, $\alpha$, and $n$. The result is proved.
\end{proof}

Next result reveals the relation between the intrinsic scaling of $\Delta^{\beta, \alpha}$ and $-\gamma$-homogeneous functions. Its proof is a direct consequence of Lemma \ref{scaling properties lemma}, see also Proposition 1 in \cite{CWW}.
\begin{lemma}
\label{scaling lemma on the anisotropic sphere}
Let $0 < \gamma < +\infty$. If $u \in \mathcal{A}_{\gamma}$ is such that
$$
\Delta^{\beta, \alpha}u \geq 1 \quad \text{on} \ \partial \Theta^{\mu \beta}_{1},
$$
for some $\mu >0$. Then we get
\begin{eqnarray}
\Delta^{\beta, \alpha}u(x) \geq \dfrac{1}{\Vert x \Vert_{\mu \beta}^{\left( \frac{\gamma}{\mu} + 2 \frac{\alpha}{\mu} \right)}}\quad \text{in} \ \mathbb{R}^{n} \setminus \left\lbrace 0 \right\rbrace.
\end{eqnarray}
\end{lemma}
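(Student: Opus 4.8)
Write the proof as a scaling argument built on Lemma \ref{scaling properties lemma}. The guiding principle is that $\Delta^{\beta,\alpha}$ is \emph{covariant} under the anisotropic dilations $T_{\beta,r}$: if we rescale the point by $T_{\beta,r}$ and use the $-\gamma$-homogeneity of $u$, the operator picks up exactly the factor $r^{-\gamma-2\alpha}$.

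First I would record the two elementary identities for $T_{\beta,r}$ that make this work. From $T_{\beta,r}e_i = r^{2/b_i}e_i$ one gets
\begin{eqnarray}
\Vert T_{\beta,r}w \Vert_{\beta}^{2} = \sum_{i=1}^{n}\left| r^{2/b_i}w_i \right|^{b_i} = r^{2}\Vert w \Vert_{\beta}^{2}, \qquad \det T_{\beta,r} = r^{\sum_{i=1}^n 2/b_i} = r^{c_{\beta}},
\end{eqnarray}
so $\Vert T_{\beta,r}w\Vert_{\beta} = r\Vert w\Vert_{\beta}$ and the Jacobian of the change of variables $y = T_{\beta,r}z$ is $r^{c_{\beta}}$. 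Combining these with the defining relation $r^{\gamma}u(T_{\beta,r}x) = u(x)$, i.e. $u\circ T_{\beta,r} = r^{-\gamma}u$, a direct substitution $y = T_{\beta,r}z$ in
$$
\Delta^{\beta,\alpha}u(T_{\beta,r}\xi) = C_{\beta,\alpha}\int_{\mathbb{R}^n}\frac{u(T_{\beta,r}\xi) - u(y)}{\Vert T_{\beta,r}\xi - y\Vert_{\beta}^{c_{\beta}+2\alpha}}\,dy
$$
yields, after cancelling $r^{-\gamma}$ from the numerator, $r^{-(c_{\beta}+2\alpha)}$ from the kernel and $r^{c_{\beta}}$ from the Jacobian, the transformation rule
\begin{eqnarray}
\label{scaling covariance}
\Delta^{\beta,\alpha}u(T_{\beta,r}\xi) = r^{-\gamma-2\alpha}\,\Delta^{\beta,\alpha}u(\xi), \qquad \xi \in \mathbb{R}^n\setminus\{0\},\ r>0.
\end{eqnarray}
Here the integrability needed to perform the change of variables (near the singularity $z=\xi$ and at infinity) is exactly what Lemma \ref{scaling properties lemma}, item \eqref{equivalence between norms on anisotropic sphere 2}, provides, together with the $C^{1,1}$-regularity of $u$ at the point in the sense of Definition \ref{C^{1,1} definition}; in the viscosity formulation one argues instead that a $C^2$ function touching $u$ from below at $\xi$ is carried by the linear map $T_{\beta,r}$ to a $C^2$ function touching $u$ from below at $T_{\beta,r}\xi$, so \eqref{scaling covariance} holds as an inequality between viscosity quantities.

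With \eqref{scaling covariance} in hand the conclusion is immediate. Fix $x\in\mathbb{R}^n\setminus\{0\}$ and set $r = \Vert x\Vert_{\mu\beta}^{1/\mu}$. By Lemma \ref{scaling properties lemma}, item \eqref{anisotropic inversion}, the point $\xi := T^{-1}_{\beta,r}x = T_{\beta,r^{-1}}x$ lies on $\partial\Theta^{\mu\beta}_{1}$, hence the hypothesis gives $\Delta^{\beta,\alpha}u(\xi)\geq 1$. Applying \eqref{scaling covariance} with this $r$ and this $\xi$ (so that $T_{\beta,r}\xi = x$),
\begin{eqnarray}
\Delta^{\beta,\alpha}u(x) = r^{-\gamma-2\alpha}\,\Delta^{\beta,\alpha}u(\xi) \geq r^{-\gamma-2\alpha} = \Vert x\Vert_{\mu\beta}^{-\frac{\gamma+2\alpha}{\mu}} = \frac{1}{\Vert x\Vert_{\mu\beta}^{\frac{\gamma}{\mu}+2\frac{\alpha}{\mu}}},
\end{eqnarray}
which is the claimed estimate.

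The only genuinely delicate point is the rigorous justification of \eqref{scaling covariance}: one must know that $\Delta^{\beta,\alpha}u$ is well defined at the relevant points (pointwise, via the absolute convergence guaranteed by Lemma \ref{scaling properties lemma}(2) for functions that are $C^{1,1}$ there and lie in $L^1(\mathbb{R}^n,w)$) and that the substitution and/or the transport of test functions is legitimate; everything else is the bookkeeping of the two scaling factors above. Once the covariance identity is established the lemma follows in one line, exactly as Proposition 1 in \cite{CWW} in the isotropic setting.
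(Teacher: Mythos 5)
Your proof is correct and takes exactly the approach the paper intends: the authors state that Lemma \ref{scaling lemma on the anisotropic sphere} is ``a direct consequence of Lemma \ref{scaling properties lemma}, see also Proposition 1 in \cite{CWW}'', and your argument is precisely that: you establish the scaling covariance $\Delta^{\beta,\alpha}u(T_{\beta,r}\xi) = r^{-\gamma-2\alpha}\Delta^{\beta,\alpha}u(\xi)$ from $\Vert T_{\beta,r}w\Vert_{\beta} = r\Vert w\Vert_{\beta}$, $\det T_{\beta,r}=r^{c_\beta}$, and the $-\gamma$-homogeneity $u\circ T_{\beta,r} = r^{-\gamma}u$, then specialize to $r=\Vert x\Vert_{\mu\beta}^{1/\mu}$ so that $\xi=T_{\beta,r}^{-1}x\in\partial\Theta_1^{\mu\beta}$ by Lemma \ref{scaling properties lemma}(1). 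Your treatment is actually more explicit than the paper's, spelling out the change of variables, the integrability justification via Lemma \ref{scaling properties lemma}(2), and the viscosity-sense version via transport of test functions by the linear map $T_{\beta,r}$, none of which is detailed in the paper.
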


With Lemmas \ref{scaling properties lemma} and \ref{scaling lemma on the anisotropic sphere} at hands we show that there exists a constant $0 < \alpha_{0} < 2/b_{\max}$ depending on $\beta$ and $n$ such that $\mathcal{I}_{\beta\alpha}$ is not empty if $\alpha_{0} < \alpha_{} < 2/b_{\max}$ and $\gamma$ is sufficiently small. Next, we list some technical and geometric facts that we use to build anisotropic barriers. We begin by highlighting some features of a special class of cones in $\mathbb{R}^{n}$. Given $\beta=(b_{1}, \dots, b_{n}) \in \mathbb{R}^{n}_{+}$, $0 < \delta < 1$ and $x \in \partial \Theta^{\beta}_{1}$ we define the anisotropic cone (symmetric set) by
\begin{eqnarray}
\Omega^{\beta, \delta}_{x} := \left \lbrace y \in \mathbb{R}^{n}: \ \vert \langle x, y \rangle_{x}\vert \leq (1 - \delta) \Vert x \Vert_{x} \Vert y \Vert_{x}\right \rbrace,
\end{eqnarray}
where the inner product $\langle y, z \rangle_{x} : = \displaystyle \sum_{i=1}^{n}d_{i}(x) y_{i}z_{i}$ and the natural norm $\Vert y \Vert_{x} = \langle y, y \rangle_{x}^{1/2}$ are defined by
$$
d_{i}(x) :=\left \{
\begin{array}{ll}
\vert x_{i}\vert^{b_{i} - 2}, \ \text{ if } \ x_{i} \neq 0, \\
\\
1, \text{ if } \  x_{i}= 0.
\end{array}
\right.
$$
If $\beta_{0} = (2, \dots, 2)$ we consider $G_{\beta, 1}: \partial \Theta^{\beta}_{1} \rightarrow \partial \Theta^{\beta_{0}}_{1}$ defined by
\begin{eqnarray*}
G_{\beta, 1}(y)= \left(d^{1/2}_{1}(y) y_{1} , \dots, d^{1/2}_{1}(y) y_{n} \right).
\end{eqnarray*}
The map $G_{\beta, 1}$ allows us to establish a fine relation between our anisotropic cones and classical cones, that is,
\begin{eqnarray}
\label{fine geometric asymptotic condition on alpha 0.1}
\Omega^{\beta_{}, \delta}_{x} = \mathcal{G}^{}_{\beta, x}\left(\Omega^{\beta_{0}, \delta}_{G_{\beta, 1}(x)} \right), \quad \text{for all} \ \beta_{} \in \mathbb{R}^{n}_{+} \ \text{and} \ x \in \partial \Theta^{\beta}_{1},
\end{eqnarray}
where $\mathcal{G}^{}_{\beta, x}(y) = \left( d^{-1/2}_{1}(x) y_{1}, \dots, d^{-1/2}_{n}(x) y_{n} \right)$ for all $y \in \mathbb{R}^{n}$. Bearing in mind the change of variables theorem, \eqref{fine geometric asymptotic condition on alpha 0.1} is interesting because help us to estimate integrals on anisotropic cones via integrals on classical cones. For example, given $\mu>0$ and $x \in \partial \Theta^{\mu \beta}_{1}$ we have
\begin{eqnarray}
\label{integral on anisotropic cones via integral on classcal cones}
\int_{\Omega^{\mu \beta, \delta_{}}_{x}\cap \Theta^{\beta}_{r_{1}}} f(y) dy & = & \int_{\Omega^{\beta_{0}, \delta_{}}_{\tilde{x}}\cap \mathcal{G}^{-1}_{\mu \beta,x}(\Theta^{\beta}_{r_{1}})} f(\mathcal{G}^{}_{\mu \beta,x}(y))\vert \det \mathcal{G}^{}_{\mu \beta, x}(y) \vert dy \\ \nonumber
& = & \int_{\Theta^{\beta}_{r_{1}}} f(y) dy - \int_{\Theta^{\beta}_{r_{1}}} f(y)\chi_{\left((\mathbb{R}^{n} \setminus \Omega^{\beta_{0}, \delta_{}}_{\tilde{x}}) \cap \mathcal{G}^{-1}_{\mu \beta,x}(\Theta^{\beta}_{r_{1}})\right)}(y) dy \\ \nonumber
& \geq & \int_{\Theta^{\beta}_{r_{1}}} f(y) dy - \int_{\Theta^{\beta}_{r_{1}}} f(y)\chi_{ \left( \mathbb{R}^{n} \setminus \Omega^{\beta_{0}, \delta_{}}_{\tilde{x}}\right)}(y) dy,
\end{eqnarray}
for every $0 < r_{1} < 1$ and $f \in L^{1}_{loc}\left( \mathbb{R}^{n} \right)$, where we have denoted by $\tilde{x} = G_{\mu\beta, 1}(x)$.
Another crucial step to find $\gamma \in \mathcal{I}_{\beta\alpha}$ is to estimate from below (universally) the integral of the quadratics functions $y^{2}_{i}$ for $b_{i}=b_{\max}$ on $\partial\Theta^{\beta_{}}_{1}$. In fact, let
$$
\mathcal{S}_{0}^{\beta_{},n} = \left\lbrace y \in \partial\Theta^{\beta_{0}}_{1}: \sum_{b_{i}= b_{\max}}^{} y^{2}_{i} \geq \frac{1}{n} \right\rbrace \quad \text{and} \quad \mathcal{S}^{\beta,n}_{} = G_{\beta,2}^{}(\mathcal{S}^{\beta_{0},n}_{0}),
$$
where $G_{\beta,2}: \partial\Theta^{\beta_{0}}_{1} \rightarrow \partial\Theta^{\beta_{}}_{1}$ is natural map given by
\begin{eqnarray*}
G_{\beta,2}(y)= \left(y_{1}^{2/b_{1}}, \dots, y_{n}^{2/b_{n}} \right).
\end{eqnarray*}
Clearly,
\begin{eqnarray*}
\inf_{\mathcal{S}^{\beta,n}_{}} \sum_{b_{i}= b_{\max}}^{} y^{2}_{i} \geq c_{0}
\end{eqnarray*}
for some $c_{0}>0$ depending on $\beta$ and $n$, and therefore we deduce that
\begin{eqnarray}
\label{bound from below on S_beta_delta_zero}
\int_{\partial\Theta^{\beta}_{1}} \sum_{b_{i}= b_{\max}}^{} y^{2}_{i} dy \geq c_{0}\vert \mathcal{S}^{\beta,n} \vert.
\end{eqnarray}
We now choose $1/2 < \delta_{0} < 1$ (depending on $\beta$ and $n$) such that
\begin{eqnarray}
\label{geometric asymptotic condition on alpha}
\vert \left(\mathbb{R}^{n} \setminus \Omega^{\beta_{0},\delta_{0}}_{x} \right) \cap \partial\Theta^{\beta_{}}_{1} \vert \leq \dfrac{c_{0}\vert \mathcal{S}^{\beta,n} \vert }{2n}, \quad \text{for all} \ x \in \partial\Theta^{\beta_{0}}_{1}.
\end{eqnarray}
For $\gamma > 0$ and $r>0$ let us introduce the following notation
\begin{eqnarray}
\label{tilde notation}
\tilde{\gamma} = \dfrac{\gamma}{\mu_{\beta}},\quad \tilde{\beta} = \mu_{\beta} \beta, \quad  \text{and} \quad \tilde{\Theta}_{r} = \Theta^{\tilde{\beta}}_{r},
\end{eqnarray}
where $\mu_{\beta}$ is a natural number such that $\Vert x \Vert_{\tilde{\beta}}$ is a $C^{2}$-function in $\mathbb{R}^{n} \setminus \left\lbrace 0 \right\rbrace$ and
\begin{eqnarray}
\label{condition on mu beta geometric asymptotic condition on alpha}
 \left( 1 - \delta_{0} \right)^{2} - 1 + \dfrac{1}{\tilde{b}_{\max}} < 0.
\end{eqnarray}
Furthermore, given $x_{0} \in \partial \tilde{\Theta}_{1}$ and $c_{1} \in \left(0, 1 \right)$ let
\begin{eqnarray}
\mathcal{B}_{x_{0}} := L^{-1}_{\tilde{\beta}}\left( \Omega^{\tilde{\beta}, \delta_{0}}_{x} \right) \cap \tilde{\Theta}_{c_{1}},
\end{eqnarray}
where $L_{\tilde{\beta}}: \mathbb{R}^{n} \rightarrow \mathbb{R}^{n}$ is the linear map defined by $L_{\tilde{\beta}} e_{i} =  \tilde{b}_{i} e_{i}$. Let us to make a technical observation on the set $\mathcal{B}_{x_{0}}$ which will be useful in our construction of barriers. From the relation
\begin{eqnarray}
\Theta_{r_{1}} \subset \tilde{\Theta}_{r_{1}} \subset L^{}_{\tilde{\beta}}(\tilde{\Theta}_{c_{1}}), \quad \text{for} \ r_{1} = \tilde{b}^{\tilde{b}_{\min}/2}_{\min}c_{1} < 1,
\end{eqnarray}
and change of variables it follows that
\begin{eqnarray}
\label{technical barrier integral estimate}
\int_{\mathcal{B}_{x}} \dfrac{y^{2}_{i}}{\Vert y \Vert^{c+2\alpha}} dy & \geq & m_{0}\int_{\Omega^{\tilde{\beta}, \delta_{0}}_{x}\cap \Theta_{r_{1}}} \dfrac{y^{2}_{i_{}}}{\Vert y \Vert^{c+2\alpha}} dy,
\end{eqnarray}
for some $m_{0} > 0$ depending on $\tilde{\beta}$ and $n$. The next technical lemma is also fundamental to get our barrier.
\begin{lemma}
\label{technical lemma barrier}
Let $i_{0} \in \left\lbrace 1, \dots, n \right\rbrace$. There exist constants $a_{ij}\in \mathbb{R}$ depending on $n$ and $i_{0}$ such that $a_{ii} > 0$ and
\begin{eqnarray}
\label{technical lemma barrier inequality}
y^{2}_{i_{0}} = \sum_{i=1}^{n} a_{ii}y_{i}^{2} + \sum_{i\neq j}^{} a_{ij} y_{i}y_{j}.
\end{eqnarray}
\end{lemma}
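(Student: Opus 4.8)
The plan is to prove the identity \eqref{technical lemma barrier inequality} by an explicit, hands--on choice of the coefficients $a_{ij}$, followed by a direct expansion. The point to keep in mind is that the right--hand side of \eqref{technical lemma barrier inequality} carries $n$ free diagonal parameters $a_{ii}$ and $n(n-1)$ free off--diagonal parameters $a_{ij}$, while matching it to $y_{i_0}^2$ imposes only finitely many linear constraints; this surplus of parameters is exactly what lets us force $a_{ii}>0$ at \emph{every} index $i$, which is why the resulting constants must depend on $n$ and on $i_0$. I would realise the right--hand side as a combination of squares of linear forms: by the polarization identity
\begin{eqnarray*}
y_iy_j=\tfrac14\big[(y_i+y_j)^2-(y_i-y_j)^2\big],
\end{eqnarray*}
any expression $\sum_k\lambda_k\,\ell_k(y)^2$ with $\lambda_k>0$ has the shape $\sum_i a_{ii}y_i^2+\sum_{i\neq j}a_{ij}y_iy_j$, and its diagonal coefficient $a_{ii}=\sum_k\lambda_k(\text{coeff. of }y_i\text{ in }\ell_k)^2$ is strictly positive as soon as $y_i$ occurs in at least one $\ell_k$.

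Thus I would pick linear forms $\ell_1,\dots,\ell_m$ so that every coordinate $y_i$ appears in at least one of them, and then tune the weights $\lambda_k>0$ — using, where it is needed, the defining relation $\sum_i y_i^2=1$ of $\partial\Theta^{\beta_0}_1$, on which these coordinates are evaluated in all the applications — so that the combination collapses to $y_{i_0}^2$ after the spurious monomials $y_k^2$ ($k\neq i_0$) and $y_ky_l$ ($k\neq l$) cancel. Reading off $a_{ij}$ from this representation and expanding everything back out is then a finite bookkeeping of monomials, and gives precisely \eqref{technical lemma barrier inequality} with all $a_{ii}>0$.

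The main (and essentially only) obstacle is the tension between the two requirements above: insisting that every $\ell_k$ genuinely involve all the coordinates pushes positive mass onto every $a_{ii}$, but it simultaneously produces cross--monomials that have to be cancelled on the way down to $y_{i_0}^2$, and it is this cancellation that pins down the $\lambda_k$ (equivalently the off--diagonal $a_{ij}$); one must check that the linear system for the $\lambda_k$ remains solvable with all weights positive. Once \eqref{technical lemma barrier inequality} is in hand it is used exactly as in \eqref{technical barrier integral estimate}: integrating it against $\Vert y\Vert^{-c-2\alpha}$ over the sign--symmetric sets $\Theta_{r_1}$ and $\partial\Theta^{\beta}_1$ annihilates every off--diagonal contribution, and the strict positivity $a_{ii}>0$ turns the geometric estimate \eqref{bound from below on S_beta_delta_zero} on $\int_{\partial\Theta^{\beta}_1}\sum_{b_i=b_{\max}}y_i^2\,dy$ into the lower bound on $\int_{\mathcal{B}_x}y_{i_0}^2\,\Vert y\Vert^{-c-2\alpha}\,dy$ that the barrier construction requires.
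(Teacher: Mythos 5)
Your approach --- realising $y_{i_0}^2$ as a positive combination $\sum_k\lambda_k\,\ell_k(y)^2$ of squares of linear forms with all $\lambda_k>0$ and with every coordinate $y_i$ occurring in at least one $\ell_k$ --- cannot be carried out, and the ``obstacle'' you flag at the end is fatal rather than a matter of bookkeeping. Evaluate the would-be identity at $y=e_i$ for any $i\neq i_0$: the left-hand side is $0$, while the right-hand side is $\sum_k\lambda_k\,\ell_k(e_i)^2\geq 0$, which vanishes only if $\ell_k(e_i)=0$ for every $k$, i.e.\ only if $y_i$ occurs in none of the $\ell_k$. Said directly in terms of \eqref{technical lemma barrier inequality}: read as an identity of quadratic forms in $y$, comparing coefficients forces $a_{ii}=\delta_{ii_0}$ and $a_{ij}=0$, so $a_{ii}>0$ necessarily fails for every $i\neq i_0$. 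Invoking $\sum_iy_i^2=1$ does not repair this --- the coordinate vectors $e_i$ lie on that sphere, so the same evaluation gives the same contradiction --- and in any case the integrals into which \eqref{technical lemma barrier inequality} is inserted (over $\mathcal{B}_{x_0}$ and $\Omega^{\tilde\beta,\delta_0}_{x_0}\cap\Theta_{r_1}$ against $\Vert y\Vert^{-c-2\alpha}$) are over solid anisotropic regions, not the Euclidean unit sphere, so no constraint of that kind is in force there.

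The paper's own proof is a one-line rotation that sidesteps this impossibility rather than confronting it: take an orthogonal $T_{i_0}$ with $T_{i_0}e_{i_0}=(1/\sqrt n,\dots,1/\sqrt n)$, so that $y_{i_0}=\langle y,e_{i_0}\rangle=\langle T_{i_0}y,T_{i_0}e_{i_0}\rangle=\tfrac1{\sqrt n}\sum_k(T_{i_0}y)_k$ and hence
\[
y_{i_0}^2=\frac1n\Bigl(\sum_k(T_{i_0}y)_k\Bigr)^2=\frac1n\sum_k(T_{i_0}y)_k^2+\frac1n\sum_{k\neq l}(T_{i_0}y)_k(T_{i_0}y)_l,
\]
with diagonal coefficients identically $1/n>0$. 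The crucial point --- which your evaluation argument above shows is unavoidable --- is that the quadratic expansion is in the \emph{rotated} coordinates $(T_{i_0}y)_k$, not in the original $y_k$. So your plan is not a variant of the paper's argument; it is a genuinely different strategy, and one that runs exactly into the obstruction the rotation is designed to evade. A repaired version of your write-up would have to replace the polarization scheme by a change of coordinates of this type and then carry that change through the subsequent integral estimates.
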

\begin{proof}
Notice that
\begin{equation}
y^{2}_{i_{0}} = \langle y, e_{i_{0}}\rangle^{2} = \langle T_{i_{0}}y, T_{i_{0}} e_{i_{0}}\rangle^{2},
\end{equation}
where $T_{i_{0}}: \mathbb{R}^{n} \rightarrow \mathbb{R}^{n}$ is an orthogonal linear map such that
$T_{i_{0}}e_{i_{0}} = \left(1/\sqrt{n}, \dots, 1/\sqrt{n} \right)$.
\end{proof}

Hereafter, the symbol $C_{0}$ will denote a positive constant depending on $\beta$ and $n$ for which $\Vert \cdot \Vert$ satisfies the quasi-triangular inequality condition, that is,
\begin{eqnarray}
\Vert x + y \Vert \leq C_{0}\left(\Vert x \Vert + \Vert y \Vert \right), \quad \text{for all} \ x,y \in \mathbb{R}^{n}.
\end{eqnarray}

\begin{lemma}[Barrier function]
\label{Bar. func.}
Given $\gamma >0$ and $\beta=\left(b_{1}, \dots, b_{n} \right) \in \mathbb{R}^{n}_{+}$, let $u_{\gamma}: \mathbb{R}^{n} \rightarrow \mathbb{R}$ defined by $u_{\gamma}(x) = \Vert x \Vert_{\tilde{\beta}}^{-\tilde{\gamma}}$. There exist constants $0 < \gamma < 1/2$ and $0 < \alpha_{0} < 2/b_{\max}$ depending on $\beta$ and $n$ such that
\begin{eqnarray}
\label{barrier function inequality}
\Delta^{\beta, \alpha} u_{\gamma} \geq c_{0} \quad \text{on} \ \partial \tilde{\Theta}_{1}, \quad \text{if} \ \alpha_{0} < \alpha < 2/b_{\max},
\end{eqnarray}
for some $c_{0} > 0$ depending on $\beta$ and $n$. In particular, $u_{\gamma} \in \mathcal{A}^{+}_{\gamma}$.
\end{lemma}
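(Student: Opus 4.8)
The plan is to exploit the $-\gamma$-homogeneity of $u_\gamma$ and the intrinsic scaling of $\Delta^{\beta,\alpha}$ to reduce the inequality to a single, arbitrary point of $\partial\tilde\Theta_1$, and then to isolate the only piece of the integral defining $\Delta^{\beta,\alpha}u_\gamma(x_0)$ that does not vanish after multiplication by the degenerating constant $C_{\beta,\alpha}=2/b_{\max}-\alpha$ as $\alpha\uparrow 2/b_{\max}$, showing that this piece is positive and bounded away from $0$ uniformly over $\partial\tilde\Theta_1$. The easy facts come first: by the choice of $\mu_\beta$, $\|\cdot\|_{\tilde\beta}\in C^2(\mathbb R^n\setminus\{0\})$, hence $u_\gamma\in C^2(\mathbb R^n\setminus\{0\})$; the identity $\|T_{\beta,r}x\|_{\tilde\beta}=r^{\mu_\beta}\|x\|_{\tilde\beta}$ gives $r^{\gamma}u_\gamma(T_{\beta,r}x)=u_\gamma(x)$, so $u_\gamma\in\mathcal A^+_\gamma$; and $\gamma<1/2$ forces $\tilde\gamma<c_{\tilde\beta}$, so $u_\gamma\in L^1(\mathbb R^n,w)$ and $\Delta^{\beta,\alpha}u_\gamma(x_0)$ is classically defined for every $x_0\in\partial\tilde\Theta_1$. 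Since every estimate below will be uniform over the compact set $\partial\tilde\Theta_1$, it is enough to prove $\Delta^{\beta,\alpha}u_\gamma(x_0)\ge c_0>0$ for a fixed but arbitrary $x_0\in\partial\tilde\Theta_1$, with $c_0$ depending only on $\beta,n$ (Lemma \ref{scaling lemma on the anisotropic sphere} then upgrades this, if wanted, to a supersolution statement in all of $\mathbb R^n\setminus\{0\}$).

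Next, after translating $z=y-x_0$ and using that $\|\cdot\|_\beta$ is even — which, over the symmetric set $\Theta^\beta_{r_1}$, annihilates the first-order term of a Taylor expansion inside the principal value — I would split $\Delta^{\beta,\alpha}u_\gamma(x_0)=C_{\beta,\alpha}(A_1+A_2)$, with $A_1=\mathrm{PV}\int_{\Theta^\beta_{r_1}}\frac{u_\gamma(x_0)-u_\gamma(x_0+z)}{\|z\|_\beta^{c+2\alpha}}\,dz$ for a small $r_1=r_1(\beta,n)$ (chosen so $u_\gamma$ is $C^2$ with derivatives bounded uniformly on $x_0+\overline{\Theta^\beta_{r_1}}$ as $x_0$ runs over $\partial\tilde\Theta_1$) and $A_2$ the tail. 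Bounding $|u_\gamma(x_0)-u_\gamma(x_0+z)|\le 1+u_\gamma(x_0+z)$ and splitting the tail into a neighbourhood of the origin (where $u_\gamma$ is integrable since $\tilde\gamma<c_{\tilde\beta}$ and $\|z\|_\beta$ is bounded below) and a neighbourhood of infinity (where $u_\gamma$ decays and $\|z\|_\beta^{-(c+2\alpha)}$ is integrable at infinity) gives $|A_2|\le M_0(\beta,n,\gamma,r_1)$ uniformly for $\alpha$ in a fixed interval and $x_0\in\partial\tilde\Theta_1$, hence $C_{\beta,\alpha}A_2\to 0$. For $A_1$, a second-order Taylor expansion with Lagrange remainder and the continuity of $D^2u_\gamma$ reduce $A_1$, up to an error which (once $r_1$ is fixed small) is a small multiple of $\int_{\Theta^\beta_{r_1}}\frac{|z|^2}{\|z\|_\beta^{c+2\alpha}}\,dz$, to
$$A_1 \ \sim\ -\tfrac12\int_{\Theta^\beta_{r_1}}\frac{z^\top D^2u_\gamma(x_0)\,z}{\|z\|_\beta^{c+2\alpha}}\,dz ;$$
anisotropic polar coordinates (Lemma \ref{scaling properties lemma}) then show that, after multiplication by $C_{\beta,\alpha}$ and passage to the limit $\alpha\uparrow 2/b_{\max}$, only the integrals $\int_{\Theta^\beta_{r_1}}\frac{z_i^2}{\|z\|_\beta^{c+2\alpha}}dz$ with $b_i=b_{\max}$ survive, the remaining ones carrying a factor that tends to $0$.

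The crux is then the lower bound $\liminf_{\alpha\uparrow 2/b_{\max}}\big(-\tfrac12 C_{\beta,\alpha}\int_{\Theta^\beta_{r_1}}\frac{z^\top D^2u_\gamma(x_0)z}{\|z\|_\beta^{c+2\alpha}}dz\big)\ge c_0>0$, uniform in $x_0$ — this is where Caffarelli–Silvestre's method (\cite{CS}) is brought in. Writing $u_\gamma=\rho^{-\tilde\gamma}$ with $\rho=\|\cdot\|_{\tilde\beta}$, the structural fact — the \emph{anisotropic convexity estimate} — is that, once $\mu_\beta$ is large enough that \eqref{condition on mu beta geometric asymptotic condition on alpha} holds and $\delta_0$ is chosen as in \eqref{geometric asymptotic condition on alpha}, $u_\gamma$ is strictly concave on the thin anisotropic cones $\mathcal B_{x_0}=L^{-1}_{\tilde\beta}(\Omega^{\tilde\beta,\delta_0}_{x_0})\cap\tilde\Theta_{c_1}$ with a universal bound $-z^\top D^2u_\gamma(x_0)z\gtrsim \tilde\gamma\sum_{b_i=b_{\max}}z_i^2$ there, while $z^\top D^2u_\gamma(x_0)z\lesssim \tilde\gamma|z|^2$ everywhere; at points $x_0$ lying near a coordinate pole, where $\|\cdot\|_{\tilde\beta}$ is flat and a naive diagonal estimate would degenerate, Lemma \ref{technical lemma barrier} (which rewrites a single squared coordinate as a quadratic form with strictly positive diagonal) is precisely what keeps the constant independent of $x_0$. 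Splitting $\Theta^\beta_{r_1}$ into $\mathcal B_{x_0}$ and its complement, bounding the good part from below via \eqref{technical barrier integral estimate}, \eqref{integral on anisotropic cones via integral on classcal cones} and then \eqref{bound from below on S_beta_delta_zero}--\eqref{geometric asymptotic condition on alpha} — which together say that $\sum_{b_i=b_{\max}}\omega_i^2$ integrates to at least $\tfrac12 c_0|\mathcal S^{\beta,n}|$ over the relevant isotropic cone $\Omega^{\beta_0,\delta_0}_{\tilde x}$, uniformly in $x_0$ — and controlling the bad part by the small measure $|(\mathbb R^n\setminus\Omega^{\beta_0,\delta_0}_{\tilde x})\cap\partial\Theta^\beta_1|\le\tfrac{c_0|\mathcal S^{\beta,n}|}{2n}$, the good contribution dominates (the factor $2n$ in \eqref{geometric asymptotic condition on alpha} being exactly what makes it win), giving $\Delta^{\beta,\alpha}u_\gamma\ge c_0$ on $\partial\tilde\Theta_1$ for all $\alpha_0<\alpha<2/b_{\max}$, with $\alpha_0$ taken, in addition, so that $|C_{\beta,\alpha}A_2|$ is below half of the good contribution on that interval.

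The main obstacle is precisely the anisotropic convexity estimate, and above all its uniformity over $x_0\in\partial\tilde\Theta_1$: $\rho=\|\cdot\|_{\tilde\beta}$ is genuinely flat in the tangential directions at the coordinate poles, so $D^2\rho$ — and hence any crude lower bound on $-z^\top D^2 u_\gamma(x_0)z$ — degenerates there, and it is only after dilating the exponents $\beta\rightsquigarrow\tilde\beta=\mu_\beta\beta$, restricting to the cones $\Omega^{\tilde\beta,\delta_0}_{x_0}$ (whose complements on $\partial\Theta^\beta_1$ are made negligible relative to $\int_{\partial\Theta^\beta_1}\sum_{b_i=b_{\max}}\omega_i^2\,d\omega$ by the choice of $\delta_0$), and correcting for the mismatch between the true radial direction $\nabla\rho(x_0)$ and the direction defining the cones — the role of the map $L_{\tilde\beta}$ — that an $x_0$-independent positive bound survives. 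The change-of-variables bookkeeping in \eqref{integral on anisotropic cones via integral on classcal cones} and \eqref{technical barrier integral estimate}, which transfers integrals over anisotropic cones to integrals over classical ones, is the other delicate point; the tail estimate, the second-order expansion, and the anisotropic polar coordinates are all routine.
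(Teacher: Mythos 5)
Your outline captures the paper's strategy — reduce to $\partial\tilde\Theta_1$ by scaling, split the integral, use anisotropic concavity of $u_\gamma$ on the cone $\mathcal{B}_{x_0}$, invoke Lemma \ref{technical lemma barrier} to make the estimate uniform in $x_0$, and pass $\alpha\uparrow 2/b_{\max}$ using the degeneracy of $C_{\beta,\alpha}$ — and you have correctly identified the obstruction at the coordinate poles and the role of the thin cones. However, the full-ball Taylor reduction that you use to set up $A_1$ introduces a structural flaw that the subsequent cone argument, as you order the steps, cannot repair.

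Writing $A_1\sim -\tfrac12\int_{\Theta^\beta_{r_1}}\frac{z^\top D^2u_\gamma(x_0)z}{\|z\|_\beta^{c+2\alpha}}\,dz$ and observing that the off-diagonal integrals $\int_{\Theta^\beta_{r_1}}\frac{z_iz_j}{\|z\|_\beta^{c+2\alpha}}\,dz$ vanish by the coordinate-reflection symmetry of the ball, while among the diagonal ones only $\int\frac{z_i^2}{\|z\|_\beta^{c+2\alpha}}\,dz$ with $b_i=b_{\max}$ survive the $C_{\beta,\alpha}$-limit, forces $\lim_{\alpha\uparrow 2/b_{\max}}C_{\beta,\alpha}A_1 = -\tfrac{K}{2}\sum_{b_i=b_{\max}}\partial^2_{ii}u_\gamma(x_0)$ for a fixed $K>0$. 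But the explicit Hessian of $u_\gamma=\|\cdot\|_{\tilde\beta}^{-\tilde\gamma}$ shows that $\partial^2_{ii}u_\gamma(x_0)$ carries the factor $|x_{0,i}|^{\tilde b_i-2}$, which is $0$ whenever $x_{0,i}=0$; so the sum is exactly zero at any $x_0\in\partial\tilde\Theta_1$ with $x_{0,i}=0$ for every $i$ with $b_i=b_{\max}$ (e.g.\ $x_0=e_{i_0}$ whenever $b_{i_0}<b_{\max}$), and the claimed uniform lower bound fails. Lemma \ref{technical lemma barrier} cannot rescue this \emph{over the symmetric ball}: the off-diagonal terms $a_{ij}y_iy_j$ it introduces — which in the paper are precisely what transfer the non-degenerate weight from the chosen index $i_0$ (with $|x_{0,i_0}|\ge (1/n)^{2/\tilde b_{i_0}}$) to the $b_{\max}$-directions whose radial integrals blow up like $1/C_{\beta,\alpha}$ — are exactly what the ball symmetry has already annihilated. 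The paper accordingly never symmetrizes over a ball: it writes $\Delta^{\beta,\alpha}u_\gamma(x_0)/C_{\beta,\alpha}=I_1+I_2$ with $I_1$ a second-difference integral taken over $\mathcal{B}_{x_0}$ alone (so the cone geometry and Lemma \ref{technical lemma barrier} act on the un-symmetrized quadratic form), then splits $I_2$ into an intermediate Taylor-estimated piece $I_3$ and a far tail $I_4$, each bounded in absolute value by $\tilde\gamma$ times a small constant. You should drop the full-ball reduction and the "off-diagonals vanish" step entirely, and run the concavity estimate, Lemma \ref{technical lemma barrier}, and the cone-measure estimate \eqref{geometric asymptotic condition on alpha} directly inside the cone integral from the outset.
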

\begin{proof}
Let $x_{0} \in \partial \tilde{\Theta}_{1}$ and $c_{1} \in \left(0,1 \right)$. Since $\mathcal{B}_{x_{0}}$ is symmetric, the strategy to prove \eqref{barrier function inequality} is to write
\begin{eqnarray}
\label{barrier function inequality 1}
\dfrac{\Delta^{\beta, \alpha} u_{\gamma}(x_{0})}{C_{\beta, \alpha}}& = & \nonumber -\dfrac{1}{2}\displaystyle \int_{\mathcal{B}_{x_{0}}} \dfrac{u_{\gamma}\left( x_{0} + y \right) + u_{\gamma}\left( x_{0} - y \right) - 2}{\Vert y \Vert^{c_{} + 2\alpha}} dy + \displaystyle \int_{\mathbb{R}^{n} \setminus \mathcal{B}_{x_{0}}} \dfrac{1 - u_{\gamma}\left( x_{0} + y\right)}{\Vert y \Vert^{c_{} + 2\alpha}} dy \\
& = & I_{1} + I_{2},
\end{eqnarray}
and show there exist constants $c_{0} > 0$ and $\alpha_{0}>0$ depending $\beta$ and $n$ such that $I_{1} \geq c_{0}$ and $ I_{2}  \geq - c_{0}/2$ for some constant $c_{0}=c_{0}(\beta, n)>0$, if $\alpha > \alpha_{0}$ and $\gamma$ is sufficiently small. We begin by estimating $I_{1}$ by bellow. Indeed, from Lemma 2.12 in \cite{LEITAO LP}, $u_{\gamma}$ is a function in $C^{2}(\mathbb{R}^{n}\setminus \left\lbrace 0 \right\rbrace)$ such that
\begin{eqnarray}
\partial^{2}_{ij}u_{\gamma}(x):= \left \{
\begin{array}{ll}
\dfrac{\tilde{\gamma}(\tilde{\gamma}+2)}{4}\Vert x \Vert_{\tilde{\beta}}^{\tilde{\gamma} - 4} (\tilde{b}_{i}\vert x_{i}\vert^{\tilde{b}_{i}-2} x_{i}) (\tilde{b}_{j}\vert x_{j}\vert^{\tilde{b}_{j}-2} x_{j}), \ \text{ if } \ i \neq j, \\
\\
\dfrac{\tilde{\gamma}}{4}\Vert x \Vert_{\tilde{\beta}}^{-\tilde{\gamma} - 4} \left \lbrace \tilde{b}^{2}_{i}\vert x_{i} \vert^{2(\tilde{b}_{i} -1)} (\tilde{\gamma} + 2) - \Vert x \Vert^{2}_{\tilde{\beta}} \left [ 2\tilde{b}_{i}(\tilde{b}_{i} - 1) \vert x_{i} \vert^{\tilde{b}_{i}-2}\right] \right \rbrace, \\
 \text{ if } \  i = j,
\end{array}
\right.
\end{eqnarray}
for all $x \neq 0$. From item 3 of Lemma 2.12 in \cite{LEITAO LP}, given $y \in \mathcal{B}_{x_{0}}$, there exists a constant $C>0$ depending on $\beta$ and $n$ such that $G_{\gamma}(t) = u_{\gamma}(x_{0} + ty)$ satisfies
\begin{eqnarray}
\label{Bar. func. second derivat est.}
G_{\gamma}''(t) \leq \tilde{\gamma} \Vert x + t y \Vert_{\tilde{\beta}}^{\tilde{\gamma} - 4} \left \lbrace \displaystyle \sum_{i=1}^{n} \vert x_{i} \vert^{\tilde{b}_{i} -2}\tilde{b}^{2}_{i}\left[\left( \tilde{\gamma} + 2 \right) (1 - c_{2})^{2} - \dfrac{2}{\tilde{b}_{i}}(\tilde{b}_{i} - 1) + c_{1} C\right]y^{2}_{i} \right \rbrace,
\end{eqnarray}
for all $t\in \left(-1, 1 \right)$, where $c_{2} = \delta_{0}$. Since
\begin{eqnarray}
\label{c1 small and c2 almost 1}
\left( \tilde{\gamma} + 2 \right) (1 - c_{2})^{2} - \dfrac{2}{\tilde{b}_{i}}(\tilde{b}_{i} - 1) + c_{1} C & = & \nonumber  \tilde{\gamma}(1 - c_{2})^{2} + \left[2(1 - c_{2})^{2} - 2 + \dfrac{2}{\tilde{b}_{i}} \right] + c_{1} C \\ \nonumber
& \leq &  \tilde{\gamma}(1 - c_{2})^{2} + \left[2(1 - c_{2})^{2} - 2 + \dfrac{2}{\tilde{b}_{\max}}\right] + c_{1} C. \\ \nonumber
\end{eqnarray}
we can choose $0 < \tilde{\gamma} < 1/2 $ and $0 < c_{1} < 1/2$ depending on $\beta$ and $n$ to get
\begin{eqnarray}
\label{c1 small and c2 almost 1  2}
\left( \tilde{\gamma} + 2 \right) (1 - c_{2})^{2} - \dfrac{2}{\tilde{b}_{i}}(\tilde{b}_{i} - 1) + c_{1} C \leq (1 - c_{2})^{2} - 1 + \dfrac{1}{\tilde{b}_{\max}} < 0
\end{eqnarray}
and $\Vert  x + t y \Vert_{\tilde{\beta}} \geq c_{3}$, where $c_{3}=c_{3}(\beta,n)> 0$ is a constant. Thus, from \eqref{Bar. func. second derivat est.} it follows that
\begin{eqnarray}
\label{concavity}
G_{\gamma}''(t) & \leq & \nonumber \tilde{\gamma} \left[ \left( \dfrac{\tilde{c_{3}}^{(\tilde{\gamma}- 4)}\tilde{b}^{2}_{\min}}{4}\right)\left( (1 - c_{2})^{2} - 1 + \dfrac{1}{\tilde{b}_{\max}} \right) \right] \Vert y \Vert^{2}_{x} \\
& \leq & -\tilde{\gamma} c_{4} \Vert y \Vert^{2}_{x},
\end{eqnarray}
where $c_{4}= c_{4}(\beta,n)>0$ is a constant. We now use the mean value Theorem for difference quotients to obtain $t_{0} \in (-1, 1)$ such that
\begin{eqnarray}
\label{mean value for difference quotient}
G_{\gamma}(1) + G_{\gamma}(-1) - 2G_{\gamma}(0) = G_{\gamma}''(t_{0}).
\end{eqnarray}
Choose
$$
\vert x_{i_{0}} \vert \geq \left(\dfrac{1}{n} \right)^{2/\tilde{b}_{i_{0}}},
$$
and combine \eqref{integral on anisotropic cones via integral on classcal cones}-\eqref{geometric asymptotic condition on alpha}, \eqref{technical barrier integral estimate}, \eqref{concavity}, \eqref{mean value for difference quotient}, and Lemma \ref{technical lemma barrier} to find
\begin{eqnarray}
\int_{\mathcal{B}_{x_{0}}} \dfrac{y^{2}_{i_{0}}}{\Vert y \Vert^{c+2\alpha}} dy & \geq &  \nonumber  \sum_{i=1}^{n} \int_{\left(\Omega^{\tilde{\beta}, \delta_{0}}_{x_{0}}\cap \Theta_{r_{1}} \right)} \dfrac{m_{0}a_{ii} y^{2}_{i}}{\Vert y \Vert^{c+2\alpha}} dy + \sum_{i\neq j}^{} \int_{\left(\Omega^{\tilde{\beta}, \delta_{0}}_{x_{0}}\cap \Theta_{r_{1}}\right)} \dfrac{m_{0}a_{ij}y_{i}y_{j}}{\Vert y \Vert^{c+2\alpha}} dy \\ \nonumber
&\geq & \dfrac{c_{5}\vert \mathcal{S}^{\beta,n}_{} \vert}{2q_{\max}} - C_{6}\sum_{i\neq j}^{}\frac{1}{\left(\frac{2}{b_{i}} + \frac{2}{b_{j}} - \alpha \right)},
\end{eqnarray}
where $c_{5}=c_{5}(\beta, n)> 0$ and $C_{6} = C_{6}(\beta, n)> 1$ are constants. Letting $\alpha \rightarrow 2/ b_{\max}$, we obtain
\begin{eqnarray}
\label{barrier function inequality 2}
C_{\beta, \alpha} I_{1} \geq \tilde{\gamma} \dfrac{c_{4}c_{5}}{2}>0.
\end{eqnarray}
We now turn our efforts towards to estimate $I_{2}$ by below. Let us consider
\begin{eqnarray}
M := \dfrac{1}{\tilde{\gamma}}\sum_{i,j=1}^{n} \sup_{y \in \tilde{\Theta}_{1/4C_{0}}} \left | \partial^{2}_{ij}u_{\gamma}(x_{0} + y)\right|.
\end{eqnarray}
Note that $M >0$ does not depend on $\tilde{\gamma}$ and $x_{0}$. Let $r_{0} = r_{0}(\beta, n)  \in (0, 1)$ be a constant such that
\begin{eqnarray}
\label{barrier function inequality 3}
\tilde{\Theta}_{r_{0}} \subset \tilde{\Theta}_{1/4C_{0}} \quad \text{and} \quad \int_{\tilde{\Theta}_{r_{0}}} \dfrac{ M \vert y \vert^{2}}{ \Vert y \Vert^{c+s}} dy \leq \dfrac{c_{4}c_{5}}{8 C_{\beta, \alpha}}.
\end{eqnarray}
Moreover, we can write
\begin{eqnarray}
\label{barrier function inequality 4}
I_{2} & = & \displaystyle \int_{\left(\mathbb{R}^{n} \setminus \mathcal{B}_{x_{0}}\right) \cap \tilde{\Theta}_{r_{0}}} \dfrac{1 - u_{\gamma}\left( x_{0} + y\right)}{\Vert y \Vert^{c_{} + 2\alpha}} dy + \displaystyle \int_{\left(\mathbb{R}^{n} \setminus \mathcal{B}_{x_{0}}\right) \cap \left(\mathbb{R}^{n} \setminus \tilde{\Theta}_{r_{0}} \right)} \dfrac{1 - u_{\gamma}\left( x_{0} + y\right)}{\Vert y \Vert^{c_{} + 2\alpha}} dy \\ \nonumber
& = & I_{3} + I_{4}.
\end{eqnarray}
By symmetry we see that
\begin{eqnarray}
I_{3} = -\dfrac{1}{2}\displaystyle \int_{\left(\mathbb{R}^{n} \setminus \mathcal{B}_{x_{0}}\right) \cap \tilde{\Theta}_{r_{0}}} \dfrac{u_{\gamma}\left( x_{0} + y\right) + u_{\gamma}\left( x_{0} - y\right) - 2}{\Vert y \Vert^{c_{} + 2\alpha}} dy
\end{eqnarray}
and use Taylor's formula to get $0 \leq \theta^{i}_{y,x_{0}} \leq 1$ such that
\begin{eqnarray}
u_{\gamma}(x_{0} + y) + u_{\gamma}(x_{0} - y) - 2 = \langle D^{2}u_{\gamma}(x_{0} + \theta_{y,x_{0}}y)y, y \rangle,
\end{eqnarray}
where $\theta_{y,x_{0}}e_{i}=\theta^{i}_{y,x_{0}}$. Hence, from \eqref{barrier function inequality 3}, we get
\begin{eqnarray}
\label{barrier function inequality 5}
\left | I_{3} \right | \leq \tilde{\gamma} \dfrac{c_{4}c_{5}}{8C_{\beta, \alpha}}.
\end{eqnarray}
In order to estimate $I_{4}$ let $R_{0}=R_{0}(\beta, n) > 1$ a constant such that
$$
\tilde{\Theta}_{1}(-x_{0}) \cup \tilde{\Theta}_{r_{0}}\subset \tilde{\Theta}_{R_{0}}
$$
and denote $A_{0} = \left(\mathbb{R}^{n} \setminus \mathcal{B}_{x_{0}}\right) \cap \left(\mathbb{R}^{n} \setminus \tilde{\Theta}_{r_{0}} \right)$. Thus, since $u_{\gamma}\leq 1$ in $\mathbb{R}^{n} \setminus \tilde{\Theta}_{1}$, we find
\begin{eqnarray}
\label{barrier function inequality 6}
 I_{4} & = & \int_{A_{0} \cap \tilde{\Theta}_{R_{0}}} \dfrac{1 - u_{\gamma}(x_{0} + y)}{\Vert y \Vert^{c+\alpha}} dy + \int_{A_{0} \cap (\mathbb{R}^{n} \setminus \tilde{\Theta}_{R_{0}}) } \dfrac{1 - u_{\gamma}(x_{0} + y)}{\Vert y \Vert^{c+\alpha}} dy \\ \nonumber
& \geq  & \int_{A_{0} \cap \tilde{\Theta}_{R_{0}}} \dfrac{1 - u_{\gamma}(x_{0} + y)}{\Vert y \Vert^{c+\alpha}} dy.
\end{eqnarray}
Again we use $u_{\gamma}\leq 1$ in $\mathbb{R}^{n} \setminus \tilde{\Theta}_{1}$ to estimate
\begin{eqnarray}
\int_{A_{0} \cap \tilde{\Theta}_{R_{0}}} \dfrac{1 - u_{\gamma}(x_{0} + y)}{\Vert y \Vert^{c+\alpha}} dy &\geq & \int_{\left(A_{0} \cap \tilde{\Theta}_{R_{0}} \right) \cap \tilde{\Theta}_{1}(-x_{0})} \dfrac{1 - u_{\gamma}(x_{0} + y)}{\Vert y \Vert^{c+\alpha}} dy \\ \nonumber
& \geq & r_{0}^{-(c+2\alpha)}\int_{\left(A_{0} \cap \tilde{\Theta}_{R_{0}} \right) \cap \tilde{\Theta}_{1}(-x_{0})} \dfrac{1 - u_{\gamma}(x_{0} + y)}{\Vert y \Vert^{c+\alpha}} dy \\ \nonumber
& \geq & r_{0}^{-(c+4/b_{\max})}\int_{ \tilde{\Theta}_{1}(-x_{0})} \left(1 - u_{\gamma}(x_{0} + y)\right)dy, \\ \nonumber
\end{eqnarray}
since $r^{c+4/b_{\max}}_{0} \leq r_{0}^{c+2\alpha}$ and $u_{\gamma}\geq 1$ in $\tilde{\Theta}_{1}$.
Furthermore, we have
\begin{eqnarray}
\int_{ \tilde{\Theta}_{1}(-x_{0})} \left(1 - u_{\gamma}(x_{0} + y)\right)dy & = & \int_{ \tilde{\Theta}_{1}} \left(1 - u_{\gamma}(y)\right)dy \\ \nonumber
& = & c_{7}\left( \dfrac{1}{c_{\tilde{\beta}}} - \dfrac{1}{c_{\tilde{\beta}} - \tilde{\gamma}}\right)\\
& \geq &  \tilde{\gamma}\left[ - \dfrac{c_{7}}{c_{\tilde{\beta}}(c_{\tilde{\beta}}-1/2)} \right],
\end{eqnarray}
for some constant $c_{7}=c_{7}\left( \beta, n \right) >0$, since $0 < \tilde{\gamma} < 1/2$. Hence, letting $\alpha \rightarrow 2/b_{\max}$ we conclude that
\begin{eqnarray}
\label{barrier function inequality 7}
C_{\beta, \alpha} I_{4} \geq - \tilde{\gamma}\dfrac{c_{4}c_{5}}{8}.
\end{eqnarray}
We then combine \eqref{barrier function inequality 1}, \eqref{barrier function inequality 2}, \eqref{barrier function inequality 4}, \eqref{barrier function inequality 5}, and \eqref{barrier function inequality 7} to reach
$$
\Delta^{\beta, \alpha} u_{\gamma}\left( x_{0} \right) \geq \tilde{\gamma} \dfrac{c_{4}c_{5}}{4},
$$
if $\alpha > \alpha_{0}$ for some constant $\alpha_{0}= \alpha_{0}\left( \beta, n \right) > 0$. Finally, by Lemma \ref{scaling lemma on the anisotropic sphere} we get $\gamma \in \mathcal{I}_{\beta \alpha}$.
\end{proof}

From now on we assume $0 < \alpha_{0} < 2/b_{\max}$ is as in previous lemma. In next lemma we prove that $\mathcal{I}_{\beta\alpha}$ is a non-empty and bounded set.
\begin{lemma}
\label{fund. sol. lemma 1}
If $ \alpha_{0} < \alpha < 2/b_{\max}$, the constant $\gamma_{}^{\ast} = \gamma_{\beta, \alpha}^{\ast}$ satisfies $0< \gamma_{}^{\ast} < c$.
\end{lemma}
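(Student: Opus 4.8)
\textbf{Lower bound.} The estimate $\gamma^{\ast}>0$ is immediate from Lemma~\ref{Bar. func.}: that lemma produces a number $\gamma\in(0,1/2)$ depending only on $\beta$ and $n$, together with $u_{\gamma}=\Vert\cdot\Vert_{\tilde{\beta}}^{-\tilde{\gamma}}\in\mathcal{A}^{+}_{\gamma}$ satisfying $\Delta^{\beta,\alpha}u_{\gamma}\geq 0$ in $\mathbb{R}^{n}\setminus\{0\}$ whenever $\alpha_{0}<\alpha<2/b_{\max}$; hence $\gamma\in\mathcal{I}_{\beta\alpha}$ and $\gamma^{\ast}=\sup\mathcal{I}_{\beta\alpha}\geq\gamma>0$.

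\textbf{Upper bound: strategy and the contact point.} The plan is to produce a constant $\varepsilon_{0}=\varepsilon_{0}(\beta,n,\alpha)>0$ with $\gamma\leq c-\varepsilon_{0}$ for every $\gamma\in\mathcal{I}_{\beta\alpha}$, which gives $\gamma^{\ast}\leq c-\varepsilon_{0}<c$. So fix $\gamma\in\mathcal{I}_{\beta\alpha}$ and $u\in\mathcal{A}^{+}_{\gamma}$ with $\Delta^{\beta,\alpha}u\geq 0$ in $\mathbb{R}^{n}\setminus\{0\}$. As $u>0$ is continuous on the compact set $\partial\tilde{\Theta}_{1}$, the number $m:=\min_{\partial\tilde{\Theta}_{1}}u$ is strictly positive; choose $x_{0}\in\partial\tilde{\Theta}_{1}$ with $u(x_{0})=m$ and set $\psi(x):=m\,\Vert x\Vert_{\tilde{\beta}}^{-\tilde{\gamma}}$. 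By the choice of $\tilde{\beta}=\mu_{\beta}\beta$, $\psi\in C^{2}(\mathbb{R}^{n}\setminus\{0\})$; moreover $\psi$ is $-\gamma$-homogeneous with respect to $T_{\beta,\cdot}$ and equals $m\leq u$ on $\partial\tilde{\Theta}_{1}$, so $\psi\leq u$ in $\mathbb{R}^{n}\setminus\{0\}$, with equality at $x_{0}$. Two consequences: (i) by the $-\gamma$-homogeneity and assertion~3 of Lemma~\ref{elemntary results}, $u(y)\geq\psi(y)\geq\lambda\,m\,\Vert y\Vert_{\beta}^{-\gamma}$ for all $y\neq 0$, with $\lambda=\lambda(\beta,n)>0$; (ii) after the routine perturbation $\psi-\eta\vert x-x_{0}\vert^{2}$ (with $\eta\to 0$ at the end) that makes the contact strict, $\psi$ touches $u$ from below at $x_{0}$, so the viscosity inequality for $u$ at $x_{0}$, applied with the replacement $v$ defined by $v=\psi$ on a small ball $B_{r}(x_{0})\subset\mathbb{R}^{n}\setminus\{0\}$ and $v=u$ on $\mathbb{R}^{n}\setminus B_{r}(x_{0})$, yields $\Delta^{\beta,\alpha}v(x_{0})\geq 0$. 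In particular $v\in L^{1}(\mathbb{R}^{n},w)$, which, since $w\asymp 1$ near the origin, already forces $\gamma<c$ by (i); it remains to upgrade this to $\gamma\leq c-\varepsilon_{0}$.

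\textbf{Upper bound: the key estimate.} Assume $\gamma<c$ and estimate $C_{\beta,\alpha}^{-1}\Delta^{\beta,\alpha}v(x_{0})$ from above, splitting $\mathbb{R}^{n}$ into $B_{r}(x_{0})$, a fixed anisotropic ball $\Theta_{\delta}$ around the origin (disjoint from $B_{r}(x_{0})$), and the remainder. On $B_{r}(x_{0})$, where $v=\psi\in C^{2}$, Taylor's formula with the symmetry of $B_{r}(x_{0})$ and $\int_{B_{r}(x_{0})}\vert y-x_{0}\vert^{2}\Vert x_{0}-y\Vert_{\beta}^{-c-2\alpha}\,dy<\infty$ (Lemma~\ref{scaling properties lemma}) bounds the contribution by $C_{1}m$ with $C_{1}$ independent of $\gamma$ (note $\tilde{\gamma}=\gamma/\mu_{\beta}\leq c/\mu_{\beta}$, so $\Vert D^{2}\psi\Vert_{L^{\infty}(B_{r}(x_{0}))}\lesssim m$). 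On $\mathbb{R}^{n}\setminus(B_{r}(x_{0})\cup\Theta_{\delta})$, from $v(x_{0})-u(y)\leq m$ and the finiteness of $\int_{\mathbb{R}^{n}\setminus(B_{r}(x_{0})\cup\Theta_{\delta})}\Vert x_{0}-y\Vert_{\beta}^{-c-2\alpha}\,dy$ the contribution is $\leq C_{2}m$. On $\Theta_{\delta}$, writing the integrand as $m\Vert x_{0}-y\Vert_{\beta}^{-c-2\alpha}-u(y)\Vert x_{0}-y\Vert_{\beta}^{-c-2\alpha}$, bounding $\Vert x_{0}-y\Vert_{\beta}$ above and below there (quasi-triangle inequality, constant $C_{0}$), using $u(y)\geq\lambda m\Vert y\Vert_{\beta}^{-\gamma}$, and passing to anisotropic polar coordinates, the contribution is at most $m\bigl(\kappa_{1}\delta^{c}-\kappa\lambda\,\vert\partial\Theta_{1}\vert\,\delta^{c-\gamma}/(c-\gamma)\bigr)$, with $\kappa_{1},\kappa>0$ depending only on $\beta,n,\alpha$. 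Adding the three pieces,
\begin{eqnarray*}
C_{\beta,\alpha}^{-1}\,\Delta^{\beta,\alpha}v(x_{0})\ \leq\ m\,\Bigl(\,C_{3}-\kappa\,\lambda\,\vert\partial\Theta_{1}\vert\,\dfrac{\delta^{c-\gamma}}{c-\gamma}\,\Bigr),\qquad C_{3}=C_{3}(\beta,n,\alpha).
\end{eqnarray*}
Since $m>0$ and $\delta$ is fixed, the right-hand side becomes negative once $c-\gamma<\varepsilon_{0}$ for a suitable $\varepsilon_{0}=\varepsilon_{0}(\beta,n,\alpha)>0$, contradicting $\Delta^{\beta,\alpha}v(x_{0})\geq 0$. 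Hence $\gamma\leq c-\varepsilon_{0}$, and taking the supremum over $\mathcal{I}_{\beta\alpha}$ gives $0<\gamma^{\ast}<c$.

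\textbf{Main obstacle.} The delicate point is the \emph{strict} inequality $\gamma^{\ast}<c$: the requirement $v\in L^{1}(\mathbb{R}^{n},w)$ only yields $\gamma<c$ for each individual $\gamma\in\mathcal{I}_{\beta\alpha}$, which does not by itself exclude $\gamma^{\ast}=c$. The upgrade rests on the quantitative blow-up $\delta^{c-\gamma}/(c-\gamma)\to+\infty$ of the origin-region contribution as $\gamma\uparrow c$, together with the observation that this term and all error terms carry the common positive factor $m=\min_{\partial\tilde{\Theta}_{1}}u$, so that $m$ cancels and, for $\gamma$ close to $c$, the sign of $\Delta^{\beta,\alpha}v(x_{0})$ is dictated by a universal quantity alone.
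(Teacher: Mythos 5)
Your argument is correct and matches the paper's: both establish $\gamma^{\ast}>0$ by citing Lemma \ref{Bar. func.}, and both obtain $\gamma^{\ast}<c$ by taking a minimum point $x_{0}\in\partial\tilde{\Theta}_{1}$ of $u\in\mathcal{A}^{+}_{\gamma}$, comparing $u$ from below with a multiple of $\Vert\cdot\Vert_{\tilde{\beta}}^{-\tilde{\gamma}}$, and splitting the integral into a region near $x_{0}$ (Taylor), a region near the origin (where the $\delta^{c-\gamma}/(c-\gamma)$ blow-up appears), and the far field, deducing $\Delta^{\beta,\alpha}u(x_{0})<0$ for $\gamma$ close to $c$. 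The only cosmetic differences are that the paper normalizes $\inf_{\partial\tilde{\Theta}_{1}}u=1$ (so your factor $m$ does not appear) and evaluates $\Delta^{\beta,\alpha}u(x_{0})$ directly via the pointwise comparison $1-u(x_{0}+y)\leq 1-v(x_{0}+y)$ rather than through the viscosity test-function machinery.
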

\begin{proof}
By previous lemma we have $\gamma^{\ast} > 0$. To get $\gamma^{\ast} < c$ we argue as in \cite{CWW}. We will show that there is $0 < \varepsilon_{0} < 1$ depending on $\beta$, $\alpha$, and $n$ such that if $\gamma > c - \varepsilon_{0}$ and $u \in \mathcal{A}^{+}_{\gamma}$ then there is $x_{0} \in \partial \tilde{\Theta}_{1}$ with
\begin{eqnarray}
\label{contradiction ineq fund. sol. lemma 1 1}
\Delta^{\beta, \alpha}u(x_{0}) < 0,
\end{eqnarray}
which contradicts $u \in \mathcal{A}^{+}_{\gamma}$. In fact, given $u \in \mathcal{A}^{+}_{\gamma}$ we can suppose
\begin{eqnarray}
u(x_{0}) = \displaystyle\inf_{x \in \partial \tilde{\Theta}_{1}}u = 1.
\end{eqnarray}
By $-\gamma$-homogeneity of $u$ and Lemma \ref{scaling properties lemma} we have
\begin{eqnarray}
\Vert x \Vert^{\tilde{\gamma} }_{\tilde{\beta}} u(x) \geq u(x_{0}) = 1,
\end{eqnarray}
that is, $v \leq u$ in $\mathbb{R}^{n}\setminus \left\lbrace 0 \right\rbrace.$
Then, we estimate
\begin{eqnarray}
\label{contradiction ineq fund. sol. lemma 1 2}
\Delta^{\beta, \alpha}u(x_{0}) & = & C_{\beta, \alpha}\int_{\mathbb{R}^{n}} \dfrac{1 - u(x_{0} + y)}{\Vert y \Vert^{c + 2\alpha}} dy \\ \nonumber
& \leq & C_{\beta, \alpha} \int_{\mathbb{R}^{n}} \dfrac{1 - v(x_{0} + y)}{\Vert y \Vert^{c+2\alpha}} dy.
\end{eqnarray}
As in \cite{CWW}, we write
\begin{eqnarray}
\label{contradiction ineq fund. sol. lemma 1 3}
\int_{\mathbb{R}^{n}} \dfrac{1 - v(x_{0} + y)}{\Vert y \Vert^{n + 2\alpha}} dy = I_{1} + I_{2},
\end{eqnarray}
where integrals $I_{i}$ are given by
\begin{eqnarray*}
I_{1} = \int_{A_{1}} \dfrac{1 - v(x_{0} + y)}{\Vert y \Vert^{c+2\alpha}} dy, \ \ \text{and} \ \ I_{2} = \int_{A_{2}} \dfrac{1 - v(x_{0} + y)}{\Vert y \Vert^{c+2\alpha}} dy
\end{eqnarray*}
with
$$
A_{1} = \tilde{\Theta}_{\frac{1}{4C_{0}}} \cup \tilde{\Theta}_{\frac{1}{2C_{0}}}(-x_{0}) \quad \text{and}\quad A_{2} = \mathbb{R}^{n} \setminus A_{1}.
$$
We use Taylor's formula to estimate
\begin{eqnarray}
\label{contradiction ineq fund. sol. lemma 1 4}
\int_{\tilde{\Theta}_{\frac{1}{4C_{0}}}} \dfrac{1 - v(x_{0} + y)}{\Vert y \Vert^{c+2\alpha}} dy & = & -\dfrac{1}{2}\int_{\tilde{\Theta}_{\frac{1}{4C_{0}}}} \dfrac{v(x_{0} + y)+ v(x_{0} - y ) - 2}{\Vert y \Vert^{c+2\alpha}} dy \\ \nonumber
& \leq &  \dfrac{1}{2} \int_{\tilde{\Theta}_{\frac{1}{4C_{0}}}} \dfrac{\vert v(x_{0} + y)+ v(x_{0} - y ) - 2 \vert}{\Vert y \Vert^{c+2\alpha}} dy \\ \nonumber
& \leq & C_{1},
\end{eqnarray}
where $C_{1} = C_{1}(\beta, \alpha, n) >0$ is a constant. Furthermore, if $y \in \tilde{\Theta}_{\frac{1}{2C_{0}}}(-x_{0})$ we get $\Vert y \Vert_{} \geq C^{-1}_{2}$, for some $C_{2}>1$ depending $\beta$ and $n$. Moreover, $v > 1$ in $\tilde{\Theta}_{\frac{1}{2C_{0}}}$. Hence, we estimate
\begin{eqnarray}
\label{contradiction ineq fund. sol. lemma 1 5}
\int_{\tilde{\Theta}_{\frac{1}{2C_{0}}}(-x_{0})} \dfrac{1 - v(x_{0} + y)}{\Vert y \Vert^{c+2\alpha}} dy & = &
\int_{\tilde{\Theta}_{\frac{1}{2C_{0}}}(-x_{0})} \dfrac{1 - \Vert x_{0} + y \Vert_{\tilde{\beta}}^{-\tilde{\gamma}}}{\Vert y \Vert^{c+2\alpha}} dy \\ \nonumber
& \leq & C_{2}^{-c-2\alpha}\int_{\tilde{\Theta}_{\frac{1}{2C_{0}}}(-x_{0})} \left(1 - \Vert x_{0} + y \Vert_{\tilde{\beta}}^{-\tilde{\gamma}} \right) dy \\ \nonumber
& = & C_{3} \left[\dfrac{(1/2C_{0})^{\frac{c}{\mu_{\beta}}}}{\frac{c}{\mu_{\beta}}} - \dfrac{(1/2C_{0})^{\frac{(c - \gamma)}{\mu_{\beta}}}}{\frac{(c - \gamma)}{\mu_{\beta}}} \right].
\end{eqnarray}
Finally, we get
\begin{eqnarray}
\label{contradiction ineq fund. sol. lemma 1 6}
\int_{A_{2}} \dfrac{1 - v(x_{0} + y)}{\Vert y \Vert^{c+2\alpha}} dy &\leq & C_{4}\int_{(\mathbb{R}^{n} \setminus \tilde{\Theta}_{\frac{1}{4C_{0}}})} \dfrac{1}{\Vert y \Vert^{c+2\alpha}} dy \\ \nonumber
& = & C_{5}.
\end{eqnarray}
Combining \eqref{contradiction ineq fund. sol. lemma 1 2} - \eqref{contradiction ineq fund. sol. lemma 1 6} we obtain \eqref{contradiction ineq fund. sol. lemma 1 1} and the lemma is proved.
\end{proof}

According to the notation used in \eqref{tilde notation}, given $\alpha >0$ we will denote by
$$
\tilde{\alpha} = \dfrac{\alpha}{\mu_{\beta}}.
$$
The next result asserts that $\mathcal{I}_{\beta\alpha}$ is a open subset of $\left(0, c \right]$. Through Lemmas \ref{scaling properties lemma} and \ref{scaling lemma on the anisotropic sphere}, its proof follows as in \cite{CWW}. This fact drives us to the heart of the existence of fundamental solutions, namely, the existence of solution to the following anisotropic non-local Poisson equation
\begin{eqnarray}
\label{main supersolution previous}
\Delta^{\beta, \alpha} u (x) = \dfrac{1}{\Vert x \Vert_{\tilde{\beta}}^{\tilde{\gamma} + 2\tilde{\alpha}}}, \quad \text{for all} \ x \in \mathbb{R}^{n} \setminus \left\lbrace 0 \right\rbrace.
\end{eqnarray}
\begin{lemma}
\label{barrier lemma lemma 1}
Assume $ \alpha_{0} < \alpha < 2/b_{\max}$ and $0 < \gamma < \gamma^{\ast}$. Then there exists $u \in \mathcal{A}_{\gamma}^{+}$ such that
\begin{eqnarray}
\Delta^{\beta, \alpha} u (x) \geq \dfrac{1}{\Vert x \Vert_{\tilde{\beta}}^{\left( \tilde{\gamma} + 2\tilde{\alpha} \right)}}, \quad \text{for all} \ x \in \mathbb{R}^{n} \setminus \left\lbrace 0 \right\rbrace.
\end{eqnarray}
\end{lemma}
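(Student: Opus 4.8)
The plan is to reduce, via Lemma~\ref{scaling lemma on the anisotropic sphere}, to exhibiting some $u\in\mathcal{A}_{\gamma}^{+}$ with $\Delta^{\beta,\alpha}u\geq 1$ on $\partial\tilde{\Theta}_{1}$, and then to produce such a $u$ as a weighted geometric mean of two radial homogeneous profiles of different degrees. First I fix a small $\gamma_{0}\in(0,\gamma)$ for which the barrier $u_{\gamma_{0}}=\Vert\cdot\Vert_{\tilde{\beta}}^{-\gamma_{0}/\mu_{\beta}}$ of Lemma~\ref{Bar. func.} still satisfies $\Delta^{\beta,\alpha}u_{\gamma_{0}}\geq c_{\flat}>0$ on $\partial\tilde{\Theta}_{1}$; this is legitimate because every estimate in the proof of Lemma~\ref{Bar. func.} is monotone in the homogeneity exponent, so its conclusion holds for all sufficiently small exponents, and $u_{\gamma_{0}}\in C^{2}(\mathbb{R}^{n}\setminus\{0\})$ by the choice of $\mu_{\beta}$. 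Since $\gamma<\gamma^{\ast}=\sup\mathcal{I}_{\beta\alpha}$, I also pick $\gamma_{1}\in(\gamma,\gamma^{\ast})\cap\mathcal{I}_{\beta\alpha}$ and a corresponding $h\in\mathcal{A}_{\gamma_{1}}^{+}$ with $\Delta^{\beta,\alpha}h\geq 0$ in $\mathbb{R}^{n}\setminus\{0\}$; by Lemma~\ref{fund. sol. lemma 1}, $\gamma_{1}<\gamma^{\ast}<c$, so the $-\gamma_{1}$-homogeneous function $h$ belongs to $L^{1}(\mathbb{R}^{n},w)$ with $w(x)=(1+\Vert x\Vert^{c+2\alpha})^{-1}$. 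Finally, with $a=\frac{\gamma_{1}-\gamma}{\gamma_{1}-\gamma_{0}}$ and $b=\frac{\gamma-\gamma_{0}}{\gamma_{1}-\gamma_{0}}$ — so that $a,b\in(0,1)$, $a+b=1$ and $a\gamma_{0}+b\gamma_{1}=\gamma$ — I set $u:=u_{\gamma_{0}}^{a}h^{b}$.

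It is immediate that $u>0$ and continuous on $\mathbb{R}^{n}\setminus\{0\}$, and that $u(T_{\beta,r}x)=r^{-(a\gamma_{0}+b\gamma_{1})}u(x)=r^{-\gamma}u(x)$, hence $u\in\mathcal{A}_{\gamma}^{+}$. The core of the argument is the superposition inequality
$$
\Delta^{\beta,\alpha}u(x_{0})\;\geq\;a\,u_{\gamma_{0}}(x_{0})^{a-1}h(x_{0})^{b}\,\Delta^{\beta,\alpha}u_{\gamma_{0}}(x_{0})\;+\;b\,u_{\gamma_{0}}(x_{0})^{a}h(x_{0})^{b-1}\,\Delta^{\beta,\alpha}h(x_{0}),
$$
holding at each $x_{0}\in\partial\tilde{\Theta}_{1}$ in the viscosity sense, which I would derive from the concavity of $(s,t)\mapsto s^{a}t^{b}$ on the positive quadrant (valid since $a,b\geq 0$ and $a+b\leq 1$). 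Concretely: if a $C^{2}$ function $\varphi$ touches $u$ from below at $x_{0}$, then $\psi:=(\varphi/u_{\gamma_{0}}^{a})^{1/b}$ is $C^{2}$ near $x_{0}$ (because $u_{\gamma_{0}}$ is $C^{2}$ and strictly positive and $\varphi(x_{0})=u(x_{0})>0$) and touches $h$ from below there; gluing $\psi$ to $h$ outside a small ball yields an admissible competitor $\tilde{\psi}$ for $h$, while the competitor for $u$ built from $\varphi$ is exactly $v:=u_{\gamma_{0}}^{a}\tilde{\psi}^{b}$, which is $C^{2}$ near $x_{0}$. Applying the supporting-hyperplane inequality for $s^{a}t^{b}$ at $(u_{\gamma_{0}}(x_{0}),h(x_{0}))$ to the second differences $v(x_{0}+z)+v(x_{0}-z)-2v(x_{0})$ and integrating against $\Vert z\Vert^{-c-2\alpha}\,dz$ — all three resulting integrals converging by Lemma~\ref{scaling properties lemma} (for $u_{\gamma_{0}}$) and by $h\in L^{1}(\mathbb{R}^{n},w)$ — gives the displayed inequality with $v,\tilde{\psi}$ in place of $u,h$; and since $h$ is a viscosity supersolution, $\Delta^{\beta,\alpha}\tilde{\psi}(x_{0})\geq 0$, whence $\Delta^{\beta,\alpha}v(x_{0})\geq a\,u_{\gamma_{0}}(x_{0})^{a-1}h(x_{0})^{b}\,\Delta^{\beta,\alpha}u_{\gamma_{0}}(x_{0})\geq a\,u_{\gamma_{0}}(x_{0})^{a-1}h(x_{0})^{b}\,c_{\flat}$.

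By compactness of $\partial\tilde{\Theta}_{1}$, on which $u_{\gamma_{0}}$ and $h$ are continuous and bounded away from zero, the last bound produces a constant $c_{\ast}:=c_{\flat}\,\min_{\partial\tilde{\Theta}_{1}}\,a\,u_{\gamma_{0}}^{a-1}h^{b}>0$ with $\Delta^{\beta,\alpha}u\geq c_{\ast}$ on $\partial\tilde{\Theta}_{1}$ in the viscosity sense. Then $u/c_{\ast}\in\mathcal{A}_{\gamma}^{+}$ satisfies $\Delta^{\beta,\alpha}(u/c_{\ast})\geq 1$ on $\partial\tilde{\Theta}_{1}$, so Lemma~\ref{scaling lemma on the anisotropic sphere} upgrades this to $\Delta^{\beta,\alpha}(u/c_{\ast})(x)\geq\Vert x\Vert_{\tilde{\beta}}^{-(\tilde{\gamma}+2\tilde{\alpha})}$ for every $x\in\mathbb{R}^{n}\setminus\{0\}$, which is precisely the assertion (with $u/c_{\ast}$ as the required function). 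In the easy sub-case where $\gamma$ itself is small enough for Lemma~\ref{Bar. func.} to apply one may simply take $u=u_{\gamma}$ and skip the superposition.

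I expect the only genuinely delicate point to be the rigorous execution of the superposition step within the viscosity framework: one must check that the nonlinear substitution $\varphi\mapsto\psi=(\varphi/u_{\gamma_{0}}^{a})^{1/b}$ is compatible with the cut-and-paste defining viscosity competitors — so that the competitor for $u$ is literally $u_{\gamma_{0}}^{a}\tilde{\psi}^{b}$ — and that the pointwise concavity inequality may be integrated term by term, which is where the singular and decaying exponents of $u_{\gamma_{0}}$ and $h$ must be controlled via Lemma~\ref{scaling properties lemma} and the bound $\gamma^{\ast}<c$ of Lemma~\ref{fund. sol. lemma 1}. Everything else — the homogeneity of $u$, the barrier estimate for small exponents, and the concluding scaling — is routine given the results already at hand.
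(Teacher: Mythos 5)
Your argument is correct, and it is a genuinely different route from the one the paper gestures at. The paper simply defers to \cite{CWW}, where the standard device is to take a single supersolution $h\in\mathcal{A}^{+}_{\gamma_{1}}$ with $\gamma<\gamma_{1}\in\mathcal{I}_{\beta\alpha}$ and set $u=h^{\gamma/\gamma_{1}}$; since $t\mapsto t^{\gamma/\gamma_{1}}$ is \emph{strictly} concave, the second-order term in the Taylor remainder (rather than the supporting-hyperplane inequality alone) supplies the strict positive gain needed to reach $\Delta^{\beta,\alpha}u\geq c_{\ast}$ on $\partial\tilde\Theta_{1}$, after which one scales exactly as you do. Your variant replaces that quadratic-remainder analysis by a \emph{geometric mean} $u=u_{\gamma_{0}}^{a}h^{b}$ of two profiles, so that the strict lower bound is imported wholesale from Lemma~\ref{Bar. func.} via $\Delta^{\beta,\alpha}u_{\gamma_{0}}\geq c_{\flat}$, and the second profile $h$ only needs to satisfy $\Delta^{\beta,\alpha}h\geq 0$. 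This buys a cleaner source of strict positivity (and avoids having to quantify the non-constancy of $h$ along the cone of integration), at the modest cost of running the viscosity cut-and-paste simultaneously for two supersolutions and of re-reading the proof of Lemma~\ref{Bar. func.} to confirm its conclusion is monotone in the homogeneity exponent (it is: the key sign condition \eqref{c1 small and c2 almost 1 2} only improves as $\tilde\gamma$ decreases, and the barrier constant scales as $\tilde\gamma$, which is harmless after normalization). The two ingredients you use outside the superposition step --- $\gamma^{\ast}<c$ from Lemma~\ref{fund. sol. lemma 1} to guarantee $h\in L^{1}(\mathbb{R}^{n},w)$, and the scaling reduction of Lemma~\ref{scaling lemma on the anisotropic sphere} --- are exactly the lemmas the paper says to combine with the CWW blueprint, so the supporting cast coincides even if the lead argument differs.

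One small precision you should add if this is to be written out fully: in the viscosity step you must fix a single radius $r$ for the cut-and-paste of $\varphi$ and of $\psi$ (so that $v=u_{\gamma_{0}}^{a}\tilde\psi^{b}$ holds identically, not just near $x_{0}$), and you must take $r$ small enough that $\varphi>0$ on $\overline{B_{r}(x_{0})}$ so that $\psi=(\varphi/u_{\gamma_{0}}^{a})^{1/b}$ is a legitimate $C^{2}$ competitor for $h$. This is a two-line check but it is load-bearing, as you yourself flag.
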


We are now able to prove the existence of solution of \eqref{main supersolution previous}. We follow closely the proof presented in \cite{CWW}.
\begin{proposition}
\label{main supersolution prop}
Suppose $ \alpha_{0} < \alpha < 2/b_{\max}$. If $0 < \gamma < \gamma_{}^{\ast}$, then there exists $u \in \mathcal{A}_{\gamma}^{+}$ such that
\begin{eqnarray}
\label{main supersolution}
\Delta^{\beta, \alpha} u (x) = \dfrac{1}{\Vert x \Vert_{\tilde{\beta}}^{\tilde{\gamma} + 2\tilde{\alpha}}}, \quad \text{for all} \ x \in \mathbb{R}^{n} \setminus \left\lbrace 0 \right\rbrace.
\end{eqnarray}
\end{proposition}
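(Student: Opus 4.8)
The plan is to reproduce, in the anisotropic setting, the approximation scheme of \cite{CWW}. Throughout we may fix an auxiliary exponent $q\geq 2$ with $\alpha q>c$, so that the embedding $H^{\alpha,q}_{\beta}(\mathbb{R}^{n})\hookrightarrow C^{\theta}_{\Vert\cdot\Vert_{\beta}}(\mathbb{R}^{n})$ furnished by Theorem \ref{main theorem} together with \eqref{equality Bessel Lizorkin and Bessel Leitao} is at our disposal. Since $0<\gamma<\gamma^{\ast}$, Lemma \ref{barrier lemma lemma 1} produces a function $\bar{u}\in\mathcal{A}^{+}_{\gamma}$ with
\begin{equation*}
\Delta^{\beta,\alpha}\bar{u}(x)\ \geq\ f(x):=\dfrac{1}{\Vert x\Vert_{\tilde{\beta}}^{\tilde{\gamma}+2\tilde{\alpha}}},\qquad x\in\mathbb{R}^{n}\setminus\{0\},
\end{equation*}
which will play the role of an upper barrier; note that $\bar u$ blows up at the origin, vanishes at infinity, and $\bar u\in L^{1}(\mathbb{R}^{n},w)$. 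First I would truncate $f$ along the intrinsic scaling: choose continuous, bounded, compactly supported $g_{k}=\psi_{k}(\Vert\cdot\Vert_{\tilde\beta})$ with $0\leq g_{k}\leq g_{k+1}\leq f$ and $g_{k}\uparrow f$ locally uniformly on $\mathbb{R}^{n}\setminus\{0\}$ (for instance $g_{k}=\min\{f,k\}\,\eta_{k}$, with $\eta_{k}$ a radial cut-off equal to $1$ on $\tilde\Theta_{k}$ and nondecreasing in $k$). By Theorem \ref{Lq estimates} applied with $\kappa=2^{-k}>0$ there is a unique strong solution $u_{k}\in H^{\alpha,q}_{\beta}(\mathbb{R}^{n})$ of
\begin{equation*}
\Delta^{\beta,\alpha}u_{k}+2^{-k}u_{k}=g_{k}\quad\text{in }\mathbb{R}^{n},
\end{equation*}
and, by the embedding above together with Corollary \ref{u goes to zero as x tends to infty lemma}, each $u_{k}$ is continuous on $\mathbb{R}^{n}$ and satisfies $u_{k}(x)\to 0$ as $\Vert x\Vert\to\infty$.

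The next step is to record three properties of the $u_{k}$, in this order. \emph{Nonnegativity:} since $\Delta^{\beta,\alpha}u_{k}+2^{-k}u_{k}=g_{k}\geq 0$, Corollary \ref{max princ 1} gives $u_{k}\geq 0$. \emph{Monotonicity:} the difference $d_{k}:=u_{k+1}-u_{k}$ satisfies $\Delta^{\beta,\alpha}d_{k}+2^{-(k+1)}d_{k}=(g_{k+1}-g_{k})+2^{-(k+1)}u_{k}\geq 0$, so Corollary \ref{max princ 1} again yields $d_{k}\geq 0$. \emph{Barrier bound:} because $\Delta^{\beta,\alpha}(u_{k}-\bar{u})=g_{k}-2^{-k}u_{k}-\Delta^{\beta,\alpha}\bar{u}\leq g_{k}-f\leq 0$ on $\mathbb{R}^{n}\setminus\{0\}$, while $\limsup_{\Vert x\Vert\to 0}(u_{k}-\bar{u})=-\infty$ (the continuous $u_{k}$ is bounded and $\bar u$ is singular at $0$) and $u_{k}-\bar u\to 0$ at infinity, Lemma \ref{max principle} gives $u_{k}\leq\bar{u}$. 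Hence $u_{k}\uparrow u$ pointwise with $0\leq u\leq\bar{u}$. On compact subsets of $\mathbb{R}^{n}\setminus\{0\}$ the sequence $(u_{k})$ is uniformly bounded by $\bar u$ and, by the interior Schauder estimate of Lemma \ref{C-regularity in general} and Remark \ref{C regilarity original norm remark}, equicontinuous; so by Dini's theorem $u_{k}\to u$ locally uniformly on $\mathbb{R}^{n}\setminus\{0\}$, and the convergence holds a.e.\ on $\mathbb{R}^{n}$ with the uniform domination $0\leq u_{k}\leq\bar u\in L^{1}(\mathbb{R}^{n},w)$. The stability property (Lemma \ref{stability prop}) then gives $\Delta^{\beta,\alpha}u=f$ in $\mathbb{R}^{n}\setminus\{0\}$; thus $u$ is a nonnegative viscosity solution of \eqref{main supersolution}, continuous on $\mathbb{R}^{n}\setminus\{0\}$ and vanishing at infinity.

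It remains to upgrade $u$ to a positive, $-\gamma$-homogeneous solution. The key point is that $u$ is the \emph{minimal} nonnegative continuous solution of $\Delta^{\beta,\alpha}v=f$ in $\mathbb{R}^{n}\setminus\{0\}$ that vanishes at infinity: given such a $v$ and any $\varepsilon>0$, the function $v+\varepsilon\bar{u}$ satisfies $\Delta^{\beta,\alpha}(v+\varepsilon\bar{u})\geq(1+\varepsilon)f\geq g_{k}$ and blows up at $0$, so $u_{k}-(v+\varepsilon\bar{u})$ has nonpositive $\Delta^{\beta,\alpha}$ on $\mathbb{R}^{n}\setminus\{0\}$, tends to $0$ at infinity, and has $\limsup=-\infty$ at $0$; Lemma \ref{max principle} gives $u_{k}\leq v+\varepsilon\bar u$, and letting $k\to\infty$ and then $\varepsilon\to 0$ yields $u\leq v$. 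On the other hand, using $\det T_{\beta,r}=r^{c}$ and $\Vert T_{\beta,r}z\Vert_{\beta}=r\Vert z\Vert_{\beta}$ (see \eqref{relation between Bessel Lizorkin and Bessel Leitao}) one checks the scaling identity $\Delta^{\beta,\alpha}\big(r^{\gamma}u(T_{\beta,r}\cdot)\big)(x)=r^{\gamma+2\alpha}(\Delta^{\beta,\alpha}u)(T_{\beta,r}x)$, and since $f$ is $-(\gamma+2\alpha)$-homogeneous with respect to $T_{\beta,r}$ (Lemma \ref{scaling properties lemma}), the dilate $u_{r}:=r^{\gamma}u(T_{\beta,r}\cdot)$ is again a nonnegative continuous solution of $\Delta^{\beta,\alpha}u_{r}=f$ vanishing at infinity, for every $r>0$. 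Minimality forces $u\leq u_{r}$ for all $r>0$; applying this with $r^{-1}$ in place of $r$ and composing with $T_{\beta,r}$ gives the reverse inequality, so $u=u_{r}$, i.e.\ $u\in\mathcal{A}_{\gamma}$. Positivity follows from a strong maximum principle argument: if $u(x_{1})=0$ for some $x_{1}\neq 0$, then $x_{1}$ is a global minimum of $u\geq 0$, and testing the viscosity supersolution inequality there against a quadratic perturbation of the constant $0$ forces $\Delta^{\beta,\alpha}$ of the corresponding replacement to be $\leq 0<f(x_{1})$, contradicting $\Delta^{\beta,\alpha}u\geq f$ (cf.\ Lemma \ref{strong max principle}); hence $u>0$ and $u\in\mathcal{A}^{+}_{\gamma}$.

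I expect the two delicate points to be: (a) the passage to the limit, where the approximants $u_{k}$ are only locally bounded away from the origin, so one must lean on the uniform domination $u_{k}\leq\bar u\in L^{1}(\mathbb{R}^{n},w)$ to justify the stability step; and (b) the minimality/homogeneity argument, which has to be run on the punctured unbounded domain $\mathbb{R}^{n}\setminus\{0\}$ against the singular source $f$ — the comparison function $v+\varepsilon\bar u$ is precisely the device needed to tame the competition between the source and the behavior of the solutions at $0$. Everything else is a faithful transcription of the isotropic argument in \cite{CWW}, with $\Vert\cdot\Vert_{\tilde\beta}$ and $T_{\beta,r}$ in place of the Euclidean norm and Euclidean dilations.
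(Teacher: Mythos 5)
Your plan reproduces the skeleton of the paper's proof — truncate the right-hand side, solve the penalized Schr\"odinger equation via Theorem \ref{Lq estimates}, use Corollary \ref{max princ 1} for nonnegativity, sandwich the approximants under the barrier from Lemma \ref{barrier lemma lemma 1} via Lemma \ref{max principle}, pass to the limit with Schauder, and identify the limit with Lemma \ref{stability prop}. Where you genuinely diverge is the proof that $u\in\mathcal{A}_{\gamma}$: the paper keeps the specific dyadic truncation $g_{k}$ (constant $2^{(\tilde\gamma+2\tilde\alpha)k}$ below scale $2^{-k}$, the true $f$ in between, $0$ above scale $2^{k}$) precisely so that $r^{\gamma+2\alpha}g_{k}(r^{\mu_{\beta}}\,\cdot)\leq g_{k+m}(\cdot)$ for a suitable shift $m$, and then compares the dilate $r^{\gamma}u_{k}(T_{\beta,r}\cdot)$ with $u_{k+m}$ before passing to the limit; you instead prove that your limit is the minimal nonnegative continuous solution vanishing at infinity and deduce homogeneity from uniqueness of that minimizer combined with the scaling identity. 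Your route is arguably cleaner and does not depend on the dyadic structure of the truncation; the paper's route avoids the need to first show minimality, which is itself a comparison argument. Both are legitimate, but note that a careful write-up of your minimality step still has to cope with the fact that $v$ is only a viscosity solution, so the subtraction $u_{k}-(v+\varepsilon\bar u)$ is a viscosity subsolution rather than a classical function; this is standard but should be said. Your positivity argument (touch from above with a small parabola at a putative zero of $u$ and contradict $\Delta^{\beta,\alpha}u\geq f>0$) is essentially what Lemma \ref{strong max principle} does and is fine.

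The genuine gap is the appeal to Lemma \ref{stability prop}. That lemma, as stated, requires the sequence to be \emph{uniformly bounded in $\mathbb{R}^{n}$}, and your $u_{k}$ are not: each $u_{k}$ is individually bounded (being in $C^{\theta}$), but $u_{k}\uparrow u$ with $u\sim\Vert x\Vert^{-\gamma}$ near the origin, so $\sup_{k}\Vert u_{k}\Vert_{\infty}=\infty$. You flag this yourself (``lean on the uniform domination $u_{k}\leq\bar u\in L^{1}(\mathbb{R}^{n},w)$''), but a domination in $L^{1}(\mathbb{R}^{n},w)$ is not what the lemma asks for, and you do not prove a variant of the stability lemma under this weaker hypothesis. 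The paper fixes exactly this point by working with the cutoff sequence $\tilde u_{k}=u_{k}(1-\nu)$, where $\nu$ is an anisotropic cut-off equal to $1$ in $\tilde\Theta_{1/16}$: the $\tilde u_{k}$ vanish near the origin and are dominated by $\bar u$ away from it, hence \emph{are} uniformly bounded, and satisfy a modified equation $\Delta^{\beta,\alpha}\tilde u_{k}=g_{k}-2^{-k}u_{k}+h_{k}$ on a small ball around any $x_{0}\in\partial\tilde\Theta_{1}$, with $h_{k}$ a nonlocal correction term coming from the cut-off region whose local uniform convergence is easy to check. You should either adopt this cutoff device or prove the dominated-convergence version of Lemma \ref{stability prop} you are implicitly invoking.
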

\begin{proof}
Let $g: \left(0, +\infty \right) \rightarrow \left(0, +\infty \right)$ given by
$$
g(t) = \dfrac{1}{t^{\tilde{\gamma} + 2 \tilde{\alpha}}}.
$$
Now consider the sequence
$$
g_{k}(t):= \left \{
\begin{array}{ll}
2^{\left(\tilde{\gamma} + 2 \tilde{\alpha} \right)k}, \ \text{ if } \ t < 2^{-k}, \\
\dfrac{1}{t^{\left(\tilde{\gamma} + 2 \tilde{\alpha} \right)}}, \ \text{ if } \ 2^{-k} \leq t < 2^{k}, \\
0, \ \text{ if } \  t \geq 2^{k}.
\end{array}
\right.
$$
From Theorem \ref{Lq estimates} there exists a unique solution $u_{k} \in  H^{\alpha, q}_{\beta}\left ( \mathbb{R}^{n} \right)$ to the equation
\begin{eqnarray}
\Delta^{\beta, \alpha}u_{k}(x) + 2^{-k}u_{k}(x) = g_{k}\left(\Vert x \Vert^{}_{\tilde{\beta}}\right) \quad \text{for all} \ x \in \mathbb{R}^{n}.
\end{eqnarray}
Since
\begin{eqnarray}
\sup_{x \in \tilde{\Theta}_{2^{k}}}g_{k}\left(\Vert x \Vert^{}_{\tilde{\beta}}\right) \leq 2^{\left(\tilde{\gamma} + 2 \tilde{\alpha} \right)} \quad \text{and} \quad g_{k}\left(\Vert x \Vert^{}_{\tilde{\beta}}\right) = 0 \quad \text{in} \ \mathbb{R}^{n} \setminus \tilde{\Theta}_{2^{k}},
\end{eqnarray}
we can choose $q \geq 2$ such that $q \alpha > c$ and $g_{k} \in L^{q}(\mathbb{R}^{n})$. From Theorem \ref{main theorem} we get $u_{k} \in W_{\beta}^{\alpha, q}(\mathbb{R}^{n})$ and
\begin{eqnarray}
\vert u_{k}(x) - u_{k}(y) \vert \leq C \Vert x - y \Vert^{\alpha - \frac{c}{q}}_{}, \quad \text{for all} \ x,y \in \mathbb{R}^{n},
\end{eqnarray}
where $C>0$ depends on $\Vert u_{k} \Vert_{W_{\beta}^{\alpha, q}(\mathbb{R}^{n})}$. From Corollary \ref{max princ 1} it follows that
\begin{eqnarray}
\label{nonnegative uk}
u_{k} \geq 0 \quad \text{in} \ \mathbb{R}^{n}.
\end{eqnarray}
From Lemma \ref{barrier lemma lemma 1} we find $\varphi \in \mathcal{A_{\gamma}^{+}}$ such that
\begin{eqnarray}
\label{supersolution w}
 \Delta^{\beta, \alpha}\varphi(x) \geq g\left( \Vert x \Vert_{\tilde{\beta}}^{} \right) \geq g_{k}\left(\Vert x \Vert^{}_{\tilde{\beta}}\right) \quad \text{for all} \ x \in \mathbb{R}^{n} \setminus \left\lbrace 0 \right\rbrace.
\end{eqnarray}
Then, we combine \eqref{nonnegative uk}, \eqref{supersolution w}, and Lemma \ref{max principle} to get
\begin{eqnarray}
\label{relation between uk and w}
 \vert u_{k} \vert \leq \varphi  \quad \text{for all} \ x \in \mathbb{R}^{n} \setminus \left\lbrace 0 \right\rbrace.
\end{eqnarray}
Hence, by Lemma \ref{C-regularity in general} and \eqref{relation between uk and w}, we deduce that there exist $0< \theta < 1$ and $C_{1}> 0$ depending on $\beta$, $\alpha$, and $n$ such that
\begin{eqnarray}
\label{holder estimate for u_{k}}
 \Vert u_{k} \Vert_{C^{\theta}_{\Vert \cdot \Vert}(K)} \leq  \left( \Vert \varphi \Vert_{\infty}(K)+ \Vert \varphi \Vert_{ L^{1} (\mathbb{R}^{n}, w)} + \Vert g \Vert_{L^{\infty}(\Omega)} \right)
\end{eqnarray}
for every compact set $K \subset \Omega \subset \mathbb{R}^{n} \setminus \left\lbrace 0 \right\rbrace$, where $\Omega$ is an open. Thus, combining Ascoli-Arzel\`a, \eqref{relation between uk and w}, and \eqref{holder estimate for u_{k}} it follows that $u_{k}$ converges locally uniformly (up to a subsequence) to a non-negative function $u\in C^{0}\left( \mathbb{R}^{n} \setminus \left\lbrace 0 \right\rbrace \right) \cap L^{1} (\mathbb{R}^{n}, w)$.
We now prove $u \in \mathcal{A}^{+}_{\gamma}$, that is,
\begin{eqnarray}
\label{homogen. of u}
u(x) = r^{\gamma}u(T_{\beta,r}x) \quad \text{for all} \ x \neq 0.
\end{eqnarray}
Let $\varphi_{k,r}(x) = r^{\gamma}u_{k}(T_{\beta,r}x)$. Given $r>0$ there exists a natural number $m$ such that
\begin{eqnarray}
\label{}
2^{-m} \leq r^{\mu_{\beta}}, \ r^{2\alpha \mu_{\beta}} < 2^{m}.
\end{eqnarray}
Therefore, we see that
\begin{eqnarray}
\Delta^{\beta, \alpha}\varphi_{k,r}(x) + 2^{\left(-k-m \right)}\varphi_{k,r}(x) & = & \nonumber r^{\gamma + 2\alpha}\Delta^{\beta, \alpha}u_{k}(T_{\beta,r}x) + 2^{\left(-k-m \right)}(r^{\gamma})u_{k}(T_{\beta,r}x) \\ \nonumber
& \leq & r^{\gamma + 2\alpha}\left[ \Delta^{\beta, \alpha}u_{k}(T_{\beta,r}x) + 2^{-k}u_{k}(T_{\beta,r}x) \right]\\ \nonumber
& = & r^{\gamma + 2\alpha}g_{k}(r^{\mu_{\beta}} \Vert x \Vert_{\tilde{\beta}}).
\end{eqnarray}
We then find
\begin{eqnarray}
\Delta^{\beta, \alpha}\varphi_{k,r}(x) + 2^{\left(-k-m \right)}\varphi_{k,r}(x) & \leq & \nonumber g_{k+m}(\Vert x \Vert_{\tilde{\beta}})\\
& = & \Delta^{\beta, \alpha}u_{k+m}(x) + 2^{\left(-k-m \right)}u_{k+m}(x).
\end{eqnarray}
From Corollary \ref{max princ 1}, we obtain
\begin{eqnarray}
r^{\gamma}u_{k}(T_{\beta,r}x) = \varphi_{k,r}(x) \leq u_{k+m}(x)
\end{eqnarray}
and letting $k \rightarrow +\infty$ we find
\begin{eqnarray}
r^{\gamma}u_{}(T_{\beta,r}x)\leq u_{}(x).
\end{eqnarray}
Analogously, we have
\begin{eqnarray}
\Delta^{\beta, \alpha}\varphi_{k,r}(x) + 2^{\left(k-m \right)}\varphi_{k,r}(x) & \geq &
\Delta^{\beta, \alpha}u_{k-m}(x) + 2^{\left(k-m \right)}u_{k-m}(x),
\end{eqnarray}
that is,
\begin{eqnarray}
r^{\gamma}u_{}(T_{\beta,r}x)\geq u_{}(x).
\end{eqnarray}
Hence, \eqref{homogen. of u} is proved. To show $u$ is a solution of \eqref{main supersolution}, it suffices, see Lemma \ref{scaling lemma on the anisotropic sphere}, to prove that
\begin{eqnarray}
\label{u is a solution}
\Delta^{\beta, \alpha}u(x) = 1, \quad \text{for all} \ x \in \partial \tilde{\Theta}_{1}.
\end{eqnarray}
The strategy to get \eqref{u is a solution} is to apply Lemma \ref{stability prop}. However, the sequence $u_{k}$ is not uniformly bounded at origin. We overcome this difficulty by considering the following sequence
\begin{eqnarray}
\tilde{u}_{k} = u_{k}(1 - \nu),
\end{eqnarray}
where $\nu: \mathbb{R}^{n} \rightarrow \mathbb{R}^{n}$ is an anisotropic cut-off function such that
\begin{eqnarray}
\nu(x)= \left \{
\begin{array}{ll}
0, \ \text{ if } \ \Vert x \Vert_{\tilde{\beta}} \geq 1/16, \\
1, \ \text{ if } \ \Vert x \Vert_{\tilde{\beta}} < 1/16, \\
\end{array}
\right.
\end{eqnarray}
see Lemma 2.16 in \cite{LEITAO LP}. From \eqref{holder estimate for u_{k}} we deduce that $(\tilde{u}_{k})$ satisfies the conditions of Lemma \ref{stability prop} and, given $x_{0} \in \partial \tilde{\Theta}_{1}$, $\tilde{u}_{k}$ converges locally uniformly in $\tilde{\Theta}_{1/4C_{0}}(x_{0})$ to the function $u(1 -\nu)$. Notice that $\tilde{u}_{k}$ satisfies
\begin{eqnarray}
\Delta^{\beta, \alpha}\tilde{u}_{k}(x) = g_{k}\left(\Vert x \Vert^{}_{\tilde{\beta}}\right) - 2^{-k}u_{k}(x) + h_{k}(x) \quad \text{for all} \ x \in \tilde{\Theta}_{1/4C_{0}}(x_{0}),
\end{eqnarray}
where $h_{k}: \mathbb{R}^{n} \rightarrow \mathbb{R}^{n}$ is given by
$$
h_{k}(x) = \displaystyle\int_{\tilde{\Theta}_{1/16}(-x_{0})} \dfrac{u_{k}(x+y)}{\Vert y \Vert^{c+2\alpha}}dy.
$$
Clearly, $h_{k}$ converges locally uniformly in $\tilde{\Theta}_{1/4C_{0}}(x_{0})$ to function
$$
h(x) = \displaystyle\int_{\tilde{\Theta}_{1/16}(-x_{0})} \dfrac{u_{}(x+y)}{\Vert y \Vert^{c+2\alpha}}dy,
$$
since $\Vert y \Vert$ is bounded from below and from above in $\tilde{\Theta}_{1/16}(-x_{0})$. Hence, from Lemma \ref{stability prop}, we find
\begin{eqnarray}
\label{to deduce that u is solution}
\Delta^{\beta, \alpha}\left[ u - \nu u \right](x) =  g_{}\left(\Vert x \Vert^{}_{\tilde{\beta}}\right)  + h_{}(x), \quad \text{for all} \ x \in \tilde{\Theta}_{1/4C_{0}}(x_{0}).
\end{eqnarray}
In particular, if $x=x_{0}$ in \eqref{to deduce that u is solution}, we get \eqref{u is a solution}. Finally, by Lemma \ref{strong max principle}, we get $u \in \mathcal{A}^{+}_{\gamma}$ and the Lemma is proved.
\end{proof}

Through Lemmas \ref{scaling properties lemma} and \ref{scaling lemma on the anisotropic sphere}, we can prove, as in \cite{CWW}, that there does not exist solution $u \in \mathcal{A}^{+}_{\gamma^{\ast}}$ to the anisotropic non-local Poisson equation
$$
\Delta^{\beta, \alpha} u = \dfrac{1}{\Vert x \Vert_{\tilde{\beta}}^{\tilde{\gamma^{\ast}} + 2 \tilde{\alpha}}} \quad \text{for all} \ x \in \mathbb{R}^{n} \setminus \left\lbrace 0 \right\rbrace.
$$
In fact, we get:
\begin{lemma}
\label{barrier lemma lemma 2}
Let $ \alpha_{0} < \alpha < 2/b_{\max}$ and $0 < \gamma \leq \gamma^{\ast}$. If $u \in \mathcal{A}_{\gamma}^{+}$ satisfies
\begin{eqnarray}
\label{Poisson equation fails}
\Delta^{\beta, \alpha} u (x) \geq \dfrac{1}{\Vert x \Vert_{\tilde{\beta}}^{\tilde{\gamma} + 2\tilde{\alpha}}}, \quad \text{for all} \ x \in \mathbb{R}^{n} \setminus \left\lbrace 0 \right\rbrace,
\end{eqnarray}
then we get $\gamma < \gamma^{\ast}$.
\end{lemma}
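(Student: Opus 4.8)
Since $\Delta^{\beta,\alpha}u\ge\Vert x\Vert_{\tilde\beta}^{-(\tilde\gamma+2\tilde\alpha)}>0$ on $\mathbb{R}^n\setminus\{0\}$, in particular $\Delta^{\beta,\alpha}u\ge 0$ and $u\in\mathcal{A}_\gamma^+$, so $\gamma\in\mathcal{I}_{\beta\alpha}$ and $\gamma\le\gamma^\ast$; the content of the lemma is the \emph{strict} inequality. The plan, following \cite{CWW}, is to use the ``gap'' in the Poisson inequality to produce, for some small $\varepsilon>0$, a positive $-(\gamma+\varepsilon)$-homogeneous \emph{supersolution of the homogeneous equation} $\Delta^{\beta,\alpha}v\ge 0$ on $\mathbb{R}^n\setminus\{0\}$; this forces $\gamma+\varepsilon\in\mathcal{I}_{\beta\alpha}$, hence $\gamma<\gamma+\varepsilon\le\gamma^\ast$.

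Concretely, restricting the hypothesis to the anisotropic sphere $\partial\tilde\Theta_1$ (where $\Vert x\Vert_{\tilde\beta}=1$) gives $\Delta^{\beta,\alpha}u\ge 1$ on $\partial\tilde\Theta_1$. For $\varepsilon>0$ set $\eta_\varepsilon(x):=\Vert x\Vert_{\tilde\beta}^{-\varepsilon/\mu_\beta}$ --- a $C^2$, strictly positive function on $\mathbb{R}^n\setminus\{0\}$ with $\eta_\varepsilon\equiv 1$ on $\partial\tilde\Theta_1$ --- and put $w_\varepsilon:=\eta_\varepsilon\,u$. From the scaling identity $\Vert T_{\beta,r}x\Vert_{\tilde\beta}=r^{\mu_\beta}\Vert x\Vert_{\tilde\beta}$ of Lemma \ref{scaling properties lemma} one checks directly that $w_\varepsilon\in\mathcal{A}_{\gamma+\varepsilon}^+$. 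Moreover, by the homogeneity of $w_\varepsilon$ and the intrinsic scaling of $\Delta^{\beta,\alpha}$ (the argument of Lemma \ref{scaling lemma on the anisotropic sphere} and of the proof of Proposition \ref{main supersolution prop}), it suffices to check $\Delta^{\beta,\alpha}w_\varepsilon\ge 0$ on $\partial\tilde\Theta_1$. Comparing a $C^2$ function touching $w_\varepsilon$ from above at a point $x_0\in\partial\tilde\Theta_1$ with the test function obtained by dividing it by $\eta_\varepsilon$ (which touches $u$ from above at $x_0$ since $\eta_\varepsilon>0$), and using $\eta_\varepsilon(x_0)=1$, one obtains in the viscosity sense
\[
\Delta^{\beta,\alpha}w_\varepsilon(x_0)\ \ge\ \Delta^{\beta,\alpha}u(x_0)\ -\ C_{\beta,\alpha}\int_{\mathbb{R}^n}\frac{\big(\eta_\varepsilon(x_0+y)-1\big)\,\psi(x_0+y)}{\Vert y\Vert^{c+2\alpha}}\,dy\ \ge\ 1-R_\varepsilon(x_0),
\]
where $\psi$ equals the (rescaled) test function near $x_0$ and equals $u$ away from it, and $R_\varepsilon(x_0)$ is the displayed integral.

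The remaining --- and main --- task is to bound $R_\varepsilon(x_0)$, say by $1/2$, uniformly in $x_0\in\partial\tilde\Theta_1$ for $\varepsilon$ small, and this is where the genuinely nonlocal difficulty lies: the perturbation $\eta_\varepsilon$ acts at every scale. I would split $R_\varepsilon(x_0)$ into a neighbourhood of $x_0$, a middle annulus, a neighbourhood of the origin, and a neighbourhood of infinity. Around $x_0$ the symmetry $y\mapsto-y$ of the integration domain and of $\Vert y\Vert^{c+2\alpha}$ cancels the first-order part, and the rest is controlled by $O(\varepsilon)\int_{\tilde\Theta_r}|y|^2\Vert y\Vert^{-c-2\alpha}\,dy$, which is finite \emph{precisely because} $\alpha<2/b_{\max}$ --- this is \eqref{equivalence between norms on anisotropic sphere 2} in Lemma \ref{scaling properties lemma}, via $|T_{\beta,t}y|^2\le t^{4/b_{\max}}|y|^2$ --- together with the interior regularity of $u$ away from the origin (Lemma \ref{C-regularity in general}); on the middle annulus $\eta_\varepsilon-1=O(\varepsilon)$ and $u$ is bounded; near the origin $\psi=u$ is $-\gamma$-homogeneous with $\gamma=\gamma^\ast<c$ (Lemma \ref{fund. sol. lemma 1}), so the integral converges and tends to $0$ as the cutoff radius shrinks; near infinity $\psi=u\to 0$ and $\Vert y\Vert^{-c-2\alpha}$ is integrable there. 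Anisotropic polar coordinates, exactly as in the proof of Lemma \ref{scaling properties lemma}, render all these estimates uniform in $x_0$. Combining the pieces gives $\Delta^{\beta,\alpha}w_\varepsilon\ge 1/2>0$ on $\partial\tilde\Theta_1$, hence $\gamma+\varepsilon\in\mathcal{I}_{\beta\alpha}$ and $\gamma<\gamma^\ast$, as desired.
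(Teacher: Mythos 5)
The paper itself gives no proof of this lemma; it simply defers to \cite{CWW} together with Lemmas \ref{scaling properties lemma} and \ref{scaling lemma on the anisotropic sphere}. Your strategy --- multiply the given supersolution $u$ by $\eta_\varepsilon(x)=\Vert x\Vert_{\tilde\beta}^{-\varepsilon/\mu_\beta}$ to manufacture a $-(\gamma+\varepsilon)$-homogeneous $w_\varepsilon\in\mathcal{A}^+_{\gamma+\varepsilon}$, then use the unit-size gap $\Delta^{\beta,\alpha}u\ge 1$ on $\partial\tilde\Theta_1$ plus Lemma \ref{scaling lemma on the anisotropic sphere} to absorb the perturbation --- is indeed the standard multiplicative-perturbation argument that \cite{CWW} (following Armstrong--Sirakov--Smart) runs, and the origin/infinity/annulus pieces of your error estimate $R_\varepsilon(x_0)$ are correct: near the origin the integral converges because $\gamma\le\gamma^\ast<c$ (Lemma \ref{fund. sol. lemma 1}; note this is where you need $\gamma\le\gamma^\ast$, not $\gamma=\gamma^\ast$ as you wrote), at infinity because $\gamma>0$ and $u$ is bounded on spheres, and on a fixed annulus because $\eta_\varepsilon-1=O(\varepsilon)$ there.

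There is, however, a genuine gap in the near-$x_0$ estimate, precisely where you say ``the genuinely nonlocal difficulty lies.'' After the symmetrization $y\mapsto 2x_0-y$, the integrand becomes $h(y)+h(2x_0-y)-2h(x_0)$ with $h(y)=\tilde\psi(y)\bigl(1-\eta_\varepsilon(y)\bigr)$, and to obtain the claimed $O(\varepsilon)\,|y-x_0|^2$ bound one needs \emph{second-order} control of $h$, hence of $\tilde\psi$. But $\tilde\psi$ is either an arbitrary $C^2$ viscosity test function for $u$ (whose $D^2$ norm is not uniformly controlled), or, after letting the patch radius shrink, $u$ itself --- and $u$ is only assumed continuous. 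Your appeal to ``interior regularity of $u$ away from the origin (Lemma \ref{C-regularity in general})'' does not repair this: that lemma is a Schauder estimate for $C^2$ solutions of the two-sided Schr\"odinger equation $\Delta^{\beta,\alpha}u+\kappa u=f$ with $f\in L^\infty$, not for viscosity \emph{super}solutions of an \emph{inequality}; and even granting $C^\theta_{\Vert\cdot\Vert}$ regularity with $\theta<1$, a Taylor bound of the form $|h(y)+h(2x_0-y)|\lesssim\varepsilon\,|y-x_0|^{1+\theta}$ is integrated against $\Vert x_0-y\Vert^{-c-2\alpha}$, and in anisotropic polar coordinates this requires $(1+\theta)\tfrac{2}{b_{\max}}>2\alpha$, i.e.\ $\theta>\alpha b_{\max}-1$, which tends to $1^-$ exactly in the regime $\alpha\to 2/b_{\max}$ that the whole paper works in. So mere H\"older continuity of $u$ is not enough, and the $O(\varepsilon)\int|y|^2\Vert y\Vert^{-c-2\alpha}$ bound you invoke does not follow. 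A correct proof must either establish higher ($C^{1,1}$-type) interior regularity for $u$ \emph{before} running this computation, or handle the principal value near $x_0$ by a different device that exploits only the viscosity inequality (for example, exploiting the monotonicity of the patched evaluation $\Delta^{\beta,\alpha}\psi_r(x_0)$ in the patch radius $r$ to isolate the $\varphi$-independent part, and then bounding the resulting one-sided quantity $\lim_{r\to 0}\int_{\mathbb{R}^n\setminus B_r(x_0)}u(y)(1-\eta_\varepsilon(y))\Vert x_0-y\Vert^{-c-2\alpha}\,dy$ by a sign and cancellation argument). As written, the key estimate is asserted rather than proved.

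Two small additional remarks: the reflection symmetry used near $x_0$ is $y\mapsto 2x_0-y$, not $y\mapsto-y$ (as you wrote), although this is consistent with reading your $y$ as the translated variable; and to pass from $\Delta^{\beta,\alpha}w_\varepsilon\ge 1/2$ on $\partial\tilde\Theta_1$ to membership in $\mathcal{I}_{\beta\alpha}$ you only need $\Delta^{\beta,\alpha}w_\varepsilon\ge 0$ everywhere, which follows from Lemma \ref{scaling lemma on the anisotropic sphere} (applied with right-hand side $0$), as you correctly indicate.
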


\begin{remark}
\label{conseq. of main supersolution}
Assume that $ \alpha_{0} < \alpha < 2/b_{\max}$, $0 < \gamma < \gamma^{\ast}$ and $u \in \mathcal{A}^{+}_{\gamma}$. Via $C^{\theta}_{\Vert \cdot \Vert_{\tilde{\beta}}}$-estimates and a normalization argument, note that Lemma \ref{barrier lemma lemma 2} and Proposition \ref{main supersolution prop} allow us to let $\gamma \rightarrow \gamma^{\ast}$ in \eqref{main supersolution} and achieve our fundamental solution. Indeed, we have
\begin{eqnarray}
u(x) = \Vert x \Vert^{- \gamma} u \left( T^{-1}_{\beta, \Vert x \Vert_{}}x\right), \quad \text{for all} \ x \in \partial\Theta^{\tilde{\beta}}_{1}.
\end{eqnarray}
By the equivalence of anisotropic norms on anisotropic spheres, see assertion 3 of Lemma \ref{elemntary results}, we deduce that
\begin{eqnarray}
\Vert u_{\gamma} \Vert_{L^{\infty}\left(\partial \Theta^{\tilde{\beta}}_{1} \right)} \leq C \Vert u_{\gamma} \Vert_{L^{\infty}(\partial \Theta_{1})}
\end{eqnarray}
for some constant $C>0$ depending only on $\beta$ and $n$. Hence, if $u_{\gamma}$
satisfies \eqref{main supersolution}, we can use Lemmas \ref{barrier lemma lemma 2} and $C^{\theta}_{\Vert \cdot \Vert_{\tilde{\beta}}}$-estimates, see Lemma \ref{C-regularity in general}, to argue as in Proposition \ref{main supersolution prop} and find
\begin{eqnarray}
\label{conseq. of main supersolution 1}
\lim_{\gamma \rightarrow \gamma^{\ast}}\Vert u_{\gamma} \Vert_{L^{\infty}(\partial \Theta_{1})} = +\infty.
\end{eqnarray}
See Lemma 4.5 in \cite{CWW} and Section \ref{sec Existence of Fundamental solutions and consequences} for more details.
\end{remark}

The next result enables a comparison principle for $-\gamma$-homogeneous solutions. Its proof can be found in \cite{CWW} via Lemma \ref{strong max principle}.

\begin{lemma}
\label{strong maximum principle for homogeneous function}
If $0 < \gamma < c$ and $u, v \in \mathcal{A}_{\gamma}$ satisfy
\begin{eqnarray}
\Delta^{\beta, \alpha}u \leq 0 \leq \Delta^{\beta, \alpha}v \quad \text{in} \ \mathbb{R}^{n}\setminus \left\lbrace 0 \right\rbrace.
\end{eqnarray}
Then either $u =0$ or $v=0$ or $u = c_{0} v$ and $\Delta^{\beta, \alpha}u = 0 = \Delta^{\beta, \alpha}v$ in $\mathbb{R}^{n}\setminus \left\lbrace 0 \right\rbrace$, \\ for some constant $c_{0}>0$.
\end{lemma}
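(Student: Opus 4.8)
The argument follows \cite{CWW} and consists of two applications of the strong maximum principle, Lemma \ref{strong max principle}. First I would dispose of the trivial cases: if $u \equiv 0$ or $v \equiv 0$ there is nothing to prove, so assume from now on that neither vanishes identically. Since $v \in \mathcal{A}_{\gamma}$, the function $-v$ belongs to $C^{0}(\mathbb{R}^{n}\setminus\{0\})\cap L^{1}_{loc}(\mathbb{R}^{n})$ — the local integrability near the origin being precisely where the hypothesis $\gamma < c$ enters, $-v$ being $-\gamma$-homogeneous — is $\le 0$, and, by linearity of $\Delta^{\beta,\alpha}$ in the viscosity sense (see \cite{CLU, SL}), satisfies $\Delta^{\beta,\alpha}(-v) \le 0$. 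If $v(\bar x) = 0$ for some $\bar x \neq 0$, Lemma \ref{strong max principle} forces $-v \equiv 0$, contradicting $v \not\equiv 0$; hence $v > 0$ throughout $\mathbb{R}^{n}\setminus\{0\}$.

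Next I would compare $u$ with the family $\{tv\}_{t>0}$. Both $u$ and $v$ are continuous and $-\gamma$-homogeneous, so on the compact anisotropic sphere $\partial\tilde{\Theta}_{1}$ (which does not meet the origin) $v$ is bounded below by a positive constant and $u$ above by a finite one; thus
\[
c_{0} := \max_{x \in \partial\tilde{\Theta}_{1}} \frac{u(x)}{v(x)}
\]
is finite, is attained at some $x_{\ast} \in \partial\tilde{\Theta}_{1}$, and is strictly positive (otherwise $u \le 0$ on $\partial\tilde{\Theta}_{1}$, whence $u \equiv 0$ by homogeneity together with $u \ge 0$, a contradiction). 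Homogeneity of both sides upgrades the inequality $c_{0}v \ge u$ from $\partial\tilde{\Theta}_{1}$ to all of $\mathbb{R}^{n}\setminus\{0\}$, with equality at $x_{\ast}$.

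Finally, put $w := c_{0}v - u$. Then $w \ge 0$ on $\mathbb{R}^{n}\setminus\{0\}$, $w(x_{\ast}) = 0$, $w \in C^{0}(\mathbb{R}^{n}\setminus\{0\})\cap L^{1}_{loc}(\mathbb{R}^{n})$, and since $c_{0} > 0$ the linearity of $\Delta^{\beta,\alpha}$ gives $\Delta^{\beta,\alpha}(-w) = \Delta^{\beta,\alpha}u - c_{0}\,\Delta^{\beta,\alpha}v \le 0$ in the viscosity sense, with $-w \le 0$. Lemma \ref{strong max principle} applied to $-w$ then yields $w \equiv 0$, i.e. $u = c_{0}v$ in $\mathbb{R}^{n}\setminus\{0\}$. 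Substituting this back, $\Delta^{\beta,\alpha}u = c_{0}\,\Delta^{\beta,\alpha}v$ together with $\Delta^{\beta,\alpha}u \le 0 \le \Delta^{\beta,\alpha}v$ and $c_{0} > 0$ forces $\Delta^{\beta,\alpha}u = 0 = \Delta^{\beta,\alpha}v$, which is the assertion.

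I expect the only genuinely delicate point to be the viscosity-sense linearity of the nonlocal operator $\Delta^{\beta,\alpha}$ — namely that a nonnegative linear combination of a viscosity subsolution and a viscosity supersolution is again of the expected type, so that the functions $-v$ and $-w$ above are legitimate inputs to Lemma \ref{strong max principle} — which is available from the general theory developed in \cite{CLU, SL}. Everything else is routine homogeneity and compactness bookkeeping, and the whole scheme is the one carried out in \cite{CWW}.
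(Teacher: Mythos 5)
Your proof is correct and follows exactly the route the paper indicates: the paper omits the argument and refers the reader to \cite{CWW} ``via Lemma \ref{strong max principle}'', and that is precisely what you do --- apply the strong maximum principle once to rule out interior zeros of $v$, use homogeneity and compactness of $\partial\tilde{\Theta}_{1}$ to produce the optimal constant $c_0$ with a touching point, then apply the strong maximum principle a second time to $c_0v-u$. You also correctly flag the one genuinely delicate ingredient, namely that $u-c_0v$ must be a legitimate viscosity subsolution so that Lemma \ref{strong max principle} applies; this requires either interior regularity making $\Delta^{\beta,\alpha}$ evaluate pointwise, or the linear-combination lemma for viscosity sub/supersolutions of this nonlocal operator from \cite{CLU, SL}, and $\gamma<c$ is exactly what guarantees the $L^1_{loc}$ integrability near the origin that the strong maximum principle needs.
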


From previous Lemma and the existence of supersolution for $0 < \gamma < \gamma^{\ast}$ in $A^{+}_{\gamma}$, see Lemma \ref{barrier lemma lemma 2}, we get a maximum principle for subsolutions which guarantees the uniqueness of the fundamental solution (up to a constant).
\begin{corollary}
\label{barrier lemma corollary 1}
Let $ \alpha_{0} < \alpha < 2/b_{\max}$ and $0 < \gamma < \gamma^{\ast}$. If $u \in \mathcal{A}_{\gamma}$ is such that
\begin{eqnarray}
\Delta^{\beta, \alpha} u (x) \leq 0, \quad \text{for all} \ x \in \mathbb{R}^{n} \setminus \left\lbrace 0 \right\rbrace,
\end{eqnarray}
then $u=0$ in $\mathbb{R}^{n} \setminus \left\lbrace 0 \right\rbrace$.
\end{corollary}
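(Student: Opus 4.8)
The plan is to pit the hypothesized subsolution $u$ against a \emph{strict} supersolution of the same homogeneity, produced by Lemma \ref{barrier lemma lemma 1}, and then to invoke the homogeneous strong maximum principle (Lemma \ref{strong maximum principle for homogeneous function}) to collapse the three resulting alternatives down to $u \equiv 0$.

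First, since $\alpha_{0} < \alpha < 2/b_{\max}$ and $0 < \gamma < \gamma^{\ast}$, Lemma \ref{barrier lemma lemma 1} supplies a function $v \in \mathcal{A}_{\gamma}^{+}$ with
\[
\Delta^{\beta, \alpha} v(x) \geq \frac{1}{\Vert x \Vert_{\tilde{\beta}}^{\tilde{\gamma} + 2\tilde{\alpha}}} > 0, \quad \text{for all } x \in \mathbb{R}^{n} \setminus \left\lbrace 0 \right\rbrace .
\]
In particular $\Delta^{\beta, \alpha} v \geq 0$ in $\mathbb{R}^{n} \setminus \left\lbrace 0 \right\rbrace$ and $v > 0$, so $v$ is not identically zero. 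By Lemma \ref{fund. sol. lemma 1} we have $\gamma < \gamma^{\ast} < c$, hence $0 < \gamma < c$, and both $u$ and $v$ lie in $\mathcal{A}_{\gamma}$ (using $\mathcal{A}_{\gamma}^{+} \subset \mathcal{A}_{\gamma}$). Thus the pair $(u,v)$ satisfies all the hypotheses of Lemma \ref{strong maximum principle for homogeneous function}, since $\Delta^{\beta, \alpha} u \leq 0 \leq \Delta^{\beta, \alpha} v$ in $\mathbb{R}^{n} \setminus \left\lbrace 0 \right\rbrace$.

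Lemma \ref{strong maximum principle for homogeneous function} then leaves exactly three possibilities: $u = 0$; or $v = 0$; or $u = c_{0} v$ with $\Delta^{\beta, \alpha} u = 0 = \Delta^{\beta, \alpha} v$ in $\mathbb{R}^{n} \setminus \left\lbrace 0 \right\rbrace$ for some $c_{0} > 0$. The case $v = 0$ is impossible because $v > 0$. In the third case we would have $\Delta^{\beta, \alpha} v \equiv 0$, which contradicts the strict lower bound $\Delta^{\beta, \alpha} v(x) \geq \Vert x \Vert_{\tilde{\beta}}^{-(\tilde{\gamma} + 2\tilde{\alpha})} > 0$ established above. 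Therefore $u = 0$ in $\mathbb{R}^{n} \setminus \left\lbrace 0 \right\rbrace$, as claimed.

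The only delicate point — and the place where the argument would break — is the elimination of the branch $u = c_{0} v$: this is precisely why we need a \emph{strict} supersolution in $\mathcal{A}_{\gamma}^{+}$ rather than merely a $-\gamma$-homogeneous function with nonnegative $\Delta^{\beta,\alpha}$, and why the strict inequality $\gamma < \gamma^{\ast}$ is indispensable. At $\gamma = \gamma^{\ast}$ no such strict supersolution exists (Lemma \ref{barrier lemma lemma 2}), and indeed the conclusion genuinely fails there, as the homogeneous viscosity solution $\Psi_{\beta,\alpha}$ of Theorem \ref{existence of fundamental solutions} furnishes a nonzero element of $\mathcal{A}_{\gamma^{\ast}}$ annihilated by $\Delta^{\beta,\alpha}$; see also Remark \ref{conseq. of main supersolution}.
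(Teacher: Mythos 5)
Your proof is correct and follows precisely the route the paper sketches: obtain a strict positive supersolution $v \in \mathcal{A}_{\gamma}^{+}$ (from Lemma \ref{barrier lemma lemma 1}), note $0 < \gamma < \gamma^{\ast} < c$ via Lemma \ref{fund. sol. lemma 1}, and invoke the trichotomy of Lemma \ref{strong maximum principle for homogeneous function} to eliminate the branches $v = 0$ and $u = c_{0} v$. (Minor remark: the paper's preamble to the corollary cites Lemma \ref{barrier lemma lemma 2} for the existence of the supersolution, but that lemma asserts non-existence at $\gamma = \gamma^{\ast}$; the existence statement you actually need is Lemma \ref{barrier lemma lemma 1}, so your citation is the right one.)
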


\section{Proof of Theorem \ref{main theorem}}

This section is devoted to the proof of Theorem \ref{main theorem} which ensures $C^{\theta}_{\Vert \cdot \Vert}$ for functions in $H_{\beta}^{\alpha, q}(\mathbb{R}^{n})$ for some $0 < \theta < 1$ when $c=n$.

\begin{proof}[Proof of Theorem \ref{main theorem}]
Given $x, y \in \mathbb{R}^{n}$ assume that $R = \Vert x - y \Vert$ is positive. Let $C> 0$ be a constant depending on $\beta$ and $n$ such that
\begin{eqnarray}
E_{R}(x) \cup E_{R}(y) \subset E_{CR}(x) \cap E_{CR}(y).
\end{eqnarray}
If $u \in W_{\beta}^{\alpha, q}(\mathbb{R}^{n})$ we estimate
\begin{eqnarray}
\label{main theorem 1.1}
\vert u(x) - u(y) \vert \leq \vert u(x) - u_{x, CR} \vert + \vert u_{x, CR} - u_{y, CR} \vert + \vert u_{y, CR} - u(y) \vert.
\end{eqnarray}
By Lemma \ref{decay of average lemma} and the continuity of function $x \rightarrow u_{x, R}$ we deduce that the sequence of functions
$$
v_{R}(x):= u_{x, R}, \quad \text{for} \ x \in \mathbb{R}^{n},
$$
converges uniformly in $\mathbb{R}^{n}$ to a continuous function $h$ as $R \rightarrow 0$. Moreover, by Lebesgue differentiation theorem for anisotropic balls, see Lemma 3 in \cite{CC} or Lemma 3.6 in \cite{CLU}, we find
\begin{eqnarray}
\label{main theorem 1}
u(x) = \lim_{R \rightarrow 0^{+}} u_{x, R} = h(x), \quad \text{for almost everywhere} \ x \in \mathbb{R}^{n},
\end{eqnarray}
and since $h$ is continuous, we get $u = h$ in $\mathbb{R}^{n}$. Lemma \ref{decay of average lemma} also provide
\begin{eqnarray}
\label{main theorem 2}
\vert u(x) - u_{x, CR} \vert \leq C_{1}\left[ u\right]_{q, \alpha q}R^{\theta} \quad \text{and} \quad \vert u(y) - u_{y, CR} \vert \leq C_{1}\left[ u\right]_{q, \alpha q}R^{\theta}.
\end{eqnarray}
Integrating
\begin{eqnarray*}
\vert u_{x, CR} - u_{y, CR} \vert \leq \vert u(z) - u_{x, CR}  \vert + \vert u(z) - u_{y, CR} \vert,
\end{eqnarray*}
on $ E_{CR}(x) \cap E_{CR}(y)$ and taking into account that
$$
E_{R}(x)\subset E_{CR}(x) \cap E_{CR}(y) \subset E_{CR}(x) \quad \text{and} \quad  E_{R}(y)\subset E_{CR}(x) \cap E_{CR}(y)\subset E_{CR}(x)
$$
we estimate
\begin{eqnarray}
\label{main theorem 4}
\vert u_{x, CR} - u_{y, CR} \vert & \leq & \nonumber \dashint_{ E_{CR}(x) \cap E_{CR}(y)}\vert u(z) - u_{x, CR}\vert \, dz +  \dashint_{ E_{CR}(x) \cap E_{CR}(y)}\vert u(z) - u_{y, CR}\vert \, dz \\
& \leq & C_{2} \left( \dashint_{ E_{CR}(x)}\vert u(z) - u_{x, CR}\vert \, dz + \dashint_{E_{CR}(y)}\vert u(z) - u_{y, CR}\vert \, dz \right).
\end{eqnarray}
From \eqref{main theorem 4} and H\" older Inequality it follows that
\begin{eqnarray}
\label{main theorem 5}
\vert u_{x, CR} - u_{y, CR} \vert  \leq  C_{3}\left[ u\right]_{q, \alpha q}R^{\theta}.
\end{eqnarray}
Combining \eqref{main theorem 1.1}, \eqref{main theorem 2}, and \eqref{main theorem 5} we obtain
\begin{eqnarray}
\label{main theorem 6}
\vert u(x) - u(y) \vert \leq \left( C_{4} \Vert u \Vert_{W_{\beta}^{\alpha, q}(\mathbb{R}^{n})} \right) \Vert x - y \Vert^{\theta}.
\end{eqnarray}
Finally, by Lemma \ref{elemntary results}, we have
\begin{eqnarray}
\label{main theorem 7}
\Vert u \Vert_{L^{\infty}\left( \mathbb{R}^{n}\right)} \leq C_{5} \Vert u \Vert_{W_{\beta}^{\alpha, q}(\mathbb{R}^{n})}.
\end{eqnarray}
Hence, from \eqref{main theorem 6}, \eqref{main theorem 7}, and the second assertion of Lemma \ref{Main. Lemma} we conclude that
$$
H_{r_{\beta^{}}}^{\alpha, q}(\mathbb{R}^{n}) \hookrightarrow W_{\beta}^{\alpha, q}(\mathbb{R}^{n}) \hookrightarrow C^{\theta}_{\Vert \cdot \Vert}.
$$
\end{proof}
Theorem \ref{main theorem} assures pointwise vanishing at infinity for functions in anisotropic Bessel spaces when $c = n$.

\begin{corollary}
\label{u goes to zero as x tends to infty lemma}
Let $2 \leq q < +\infty$ and $u \in H^{\alpha, q}_{\beta} \left( \mathbb{R}^{n} \right)$ for $0 < \alpha < 2/b_{\max}$. If $c=n$, we get
\begin{eqnarray}
\label{u goes to zero as x tends to infty}
\lim_{\Vert x \Vert \rightarrow +\infty } u(x) = 0.
\end{eqnarray}
\end{corollary}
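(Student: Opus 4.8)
\textbf{The plan} is to reduce the statement to the elementary assertion underlying \eqref{u goes to zero as x tends to infty preliminaries}, namely that any $v\in C^{\theta}_{\Vert\cdot\Vert}(\mathbb{R}^{n})\cap L^{q}(\mathbb{R}^{n})$ (with $0<\theta<1$, $1\le q<+\infty$) satisfies $\lim_{\Vert x\Vert\to+\infty}v(x)=0$, and then to check that $u$ itself lies in $C^{\theta}_{\Vert\cdot\Vert}(\mathbb{R}^{n})\cap L^{q}(\mathbb{R}^{n})$. For the latter: since $c=n$, relation \eqref{equality Bessel Lizorkin and Bessel Leitao} identifies $H^{\alpha,q}_{\beta}(\mathbb{R}^{n})$ with $H^{\alpha,q}_{r_{\beta}}(\mathbb{R}^{n})$, so in particular $u\in L^{q}(\mathbb{R}^{n})$, and the second part of Theorem~\ref{main theorem} (here $c_{\beta}=n$ and $\alpha q>n$) gives $u\in C^{\theta}_{\Vert\cdot\Vert}(\mathbb{R}^{n})$ with $\theta=\alpha-n/q$ and $[u]_{C^{\theta}_{\Vert\cdot\Vert}}<+\infty$.

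\textbf{For the reduced claim} I would argue by contradiction. If it fails, there are $\varepsilon>0$ and a sequence $(x_{k})$ with $\Vert x_{k}\Vert\to+\infty$ and $|v(x_{k})|\ge\varepsilon$. Setting $\delta=(\varepsilon/2L)^{1/\theta}$, where $L=[v]_{C^{\theta}_{\Vert\cdot\Vert}}$, the Hölder bound forces $|v|\ge\varepsilon/2$ on every anisotropic ball $\Theta_{\delta}(x_{k})$, and assertion~1 of Lemma~\ref{elemntary results} together with $|E_{r}(x)|=Cr^{c}$ yields a uniform lower bound $|\Theta_{\delta}(x_{k})|\ge m_{0}>0$. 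Using the quasi-triangle inequality $\Vert a-b\Vert\le C_{0}(\Vert a\Vert+\Vert b\Vert)$ one checks that $\Theta_{\delta}(x_{j})$ and $\Theta_{\delta}(x_{k})$ are disjoint as soon as $\Vert x_{j}-x_{k}\Vert\ge 2C_{0}\delta$; since each anisotropic ball is bounded (again assertion~1 of Lemma~\ref{elemntary results}) while $\Vert x_{k}\Vert\to+\infty$, I can pass to a subsequence $(x_{k_{j}})$ along which the balls $\Theta_{\delta}(x_{k_{j}})$ are pairwise disjoint. Then
\[
\int_{\mathbb{R}^{n}}|v(z)|^{q}\,dz\ \ge\ \sum_{j}\int_{\Theta_{\delta}(x_{k_{j}})}|v(z)|^{q}\,dz\ \ge\ \sum_{j}\Big(\frac{\varepsilon}{2}\Big)^{q}m_{0}\ =\ +\infty,
\]
contradicting $v\in L^{q}(\mathbb{R}^{n})$; hence $\lim_{\Vert x\Vert\to+\infty}v(x)=0$. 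Applying this with $v=u$ completes the proof.

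\textbf{The main obstacle} will be the extraction of the pairwise disjoint family of anisotropic balls on which $|v|$ stays bounded away from $0$: this is where the quasi-triangular inequality for $\Vert\cdot\Vert$ and the uniform volume bound $|\Theta_{\delta}(x)|\ge m_{0}$ from Lemma~\ref{elemntary results} are essential (one also needs that sublevel sets $\{\Vert x\Vert\le M\}$ are bounded, which follows from the same lemma). Once that family is available, the $L^{q}$-summation immediately contradicts integrability, and the rest — the reduction via \eqref{equality Bessel Lizorkin and Bessel Leitao} and Theorem~\ref{main theorem}, and the bookkeeping of constants — is routine.
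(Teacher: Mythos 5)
Your proof is correct, and the reduction to the embedding $H^{\alpha,q}_{\beta}=H^{\alpha,q}_{r_{\beta}}\hookrightarrow W^{\alpha,q}_{\beta}\hookrightarrow C^{\theta}_{\Vert\cdot\Vert}$ is the same starting point the paper uses. The difference is in the second half: the paper argues \emph{directly} and quantitatively. Starting from $|u(x)|\le |u(y)|+C\Vert x-y\Vert^{\theta}$, it integrates $|u(x)|^{q}/2^{q}-C^{q}\Vert x-y\Vert^{q\theta}\le|u(y)|^{q}$ over a \emph{single} anisotropic ball $\Theta_{h}(x)$ contained in the tail $\{\Vert y\Vert>R\}$, uses $|\Theta_{h}|=C_{1}h^{c}$, and then optimizes the radius $h$ to balance the Hölder error $h^{q\theta}$ against the tail mass divided by the volume $h^{-c}$; this produces an explicit $R_{1}(\varepsilon)$ with $|u(x)|<\varepsilon$ for $\Vert x\Vert>R_{1}$. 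You instead argue by contradiction: from $|v(x_{k})|\ge\varepsilon$ you spread the lower bound to a ball $\Theta_{\delta}(x_{k})$, and then use the quasi-triangle inequality to extract a subsequence of pairwise disjoint balls of uniformly positive volume, summing their contributions to blow up $\Vert v\Vert_{L^{q}}^{q}$. Both approaches rest on exactly the same ingredients (the $C^{\theta}_{\Vert\cdot\Vert}$-bound, $u\in L^{q}$, and $|\Theta_{r}|\sim r^{c}$), so they are variants of the same idea rather than fundamentally different proofs. The paper's version avoids the subsequence extraction and yields a decay modulus for free; yours is the classical disjoint-ball packing argument and is arguably more transparent. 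One useful thing you do that the paper does not: you flag explicitly that $\alpha q>c_{\beta}=n$ is needed to invoke Theorem~\ref{main theorem} (so that $\theta>0$); the corollary as stated does not list this hypothesis, but both your proof and the paper's require it implicitly.
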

\begin{proof}
By Theorem \ref{main theorem} there exists a constant $C_{}>1$ such that
\begin{eqnarray}
\label{u goes to zero as x tends to infty 1}
\vert u(x) - u(y) \vert \leq C_{} \Vert x - y \Vert^{\theta}, \quad \text{for all} \ x,y \in \mathbb{R}^{n}.
\end{eqnarray}
Let $C_{1}>0$ a constant satisfying $\vert \Theta_{r} \vert = C_{1}r^{c}$ and
\begin{eqnarray}
C_{2} = \dfrac{1}{2^{q+1}}\left[ \dfrac{\left(2^{q+1} C^{q}_{} \right)^{\frac{c}{q \theta}}}{C_{1}} \right]^{-1}.
\end{eqnarray}
Since $u \in L^{q}\left(\mathbb{R}^{n}\right)$ there exists $R > 1$ such that
\begin{eqnarray}
\label{lp integrability of u}
\int_{\mathbb{R}^{n} \setminus \Theta_{R}} \vert u(y) \vert^{q} dy  < C_{2}\varepsilon^{q + \frac{c}{ \theta}}.
\end{eqnarray}
Choose $R_{1} > 1$ depending on $R$ which satisfies
\begin{eqnarray}
\Theta_{1}(x) \subset \mathbb{R}^{n} \setminus \Theta_{R} \quad \text{if} \ \Vert x \Vert > R_{1}.
\end{eqnarray}
On the other hand, from \eqref{u goes to zero as x tends to infty 1} we have
\begin{eqnarray}
\label{u goes to zero as x tends to infty 2}
\dfrac{\vert u(x) \vert^{q}}{2^{q}} - C^{q}_{} \Vert x - y \Vert^{q\theta} \leq \vert u(y) \vert^{q}, \quad \text{for all} \ x,y \in \mathbb{R}^{n}.
\end{eqnarray}
Combining \eqref{lp integrability of u} and \eqref{u goes to zero as x tends to infty 2} it follows that
\begin{eqnarray}
\label{u goes to zero as x tends to infty 3}
\vert u(x) \vert^{q} < 2^{q} C^{q}_{} h^{q \theta} + 2^{p} \dfrac{C_{2}\varepsilon^{q + \frac{c}{ \theta}}}{C_{1}h^{c}}, \quad \text{for all} \ h \in (0, 1).
\end{eqnarray}
In particular, if $h = \left( \dfrac{\varepsilon^{q}}{2^{q+1}C_{}}\right)^{\frac{1}{q\theta}}$ in \eqref{u goes to zero as x tends to infty 3}, we find
\begin{eqnarray}
\vert u(x) \vert < \varepsilon, \quad \text{if} \ \Vert x \Vert > R_{1}.
\end{eqnarray}
\end{proof}

\section{Existence of Fundamental solutions and consequences}
\label{sec Existence of Fundamental solutions and consequences}
In this section we demonstrate Theorem \ref{existence of fundamental solutions} and point out some of its consequences.
\begin{proof}[Proof of Theorem \ref{existence of fundamental solutions}]

Let $(\gamma_{k})$ be a sequence in $(0, \gamma^{*})$ which converges to $\gamma^{\ast}$. By Proposition \ref{main supersolution prop} there is $w_{k} \in \mathcal{A}^{+}_{\gamma_{k}}$ such that
\begin{eqnarray}
\Delta^{\beta, \alpha} w_{k}(x) = \dfrac{1}{\Vert x \Vert_{\tilde{\beta}}^{\left( \tilde{\gamma}_{k} + 2\tilde{\alpha} \right)}}, \quad \text{for all} \ x \in \mathbb{R}^{n} \setminus \left\lbrace 0 \right\rbrace.
\end{eqnarray}
Consider the sequence
\begin{eqnarray}
\tilde{w}_{k} = \dfrac{w_{k}}{\Vert w_{k} \Vert_{L^{\infty}(\partial \Theta_{1})}}.
\end{eqnarray}
Note that $\tilde{w}_{k}$ satisfies
\begin{eqnarray}
\label{convergence condition of tilde{w}k}
\Vert \tilde{w}_{k} \Vert_{L^{\infty}(\partial \Theta_{1})} = 1 \quad \text{and} \quad \Delta^{\beta, \alpha} \tilde{w}_{k} (x) = f_{k}(x),
\end{eqnarray}
for all $x \in \mathbb{R}^{n} \setminus \left\lbrace 0 \right\rbrace$, where
\begin{eqnarray}
f_{k}(x) = \dfrac{1}{\Vert w_{k} \Vert_{L^{\infty}(\partial \Theta_{1})}} \left(\dfrac{1}{\Vert x \Vert_{\tilde{\beta}}^{\left( \tilde{\gamma}_{k} + 2\tilde{\alpha} \right)}}\right).
\end{eqnarray}
From Lemma \ref{C-regularity in general} and \eqref{convergence condition of tilde{w}k}, $\tilde{w}_{k}$ converges locally uniformly (up to a subsequence) to a non-negative function $\Psi_{\beta, \alpha}\in C^{0}\left( \mathbb{R}^{n} \setminus \left\lbrace 0 \right\rbrace \right)$. Furthermore, from Remark \ref{conseq. of main supersolution}, $f_{k}$ converges locally uniformly to $0$ in $\mathbb{R}^{n} \setminus \left\lbrace 0 \right\rbrace$. Hence, by Lemma \ref{stability prop}, $\Psi_{\beta, \alpha}$ satisfies
\begin{eqnarray}
\Delta^{\beta, \alpha} \Psi_{\beta, \alpha} = 0, \quad \text{in} \ \mathbb{R}^{n} \setminus \left\lbrace 0 \right\rbrace.
\end{eqnarray}
Given $x\neq 0$ and $r >0$ we have
\begin{eqnarray}
\label{tildewk is Agamma}
\tilde{w}_{k}(x) = r^{\gamma_{k}} \tilde{w}_{k}\left(T_{\beta,r}x \right).
\end{eqnarray}
Letting $k\rightarrow + \infty$ in \eqref{tildewk is Agamma} we conclude that $\Psi_{\beta, \alpha} \in \mathcal{A}_{\gamma^{\ast}}$. From \eqref{convergence condition of tilde{w}k}, we deduced that
$$
\displaystyle\sup_{\partial \Theta_{1}}\Psi_{\beta, \alpha} = 1,
$$
since $\Psi_{\beta, \alpha}, \tilde{w}_{k}$ are continuous in $\partial \Theta_{1}$ and $\tilde{w}_{k}$ converges uniformly to $\Psi_{\beta, \alpha}$ in $\partial \Theta_{1}$.
To prove the uniqueness, note that, from Lemma \ref{strong max principle}, we reach $\Psi_{\beta, \alpha} \in \mathcal{A}^{+}_{\gamma^{\ast}}$. Thus, by Lemma \ref{barrier lemma corollary 1}, if $u \in \mathcal{A}^{+}_{\gamma}$ is a harmonic function, we find $\gamma = \gamma^{\ast}$. Finally, from Lemma \ref{strong maximum principle for homogeneous function}, we get
$$
u = \mu_{0}  \Psi_{\beta, \alpha},
$$
for some constant $\mu_{0} >0$ depending on $\beta$, $\alpha$ and $n$.
\end{proof}
In the following corollary, we prove that the fundamental solution $\Psi_{\beta, \alpha}$ is comparable to $\Vert x \Vert^{-\gamma^{\ast}}$ except at origin. We also get $\Psi_{\beta, \alpha} \in C^{\frac{2}{b_{\max}}}_{\Vert \cdot \Vert}\left(\mathbb{R}^{n} \setminus \left\lbrace 0 \right\rbrace  \right)$.

\begin{corollary}
\label{bounds and regularity of fund sol}
Let $ \alpha_{0} < \alpha < 2/b_{\max}$. If $\Psi_{\beta, \alpha}$ is as in Theorem \ref{existence of fundamental solutions}, then $\Psi_{\beta, \alpha}$ satisfies:
\begin{enumerate}
\item There is a constant $m_{0}>0$ depending on $\beta$, $\alpha$, and $n$ such that
\begin{eqnarray}
\label{bounds from below and from above}
\dfrac{m_{0}}{\Vert x \Vert^{\gamma^{\ast}}} \leq \Psi_{\beta, \alpha}(x) \leq \dfrac{1}{\Vert x \Vert^{\gamma^{\ast}}}, \quad \text{for all} \ x \in \mathbb{R}^{n} \setminus \left\lbrace 0 \right\rbrace.
\end{eqnarray}
\item $\Psi_{\beta, \alpha} \in L^{1}\left( \mathbb{R}^{n}, w \right) \cap C^{\frac{2}{b_{\max}}}_{\Vert \cdot \Vert}\left(\mathbb{R}^{n} \setminus \left\lbrace 0 \right\rbrace  \right)$.
\end{enumerate}
\end{corollary}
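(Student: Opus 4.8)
The plan is to read off the two-sided bound in \eqref{bounds from below and from above} from the $-\gamma^\ast$-homogeneity of $\Psi_{\beta,\alpha}$ together with the normalization $\sup_{\partial\Theta_1}\Psi_{\beta,\alpha}=1$, then to deduce $\Psi_{\beta,\alpha}\in L^1(\mathbb R^n,w)$ from that bound and the strict inequality $\gamma^\ast<c$, and finally to obtain the $C^{2/b_{\max}}_{\Vert\cdot\Vert}$-regularity from the Schauder estimate already available for $\Delta^{\beta,\alpha}$, the endpoint exponent being the delicate point. For assertion~1, fix $z\neq 0$, put $r=\Vert z\Vert_\beta$, and note by the scaling properties (Lemma~\ref{scaling properties lemma}) that $\omega:=T^{-1}_{\beta,r}z\in\partial\Theta_1$, since $\Vert T^{-1}_{\beta,r}z\Vert_\beta=r^{-1}\Vert z\Vert_\beta=1$. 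As $\Psi_{\beta,\alpha}\in\mathcal A_{\gamma^\ast}$ we get $\Psi_{\beta,\alpha}(z)=\Vert z\Vert_\beta^{-\gamma^\ast}\Psi_{\beta,\alpha}(\omega)$, so the upper bound follows at once from $\sup_{\partial\Theta_1}\Psi_{\beta,\alpha}=1$, while the lower bound reduces to $m_0:=\inf_{\partial\Theta_1}\Psi_{\beta,\alpha}>0$; this holds because $\Psi_{\beta,\alpha}$ is continuous and strictly positive on the compact set $\partial\Theta_1\subset\mathbb R^n\setminus\{0\}$. (Alternatively, a Harnack chain obtained by applying Theorem~\ref{Harnack Inequality} on finitely many Euclidean balls covering $\partial\Theta_1$ and contained in $\mathbb R^n\setminus\{0\}$ yields $\sup_{\partial\Theta_1}\Psi_{\beta,\alpha}\le C\inf_{\partial\Theta_1}\Psi_{\beta,\alpha}$, making the dependence of $m_0$ on $\beta,\alpha,n$ explicit.)

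For the $L^1(\mathbb R^n,w)$-part I would split $\int_{\mathbb R^n}\Psi_{\beta,\alpha}\,w\,dx$ over $\Theta_1$ and $\mathbb R^n\setminus\Theta_1$ and pass to anisotropic polar coordinates exactly as in the proof of Lemma~\ref{scaling properties lemma}. On $\Theta_1$ one has $w\le 1$ and, by \eqref{bounds from below and from above}, $\Psi_{\beta,\alpha}(x)\le\Vert x\Vert_\beta^{-\gamma^\ast}$, so the contribution is bounded by $|\partial\Theta_1|\int_0^1 t^{\,c-\gamma^\ast-1}\,dt$, which is finite precisely because $\gamma^\ast<c$ by Lemma~\ref{fund. sol. lemma 1}. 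On $\mathbb R^n\setminus\Theta_1$ one has $1+\Vert x\Vert_\beta^{c+2\alpha}\ge\Vert x\Vert_\beta^{c+2\alpha}$, hence $w(x)\le\Vert x\Vert_\beta^{-c-2\alpha}$, and again $\Psi_{\beta,\alpha}(x)\le\Vert x\Vert_\beta^{-\gamma^\ast}$, so the contribution is bounded by $|\partial\Theta_1|\int_1^\infty t^{\,-\gamma^\ast-2\alpha-1}\,dt<\infty$ since $\gamma^\ast+2\alpha>0$. This proves $\Psi_{\beta,\alpha}\in L^1(\mathbb R^n,w)$.

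For the regularity, recall that by the construction in Theorem~\ref{existence of fundamental solutions} the function $\Psi_{\beta,\alpha}\in C^0(\mathbb R^n\setminus\{0\})\cap L^1(\mathbb R^n,w)$ is a locally uniform limit of strong solutions, and $\Delta^{\beta,\alpha}\Psi_{\beta,\alpha}=0$ in $\mathbb R^n\setminus\{0\}$. Since $\Delta^{\beta,\alpha}$ is translation invariant, the Schauder estimate of Lemma~\ref{C-regularity in general}, in the form of Remark~\ref{C regilarity original norm remark}, applies on balls $\Theta^{\mu\beta}_R(x_0)\subset\mathbb R^n\setminus\{0\}$ with vanishing right-hand side and gives $\Psi_{\beta,\alpha}\in C^\theta_{\Vert\cdot\Vert}$ near every $x_0\neq0$ for all $\theta<\min\{2/b_{\max},2\alpha\}$; for $\alpha$ close to $2/b_{\max}$ (in particular after enlarging $\alpha_0$ if necessary to exceed $1/b_{\max}$) this range is $\theta<2/b_{\max}$. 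Because the right-hand side is identically zero, one then iterates the estimate to raise the Hölder exponent; note that $2/b_{\max}$ is the saturation threshold of the scale $C^\gamma_{\Vert\cdot\Vert}$, and in fact it suffices to show $\Psi_{\beta,\alpha}$ is locally Lipschitz away from the origin, because $\Vert h\Vert_\beta^{2/b_{\max}}\ge n^{-1/2}|h|$ whenever $|h_i|\le1$ for all $i$. The main obstacle is precisely this last step: crossing the first-order Schauder ceiling to reach the endpoint exponent $2/b_{\max}$. This is where the limited Euclidean regularity of $x\mapsto\Vert x\Vert_\beta$ near the coordinate hyperplanes enters, and one closes it either by bootstrapping a higher-order estimate for $\Delta^{\beta,\alpha}$ (legitimate since the data $0$ is smooth) or by exploiting the representation $\Psi_{\beta,\alpha}(x)=\Vert x\Vert_\beta^{-\gamma^\ast}\Psi_{\beta,\alpha}\!\left(T^{-1}_{\beta,\Vert x\Vert_\beta}x\right)$ together with the regularity of $\Psi_{\beta,\alpha}$ on $\partial\Theta_1$ provided by Lemma~\ref{C-regularity in general}.
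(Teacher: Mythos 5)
Your proof of assertion 1 is essentially the paper's: the two-sided bound is read off from the $-\gamma^\ast$-homogeneity, and the paper establishes $m_0>0$ directly via the Harnack inequality (which you mention only as a parenthetical alternative; it is the argument that makes the dependence of $m_0$ on $\beta,\alpha,n$ transparent, since the Harnack constant is universal). Your $L^1(\mathbb R^n,w)$ computation via anisotropic polar coordinates is correct and in fact more explicit than the paper's one-line appeal to homogeneity; the exponents $c-\gamma^\ast>0$ near the origin and $\gamma^\ast+2\alpha>0$ at infinity are exactly what is needed.

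The genuine gap is in the endpoint regularity $\Psi_{\beta,\alpha}\in C^{2/b_{\max}}_{\Vert\cdot\Vert}(\mathbb R^n\setminus\{0\})$. You correctly observe that the Schauder estimate of Lemma~\ref{C-regularity in general} (via Remark~\ref{C regilarity original norm remark}) only yields $C^\theta_{\Vert\cdot\Vert}$ for $\theta<\min\{2/b_{\max},2\alpha\}$, strictly below the target exponent, and you correctly identify that some bootstrap is required. But you do not actually carry it out. Saying ``one then iterates the estimate to raise the Hölder exponent'' is misleading: re-applying the Schauder estimate to $\Psi_{\beta,\alpha}$ itself does not push $\theta$ past the ceiling, because the estimate produces the same interior exponent regardless of the input regularity. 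What is needed is to apply the Schauder estimate to quantities whose regularity \emph{does} improve as the input regularity improves. The paper does exactly this: it introduces a cutoff $G$ of $\Psi_{\beta,\alpha}$ away from the origin and the anisotropic difference quotients $\Delta_{r,\nu}G(x)=r^{-2m/(kb_{\max})}(G(x+T_{\beta,r}\nu)-G(x))$, shows by a measure estimate on symmetric differences of the excluded sets $M(x)$ (together with the induction hypothesis $\Psi_{\beta,\alpha}\in C^{2m/(kb_{\max})}_{\Vert\cdot\Vert}(\partial\Theta_1)$ and the bound from assertion~1) that $\Delta^{\beta,\alpha}[\Delta_{r,\nu}G]$ is uniformly bounded near $\partial\Theta_1$, applies Lemma~\ref{C-regularity in general} to $\Delta_{r,\nu}G$ to get a uniform $C^{2/(kb_{\max})}_{\Vert\cdot\Vert}$ estimate on the difference quotient, and then lets $r\to0$ to conclude $\Psi_{\beta,\alpha}\in C^{2(m+1)/(kb_{\max})}_{\Vert\cdot\Vert}(\partial\Theta_1)$. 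Iterating this $k$ times reaches the endpoint. Your alternative of proving local Lipschitz regularity and invoking $\Vert h\Vert_\beta^{2/b_{\max}}\gtrsim|h|$ is a logically stronger goal, not an easier one, and is not established either. So as written the third claim is only sketched, not proved.
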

\begin{proof}
Since $\Psi_{\beta, \alpha}$ is $-\gamma^{\ast}$-homogeneous we get
\begin{eqnarray}
\label{bounds and regularity of fund sol 1}
\Psi_{\beta, \alpha}(x)= \Psi_{\beta, \alpha}(T_{\beta, \Vert x \Vert^{-1}}x) \Vert x \Vert^{-\gamma^{\ast}}, \quad \text{if} \ x \neq 0.
\end{eqnarray}
On the other hand, by Theorem \ref{Harnack Inequality}, there exists a constant $m_{0}>0$ depending on $\beta$, $\alpha$, and $n$ such that
\begin{eqnarray}
\label{bounds and regularity of fund sol 2}
\displaystyle\inf_{\partial \Theta_{1}}\Psi_{\beta, \alpha}\geq m_{0} \, \displaystyle\sup_{\partial \Theta_{1}}\Psi_{\beta, \alpha} = m_{0}.
\end{eqnarray}
We then combine \eqref{bounds and regularity of fund sol 1} and \eqref{bounds and regularity of fund sol 2} to find \eqref{bounds from below and from above}, since $\displaystyle\sup_{\partial \Theta_{1}}\Psi_{\beta, \alpha} =1$. To prove the second part of theorem, note that the $-\gamma^{\ast}$-homogeneity of $\Psi_{\beta, \alpha}$ implies that $\Psi_{\beta, \alpha} \in L^{1}\left( \mathbb{R}^{n}, w \right)$. To access $C^{\frac{2}{b_{\max}}}_{\Vert \cdot \Vert}$-regularity for $\Psi_{\beta, \alpha}$ it is sufficient to prove that
\begin{eqnarray}
\Psi_{\beta, \alpha} \in C^{\frac{2}{b_{\max}}}_{\Vert \cdot \Vert}\left( \partial \Theta_{1} \right).
\end{eqnarray}
As in \cite{CWW} we use induction on $k$. By Lemma \ref{C-regularity in general} and Remark \ref{C regilarity original norm remark} we can choose $k \in \mathbb{N}$ such that
\begin{eqnarray}
\dfrac{1}{k} < \min \left\lbrace 2/b_{\max}, \alpha \right\rbrace \quad \text{and} \quad \Psi_{\beta, \alpha} \in C^{\frac{1}{k}\frac{2}{  b_{\max}}}_{\Vert \cdot \Vert}\left( \partial \Theta_{1} \right).
\end{eqnarray}
Assume that
\begin{eqnarray}
\label{induction hypothesis}
\Psi_{\beta, \alpha} \in C^{\frac{2m}{kb_{\max}}}_{\Vert \cdot \Vert}\left( \partial \Theta_{1} \right), \quad \text{for} \ m=1, \dots, k-1.
\end{eqnarray}
Let $G: \mathbb{R}^{n} \rightarrow \mathbb{R}^{n}$ defined by
\begin{eqnarray}
G(x)= \left \{
\begin{array}{ll}
\Psi_{\beta, \alpha}(x), \ \text{ if } \ \mathbb{R}^{n} \setminus E_{\frac{1}{C}}, \\
0, \ \text{ if } \ E_{\frac{1}{C}}, \\
\end{array}
\right.
\end{eqnarray}
where $C>0$ is a constant such that $E_{\frac{1}{16C}} \subset \Theta_{1}$. Given $0 < r < 1$ and $\nu \in \partial \Theta_{1}$, from \eqref{induction hypothesis}, there exists a constant $C_{1} > 0$ which does not depend on $r$ and $\nu$ such that
\begin{eqnarray}
\label{induction hypothesis holder regularity}
\left | \Delta_{r,\nu}G \right | \leq C_{1}, \quad \text{in} \ \mathbb{R}^{n} \setminus E_{\frac{1}{C}},
\end{eqnarray}
where $\Delta_{r,\nu}G$ is the anisotropic difference quotient defined by
\begin{eqnarray}
\Delta_{r,\nu}G(x): = \dfrac{G(x + T_{\beta, r}\nu) - G(x)}{r^{\frac{2m}{kb_{\max}}}}.
\end{eqnarray}
Let $x_{0} \in \partial \Theta_{1}$ and $C_{2}>0$ such that
\begin{eqnarray}
E_{\frac{1}{C}}(-x) \cap E_{\frac{1}{C}} = \emptyset \quad \text{for all} \ x \in \Theta_{\frac{1}{C_{2}}}(x_{0}).
\end{eqnarray}
Now we prove $\Delta^{\beta, \alpha}\left[\Delta_{r,\nu}G \right]$ is bounded in $\Theta_{\frac{1}{C_{2}}}(x_{0})$. Indeed, since $\Psi_{\beta, \alpha}(x)$ is harmonic in $\mathbb{R}^{n} \setminus \left\lbrace 0 \right\rbrace $ and
$$
\displaystyle \int_{E_{\frac{1}{C}}(-x)} \dfrac{1}{\Vert y \Vert^{c + 2\alpha}} dy < + \infty, \quad \text{for} \ x \in \Theta_{\frac{1}{C_{2}}}(x_{0}),
$$
we get
\begin{eqnarray}
\Delta^{\beta, \alpha} G(x) = \displaystyle \int_{M(x)} \dfrac{\Psi_{\beta, \alpha}(x+y)}{\Vert y \Vert^{c + 2\alpha}} dy \quad \text{in} \ \Theta_{\frac{1}{C_{2}}}(x_{0}),
\end{eqnarray}
where we have denote by $M(x) = \mathbb{R}^{n} \setminus E_{\frac{1}{C}}(-x)$. From \eqref{bounds from below and from above} it follows that
\begin{eqnarray}
\label{boundness of fundamental solution}
\Psi_{\beta, \alpha} \leq C_{3}, \quad \text{in} \ \mathbb{R}^{n} \setminus E_{\frac{1}{C}}
\end{eqnarray}
for some constant $C_{3}> 0$ depending on $\beta$, $\alpha$ and $n$. Thus, for $x \in \Theta_{\frac{1}{C_{2}}}(x_{0})$, we find
\begin{eqnarray}
\Delta^{\beta, \alpha}\left[\Delta_{r,\nu}G\right](x) & = & \dfrac{1}{r^{\frac{2m}{kb_{\max}}}}\int_{M(x) \cap M(x^{}_{r, \nu_{\beta}})} \dfrac{\Psi_{\beta, \alpha}(x + y + T_{\beta, r}\nu) - \Psi_{\beta, \alpha}(x + y)}{\Vert y \Vert^{c + 2\alpha}} dy \\ \nonumber
& + & \dfrac{1}{r^{\frac{2m}{kb_{\max}}}}\int_{M(x) \cap (\mathbb{R}^{n} \setminus M(x^{}_{r, \nu_{\beta}}))}  \dfrac{\Psi_{\beta, \alpha}(x + y + T_{\beta, r}\nu)}{\Vert y \Vert^{c + 2\alpha}} dy \\ \nonumber
& - & \dfrac{1}{r^{\frac{2m}{kb_{\max}}}} \int_{M(x^{}_{r, \nu_{\beta}})\cap (\mathbb{R}^{n} \setminus M(x))} \dfrac{\Psi_{\beta, \alpha}(x + y)}{\Vert y \Vert^{c + 2\alpha}} dy,
\end{eqnarray}
where $x^{}_{r, \nu_{\beta}} = x + T_{\beta, r}\nu$. Moreover, there exists a constant $C_{4} >0$ such that
\begin{eqnarray}
\label{boundness of M(x) minus M(x+y)}
\max \left\lbrace \left | M(x) \cap (\mathbb{R}^{n} \setminus M(x^{}_{r, \nu_{\beta}}))\right |, \left | M(x^{}_{r, \nu_{\beta}})\cap (\mathbb{R}^{n} \setminus M(x))\right | \right\rbrace \leq C_{4}r^{2/b_{\max}}.
\end{eqnarray}
Combining \eqref{boundness of fundamental solution}, \eqref{induction hypothesis holder regularity}, and \eqref{boundness of M(x) minus M(x+y)} we get
\begin{eqnarray}
\label{boundness of the difference quotient}
\left | \Delta^{\beta, \alpha}\left[\Delta_{r,\nu}G \right](x) \right| \leq C_{5} \quad \text{in} \ \Theta_{\frac{1}{C_{2}}}(x_{0}).
\end{eqnarray}
Then, by Lemma \ref{C-regularity in general}, we deduce that
\begin{eqnarray}
\label{1/k holder of the difference quotient}
\Delta^{\beta, \alpha}\left[\Delta_{r,\nu}G \right] \in   C^{\frac{2}{kb_{\max}}}_{\Vert \cdot \Vert}\left(\Theta_{\frac{1}{2C_{2}}}(x_{0}) \right).
\end{eqnarray}
Letting $r \rightarrow 0$ in \eqref{1/k holder of the difference quotient} we reach
$$
\Psi_{\beta, \alpha} \in C^{\frac{2(m+1)}{kb_{\max}}}_{\Vert \cdot \Vert}\left( \partial \Theta_{1} \right)
$$
and the corollary is proved.
\end{proof}

Finally, we consider the anisotropic versions of the arguments used in \cite{CWW}, see Section 5 in \cite{CWW}, to get a Liouville-type result and, as a consequence, we obtain a characterization of singularity isolated of a viscosity solution of $\Delta^{\beta, \alpha} u = 0$ in $\mathbb{R}^{n} \setminus \left\lbrace 0 \right\rbrace$. Indeed, we get:
\begin{corollary}[Liouville and characterization of singularities] Suppose $ \alpha_{0} < \alpha < 2/b_{\max}$. Then the following assertions hold:
\begin{enumerate}
\item If $u$ is a bounded viscosity solution to the equation $\Delta^{\beta, \alpha}u = 0$ in $\mathbb{R}^{n}$ and satisfies
$$
\displaystyle \liminf_{r \rightarrow 0} \left( \inf_{\partial \Theta_{r}}\dfrac{u}{\Psi_{\beta, \alpha}} \right) = 0,
$$
then $u$ is constant.
\item Assume that $u$ satisfies all hypotheses of Theorem \ref{existence of fundamental solutions}. If $u$ is  bounded from above or below in $\Theta_{1} \setminus \left\lbrace 0 \right\rbrace $ and $u \in L^{\infty}\left( \mathbb{R}^{n} \setminus \Theta_{1} \right)$, then $u = \kappa_{1}$, or $u = \kappa_{2} \Psi_{\beta, \alpha} + \kappa_{1}$ for some $\kappa_{1}, \kappa_{2} \in \mathbb{R}$.
\end{enumerate}

\end{corollary}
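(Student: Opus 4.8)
The plan is to follow the line of \cite{CWW} (Section 5): assertion (1) is the heart of the matter, and assertion (2) is a B\^ocher-type consequence obtained by subtracting the correct multiple of the fundamental solution $\Psi_{\beta,\alpha}$ from Theorem \ref{existence of fundamental solutions}. The tools I would use are the Harnack inequality (Theorem \ref{Harnack Inequality}); the translation invariance and the $T_{\beta,r}$-scaling invariance of $\Delta^{\beta,\alpha}$, together with the identities of Lemma \ref{scaling properties lemma}; the maximum and comparison principles (Lemmas \ref{Comp. princ.}, \ref{max principle}, \ref{strong max principle}); the stability property (Lemma \ref{stability prop}); the interior $C^{\theta}_{\Vert\cdot\Vert}$-estimates of Lemma \ref{C-regularity in general}, in the form of Remark \ref{C regilarity original norm remark}; and the two-sided bound $m_{0}\Vert x\Vert^{-\gamma^{\ast}}\le\Psi_{\beta,\alpha}(x)\le\Vert x\Vert^{-\gamma^{\ast}}$ together with $\Psi_{\beta,\alpha}\in L^{1}(\mathbb{R}^{n},w)$ from Corollary \ref{bounds and regularity of fund sol}.

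For (1), set $v:=\sup_{\mathbb{R}^{n}}u-u\ge0$; by linearity $v$ is a non-negative viscosity solution of $\Delta^{\beta,\alpha}v=0$ in $\mathbb{R}^{n}$ with $\inf_{\mathbb{R}^{n}}v=0$, and $v$ is bounded, hence in $L^{1}(\mathbb{R}^{n},w)$. I would first promote the Harnack inequality to a scale- and translation-uniform estimate: for $z_{0}\in\mathbb{R}^{n}$ and $R>0$ the affine map $x\mapsto T_{\beta,R}x+z_{0}$ carries $\Theta_{2}$ onto $\Theta_{2R}(z_{0})$ and $x\mapsto v(T_{\beta,R}x+z_{0})$ is again a non-negative viscosity solution in $\Theta_{2}$, so Theorem \ref{Harnack Inequality} gives $\sup_{\Theta_{R/2}(z_{0})}v\le C\,v(z_{0})$ with $C=C(\beta,n,\alpha)$ independent of $z_{0}$ and $R$. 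Given $p,q\in\mathbb{R}^{n}$, choosing $R>2\Vert q-p\Vert$ yields $q\in\Theta_{R/2}(p)$, whence $v(q)\le C v(p)$, and by symmetry $v(p)\le C v(q)$; therefore $\sup_{\mathbb{R}^{n}}v\le C\inf_{\mathbb{R}^{n}}v=0$, so $v\equiv0$ and $u$ is constant. (When $u$ is bounded on $\mathbb{R}^{n}$ the hypothesis $\liminf_{r\to0}\inf_{\partial\Theta_{r}}(u/\Psi_{\beta,\alpha})=0$ is automatic, since $\Psi_{\beta,\alpha}\to+\infty$ at the origin; if the statement is read with $u$ only a solution in $\mathbb{R}^{n}\setminus\{0\}$, that condition is precisely what the removability step below supplies before this argument is applied.)

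For (2), using that $u$ is bounded from above or from below in $\Theta_{1}\setminus\{0\}$, a chained form of the Harnack inequality, and the bounds of Corollary \ref{bounds and regularity of fund sol}, one shows that the quotient $u/\Psi_{\beta,\alpha}$ admits a finite limit $\kappa_{2}$ along $\partial\Theta_{r}$ as $r\to0$. I would then put $w:=u-\kappa_{2}\Psi_{\beta,\alpha}$, which solves $\Delta^{\beta,\alpha}w=0$ in $\mathbb{R}^{n}\setminus\{0\}$, belongs to $L^{1}(\mathbb{R}^{n},w)$, is bounded near the origin by construction, and is bounded on $\mathbb{R}^{n}\setminus\Theta_{1}$ because $u\in L^{\infty}(\mathbb{R}^{n}\setminus\Theta_{1})$ and $\Psi_{\beta,\alpha}\to0$ at infinity. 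The crucial claim is that the singularity of $w$ at $0$ is removable, i.e. $w$ agrees on $\Theta_{1}\setminus\{0\}$ with the solution $h$ of $\Delta^{\beta,\alpha}h=0$ in $\Theta_{1}$ with exterior datum $h=w$ on $\mathbb{R}^{n}\setminus\Theta_{1}$; this follows by comparing $w-h$ with $\pm\delta\Psi_{\beta,\alpha}$ on $\Theta_{1}\setminus\{0\}$ (on $\mathbb{R}^{n}\setminus\Theta_{1}$ one has $w-h=0$, and near $0$ one has $w-h\mp\delta\Psi_{\beta,\alpha}\to\mp\infty$), invoking Lemma \ref{max principle}, and letting $\delta\downarrow0$. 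Granting this, $w$ extends to a bounded entire viscosity solution, so by (1) $w\equiv\kappa_{1}$ for a constant $\kappa_{1}$; that is, $u=\kappa_{1}$ when $\kappa_{2}=0$, and $u=\kappa_{2}\Psi_{\beta,\alpha}+\kappa_{1}$ otherwise.

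The main obstacle is this removability step: since $\Delta^{\beta,\alpha}$ is nonlocal, the value $\Delta^{\beta,\alpha}w(x)$ depends on $w$ over all of $\mathbb{R}^{n}$, so the classical B\^ocher argument cannot simply be localized; the barriers $\delta\Psi_{\beta,\alpha}$, $\delta\downarrow0$, the sharp two-sided bounds of Corollary \ref{bounds and regularity of fund sol} (in particular $\Psi_{\beta,\alpha}\in L^{1}(\mathbb{R}^{n},w)$, which makes the relevant integrals converge), and the punctured-domain maximum principle of Lemma \ref{max principle} are exactly the ingredients that make it go through. A secondary, routine technical point is bookkeeping with the quasi-triangle constant $C_{0}$ fixed before Lemma \ref{Bar. func.}, since every ball here is an anisotropic quasi-ball: this enters when chaining Harnack between $p$ and $q$ in (1) and when comparing translated balls $\Theta_{r}(z)$ with $\Theta_{r}$, but is controlled by Lemma \ref{scaling properties lemma} and assertion 1 of Lemma \ref{elemntary results}.
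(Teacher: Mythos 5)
The paper itself offers no proof here beyond the sentence ``we consider the anisotropic versions of the arguments used in [CWW, Section 5],'' so your proposal is necessarily filling in content the paper only points to. Your overall plan --- Harnack for (1), subtract a multiple of $\Psi_{\beta,\alpha}$ plus a removability argument for (2) --- is indeed the CWW line, and your inventory of the tools that should carry it over to the anisotropic setting is correct. But there are three gaps, two of them at the heart of the argument.

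First, in (1) your Harnack-chaining proof never uses the hypothesis $\liminf_{r\to0}\bigl(\inf_{\partial\Theta_r}u/\Psi_{\beta,\alpha}\bigr)=0$. You observe, correctly, that if $u$ is genuinely bounded on all of $\mathbb{R}^n$ this condition is automatic (since $\Psi_{\beta,\alpha}\to+\infty$ at the origin), and suggest the intended domain is $\mathbb{R}^n\setminus\{0\}$. But you then leave the issue hanging. If the domain really is $\mathbb{R}^n\setminus\{0\}$, your Harnack chain fails: to compare $v(p)$ and $v(q)$ you take $R>2\Vert q-p\Vert$, and for points with $\Vert p\Vert$ small the scaled ball $\Theta_{2R}(p)$ contains the origin, so Theorem~\ref{Harnack Inequality} cannot be applied to $x\mapsto v(T_{\beta,R}x+p)$ there. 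You would first need to remove the singularity at $0$ --- and it is precisely the $\liminf$ hypothesis that makes that removal possible. As written, (1) either proves a weaker statement (bounded entire solutions are constant, no $\liminf$ needed) or silently assumes the removability it cannot yet supply.

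Second, in (2) the decisive claim ``a chained form of the Harnack inequality and the bounds of Corollary~\ref{bounds and regularity of fund sol} show that $u/\Psi_{\beta,\alpha}$ admits a finite limit $\kappa_2$ along $\partial\Theta_r$'' is stated but not argued, and it is the B\^ocher core of the whole corollary. Moreover, even granting the existence of the limit, $u/\Psi_{\beta,\alpha}\to\kappa_2$ gives only $u-\kappa_2\Psi_{\beta,\alpha}=o(\Psi_{\beta,\alpha})$ near the origin, not boundedness of $w=u-\kappa_2\Psi_{\beta,\alpha}$ --- yet your removability step requires $w$ bounded near $0$ to make $w-h\mp\delta\Psi_{\beta,\alpha}\to\mp\infty$. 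In the CWW argument this is handled by a sliding construction: one sets $\kappa_2$ to be the infimum of constants $c$ for which $u-c\Psi_{\beta,\alpha}$ is bounded below near $0$ (or the analogous $\limsup$/$\liminf$ device), establishes boundedness from one side first, and only then obtains the two-sided bound. That intermediate step is missing here.

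Third, a smaller point: for the removability comparison you invoke Lemma~\ref{max principle}, which is a global maximum principle on $\mathbb{R}^n\setminus\{0\}$ and requires $\Delta^{\beta,\alpha}(w-h\mp\delta\Psi_{\beta,\alpha})\le0$ on all of $\mathbb{R}^n\setminus\{0\}$; but $h$ is only a solution inside $\Theta_1$, so that differential inequality fails outside $\Theta_1$. The correct tool is the bounded-domain comparison principle, Lemma~\ref{Comp. princ.}, applied on the annulus $\Theta_1\setminus\overline{\Theta_\epsilon}$ with $\epsilon\downarrow0$, checking the exterior ordering on $\mathbb{R}^n\setminus\Theta_1$ (where $w=h$) and on $\overline{\Theta_\epsilon}$ (where $\delta\Psi_{\beta,\alpha}$ dominates once $w$ is known bounded near $0$) before sending $\delta\downarrow0$.

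In short: the skeleton is the right one and matches what the paper intends, but the removability/limit-extraction step in (2) --- the actual mathematical content --- is assumed rather than proved, and in (1) the stated hypothesis is left unused in a way that signals the domain/removability issue has not been resolved.
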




\end{document}